\documentclass[11pt]{amsart}
\usepackage[utf8]{inputenc}
\usepackage{amsfonts}
\usepackage{mathtools}
\usepackage{amsthm}
\usepackage{hyperref}
\usepackage{xcolor}
\usepackage{blkarray}
\usepackage{csquotes}
\usepackage{lineno}
\theoremstyle{plain}
\newtheorem{theorem}{Theorem}[section]

\newtheorem{lemma}{Lemma}[section]
\newtheorem{proposition}{Proposition}[section]

\newtheorem{remark}{Remark}[section]

\newtheorem{definition}{Definition}[section]
\newcommand{\rstr}[2]{\left. #1 \right\rvert_{#2}}

\newcommand{\abs}[1]{\left\lvert#1\right\rvert}
\newcommand{\CAT}{\mathrm{CAT}(-1)}
\DeclareMathOperator{\lex}{ShortLex}

\title{Orbital Counting in Conjugacy Classes}
\author{Alexander Baumgartner and Mark Pollicott}
\date{\today}

\newcommand{\Addresses}{{% additional braces for segregating \footnotesize
  \bigskip
  \footnotesize

  A.~Baumgartner, \textsc{SNS, Piazza dei Cavalieri, 7 - 56126 Pisa}\par\nopagebreak
  \textit{E-mail address}, A.~Baumgartner: \texttt{alexander.baumgartner@sns.it}

  \medskip

  M.~Pollicott, \textsc{Department of Mathematics, Warwick University, Coventry, CV4 7AL}\par\nopagebreak
  \textit{E-mail address}, M.~Pollicott: \texttt{masdbl@warwick.ac.uk}

  \medskip

}}

\usepackage[
  backend=biber,
  style=alphabetic,
  sorting=nyt,
  sortcites=true,
  maxbibnames=99,
  maxcitenames=4,
  maxalphanames=4,
  minalphanames=3,
  giveninits=true,
  doi=false,
  isbn=false,
  url=false,
  eprint=true,
  backref=false
]{biblatex}
\renewbibmacro{in:}{%
  \ifentrytype{article}
    {}
    {\bibstring{in}%
     \printunit{\intitlepunct}}}

\begin{document}
\begin{abstract}
In this article we consider a restricted  orbital counting problem for the action of certain discrete groups on suitable spaces. In particular, we present asymptotics for counting those points in an orbit restricted to  a single conjugacy class.  A classical example would be cocompact actions of a discrete group acting isometrically on a simply connected manifold with pinched negative curvature. More generally, we obtain results for convex cocompact actions on $CAT(-1)$ spaces.   

\end{abstract}
\maketitle
\section{Introduction}
In this note we will consider the asymptotics of 
restricted orbital counting  for orbits of groups acting on suitable spaces.
To  illustrate  this, consider the original  case where $X$ is a complete simply connected Riemannian manifold with pinched sectional curvature, and let $\Gamma$ be a discrete group acting isometrically on $X$.
A classical problem, which is the hyperbolic analogue of the Gauss circle problem, is the study of asymptotics of the function
\begin{equation}\label{equation: orbit counting}
    \# \{g\in\Gamma: d(x_0,g \cdot x_0)\leq T\}
\end{equation}
for some fixed $x_0\in X$ as $T\to \infty$. Under certain conditions on the quotient space $M:=X/\Gamma$, we have the following.
\begin{proposition}[after Huber and Margulis]\label{theorem: Margulis}
 There exists $C = C(x_0)>0$ such that 
\begin{equation}\label{equation: Margulis}
    \# \{g\in\Gamma: d(x_0,g \cdot x_0)\leq T\} \sim Ce^{\delta T}
\end{equation}
as $T \to +\infty$.
\end{proposition}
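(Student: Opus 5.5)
The plan is to follow Margulis' mixing argument. I work under the standing hypothesis that $M=X/\Gamma$ is compact, or more generally that the Bowen--Margulis measure $m_{BM}$ on the unit tangent bundle $T^1M$ is finite and mixing for the geodesic flow $\phi_t$. Here $\delta$ is the critical exponent of $\Gamma$, which in the compact case equals the topological entropy of $\phi_t$. The key tools are the Patterson--Sullivan conformal density $\{\mu_x\}_{x\in X}$ on $\partial X$ and the Hopf parametrisation of $T^1X$ by $(\xi,\eta,s)\in \partial^2X\times\mathbb{R}$. In these coordinates the lifted Bowen--Margulis measure is
\[
d\tilde m = e^{2\delta (\xi|\eta)_{x_0}}\,d\mu_{x_0}(\xi)\,d\mu_{x_0}(\eta)\,ds .
\]

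First I fix a small $\epsilon>0$ and the $\epsilon$-ball $B_\epsilon$ around $x_0$. I consider the set $V_\epsilon\subset T^1X$ of unit vectors based in $B_\epsilon$, its image in $T^1M$, and the set of vectors based in $g B_\epsilon$ for $g\in\Gamma$. Next I study the correlation
\[
\rho(t)=\sum_{g\in\Gamma}\tilde m\bigl(V_\epsilon\cap \phi_{-t}(gV_\epsilon)\bigr),
\]
which equals $m_{BM}(\bar V_\epsilon\cap\phi_{-t}\bar V_\epsilon)$ on the quotient. By mixing, $\rho(t)\to m_{BM}(\bar V_\epsilon)^2/\|m_{BM}\|$. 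On the other hand, for large $t$ a vector in $V_\epsilon$ whose time-$t$ image lies in $gV_\epsilon$ must point, up to an error $O(\epsilon)$, toward $g x_0$, with $d(x_0,gx_0)\in[t-2\epsilon,t+2\epsilon]$. A refinement is more convenient here: I thicken only in the strong stable and unstable directions, replacing balls by flow boxes, so that the errors become multiplicative factors $e^{O(\delta\epsilon)}$ rather than windows in $t$.

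Integrating the mixing asymptotic over $t\in[0,T]$ and using Patterson--Sullivan shadow estimates, I relate the measure contributed by each $g$ to $\mu_{x_0}(\text{shadow of }gB_\epsilon)\approx e^{-\delta d(x_0,gx_0)}\|\mu_{gx_0}\|$. This yields
\[
\sum_{d(x_0,gx_0)\le T} e^{-\delta d(x_0,gx_0)}\cdot(\text{bounded factor}) \sim c\,T ,
\]
or, better, equidistribution: the measure $e^{-\delta T}\sum_{d(x_0,gx_0)\le T}\mathcal{D}_{gx_0}\otimes\mathcal{D}_{g^{-1}x_0}$ converges to $\frac{\|\mu_{x_0}\|^2}{\delta\|m_{BM}\|}\,\mu_{x_0}\otimes\mu_{x_0}$ on $\overline{X}\times\overline{X}$ (Roblin's formulation). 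Evaluating this on the constant function $1$ gives the statement with $C=\|\mu_{x_0}\|^2/(\delta\|m_{BM}\|)$. Finally I let $\epsilon\to0$; the upper and lower bounds differ by $e^{O(\epsilon)}$, so the limit exists.

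The main obstacle is the geometric comparison between the dynamical sets $V_\epsilon\cap\phi_{-t}gV_\epsilon$ and the orbit-counting condition. Concretely, one must show that the Hopf-coordinate description of these sets is, up to $e^{\pm O(\epsilon)}$ factors uniformly in $t$, a product of shadows at both ends times an interval in the flow direction. I would handle this with CAT$(-1)$ comparison: geodesics from $B_\epsilon$ to $gB_\epsilon$ stay exponentially close in the middle, so Busemann cocycles differ from distances by $O(\epsilon)$. This is the step where pinched negative curvature, or CAT$(-1)$, is genuinely used. Mixing itself I would take as known (Babillot) rather than reprove.
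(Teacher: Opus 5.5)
The paper gives no proof of this proposition. It is quoted from Huber and Margulis, and in the convex cocompact $\mathrm{CAT}(-1)$ setting where it is actually used it is Roblin's counting theorem (Chapitre 4 of \cite{Roblin03}), whose proof is the Bowen--Margulis mixing argument you outline. If you simply invoke Roblin's equidistribution theorem and test it against the constant function $1$, you have done exactly what the paper does, and your constant $C$ is right. One correction: the limit of $e^{-\delta T}\sum_{d(x_0,gx_0)\le T}\mathcal{D}_{gx_0}\otimes\mathcal{D}_{g^{-1}x_0}$ is $\frac{1}{\delta\|m_{BM}\|}\mu_{x_0}\otimes\mu_{x_0}$, not $\frac{\|\mu_{x_0}\|^2}{\delta\|m_{BM}\|}\mu_{x_0}\otimes\mu_{x_0}$, whose mass would be $\|\mu_{x_0}\|^4/(\delta\|m_{BM}\|)$. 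The real problem is that the mixing computation you offer as the derivation does not reach the equidistribution statement. Your switch to ``or, better, equidistribution'' asserts exactly the step that carries the content.

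There are two concrete failures. First, integrating $\rho(t)$ itself over $[0,T]$ gives at best $\int_0^T e^{-\delta t}\,dN(t)\sim cT$, even if your bounded factor were constant. This does not determine $N(T)$: a counting function behaving like $e^{\delta T}(1+\kappa\sin T)$ with small $\kappa>0$ satisfies it. You must instead use $\int_0^T e^{\delta t}\rho(t)\,dt\sim\frac{m_{BM}(\bar V_\epsilon)^2}{\delta\|m_{BM}\|}e^{\delta T}$, in which each $g$ contributes $\mathbf{1}_{\{d(x_0,gx_0)\le T\}}$ times a weight, up to $e^{O(\epsilon)}$ and boundary terms.

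Second, that weight is not uniform in $g$, and your estimate $\mu_{x_0}(\text{shadow of }gB_\epsilon)\approx e^{-\delta d(x_0,gx_0)}\|\mu_{gx_0}\|$ is false. Conformality and equivariance give $e^{-\delta d(x_0,gx_0)+O(\epsilon)}$ times the $\mu_{x_0}$-mass of the shadow of $B_\epsilon$ seen from $g^{-1}x_0$. That is a small set whose mass depends on the direction of $g^{-1}x_0$, and it can vanish for small $\epsilon$ in the convex cocompact case, e.g.\ if $x_0$ lies off the convex hull of the limit set. The shadow lemma only gives $\asymp$, for large radii, hence only $N(T)\asymp e^{\delta T}$.

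Carried out correctly, your computation with $V_\epsilon$ gives the weight $\nu(gx_0)\nu(g^{-1}x_0)$ up to $e^{O(\epsilon)}$. Here $\nu(z)=\int_{\partial X}\ell_\epsilon(z,\xi)\,d\mu_{x_0}(\xi)$ for $z\in\overline{X}$, and $\ell_\epsilon(z,\xi)$ is the length of the geodesic from $z$ to $\xi$ inside $B_\epsilon$. Meanwhile $\tilde m(V_\epsilon)=e^{O(\epsilon)}\int\nu\,d\mu_{x_0}$. So what you actually prove is the weighted count $\sum_{d(x_0,gx_0)\le T}\nu(gx_0)\nu(g^{-1}x_0)\sim\frac{(\int\nu\,d\mu_{x_0})^2}{\delta\|m_{BM}\|}e^{\delta T}$. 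Your final $\epsilon\to0$ sandwich then does not close, because the upper and lower bounds for $N(T)$ differ by the oscillation of $\nu$, not by $e^{O(\epsilon)}$.

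The missing idea is to localize the endpoints. Run the argument with $\{v\in V_\epsilon: v^+\in A\}$ and $\{v\in V_\epsilon: v^-\in B\}$ for small $A,B\subset\partial X$ on which the weights are nearly constant and positive. Then the same weight appears both in each $g$'s contribution and in the $\tilde m$-mass of the test sets, and it cancels. Summing over partitions yields equidistribution toward $\frac{1}{\delta\|m_{BM}\|}\mu_{x_0}\otimes\mu_{x_0}$, and only then does testing against $1$ give $C$. Flow boxes around individual vectors would build in this localization, but your proposal does not use them that way.
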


Huber's \cite{Huber59} proof holds when $M$ is an orientable compact manifold of constant negative sectional curvature and Margulis' \cite{Margulis69} proof holds when $M$ is a compact manifold without boundary with negative sectional curvature, but it has since been greatly generalised by various authors. Furthermore, in many settings one can obtain error terms. See, for example, \cite{Huber59,Patterson75,Sullivan79,LaxPhillips82,CdV83,Patterson,Lalley,DalboPeigne98,PolSharp94,PolSharp98,Cantrell25} for related results.

The main theorem in this article is a restricted counting problem where instead of counting the images under $\Gamma$,  we consider images under elements in  a fixed conjugacy class $\mathrm{Cl}(g):= \{h^{-1}gh: h\in\Gamma \}$ with $g\in \Gamma$ and obtain the following. 
  
\begin{theorem}\label{Hypothesis:First Step}
Let $\Gamma$ be a nonelementary word-hyperbolic group acting properly discontinuously, isometrically and convex cocompactly on a proper \linebreak $\CAT$
space $X$. Assume the geodesic flow on $X/\Gamma$ is mixing. Let $g \in \Gamma$ such that $\mathrm{Cl}(g)$ is not finite and let $x_0 \in X$.
Then there exists $C = C(x_0,g)>0$
% and $h>0$
 such that 
\begin{equation}
    \# \{g' \in \mathrm{Cl}(g) \hbox{ : }  d(x_0,g' \cdot x_0)\leq T\} \sim Ce^{\delta T/2}
\end{equation}
as $T\to +\infty$, where $\delta$ is the critical exponent of $\Gamma$ for its action on $X$.
\end{theorem}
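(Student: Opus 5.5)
Since $\mathrm{Cl}(g)$ is infinite, $g$ is not central and (nonelementary hyperbolic group) has infinite order or is a nontrivial finite-order element with finite centralizer; in either case the conjugacy class is in bijection with cosets $Z(g)\backslash\Gamma$ via $h\mapsto h^{-1}gh$, where $Z(g)$ is the centralizer. The plan is to reduce counting $h^{-1}gh$ to counting cosets $Z(g)h$ by the length of a shortest representative, exploiting the geometry of $\CAT$ spaces: if $h$ is chosen as a minimal-length representative of $Z(g)h$ (i.e.\ $h^{-1}\cdot x_0$ minimizes distance to the axis/fixed set of $g$ within its $Z(g)$-orbit), then $d(x_0,h^{-1}gh\cdot x_0)=d(h x_0... )$ — more precisely $d(x_0,h^{-1}gh x_0)=d(hx_0,ghx_0)$, and the geodesic from $hx_0$ to $ghx_0$ passes within bounded distance of the axis $A_g$ (or fixed point set, for elliptic $g$). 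Hence
\[
d(x_0,h^{-1}gh\cdot x_0)=2\,d(hx_0,A_g)+\ell(g)+O(1),
\]
and refining with the $\CAT$ comparison, the $O(1)$ converges to a function of the direction of the projection of $hx_0$ onto $A_g$ (a Busemann-type/Gromov product correction) as $d(hx_0,A_g)\to\infty$. This explains the exponent $\delta T/2$.

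The key steps in order: (1) establish the bijection $\mathrm{Cl}(g)\leftrightarrow Z(g)\backslash\Gamma$ and note $Z(g)$ is virtually cyclic (or finite) and acts cocompactly on $A_g$ (a quasi-convex subgroup). (2) Prove the displacement formula above with an asymptotically exact error term: for points $y$ far from $A_g$, $d(y,gy)=2d(y,A_g)+\ell(g)-c(\xi_y)+o(1)$ where $\xi_y$ records the projection point and the exit direction; this uses thin triangles and convexity of the distance function in $\CAT$. (3) Count: the problem becomes
\[
\#\{Z(g)h : 2d(hx_0,A_g)-c(hx_0)\le T-\ell(g)\},
\]
which is an orbital counting problem for $\Gamma$-orbit points measured by distance to the convex $Z(g)$-invariant set $A_g$, modulo $Z(g)$. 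Such ``counting in a tube / distance to a convex subset'' asymptotics, of the form $C' e^{\delta R}$ with $R=T/2$, follow from mixing of the geodesic flow by the method of Margulis and its generalisation by Parkkonen--Paulin (equidistribution of outward normal boundaries of convex sets with respect to the Bowen--Margulis measure), which applies to convex cocompact actions on $\CAT$ spaces with mixing flow. The bounded correction $c$ is handled by partitioning the unit normal bundle of $A_g/Z(g)$ into small pieces on which $c$ is nearly constant and summing, giving the constant $C=\int e^{\delta c/2}\,d\mu$ against the skinning-type measure, times Patterson–Sullivan normalisations.

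The main obstacle is step (3): proving the equidistribution of the expanding normal boundary of $A_g$ (a compact-quotient convex set in the quotient) under the geodesic flow in the non-manifold $\CAT$ setting, and then passing from equidistribution to orbit counting with a sharp asymptotic rather than bounds. I would first try to use Roblin's framework (mixing of Bowen--Margulis measure implies $\#\{\gamma: d(x,\gamma y)\le T,\ \gamma y\in$ cone$\}\sim$ product of Patterson–Sullivan masses) applied to cones emanating from $A_g$, summing over a fundamental domain for $Z(g)$ on $A_g$; the elliptic case of $g$ (finite $Z(g)$, fixed set bounded) is then a simpler instance of the same argument.
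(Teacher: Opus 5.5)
Your outline takes a genuinely different, geometric route: perpendiculars from the axis of $g$, in the spirit of Huber and Parkkonen--Paulin. But its decisive step is missing.

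\textbf{The gap is step (3).} This is where the whole difficulty lies, and you leave it as an announced ``main obstacle''. To turn $d(hx_0,A_g)$-counting into an exact asymptotic for $\#\{Z(g)h: 2d(hx_0,A_g)-c(hx_0)\le T-\ell(g)\}$ you need two things.
\begin{enumerate}
\item Counts of common perpendiculars from $A_g$ to $\Gamma x_0$ whose initial vectors lie in prescribed pieces of the outer normal bundle. This is equidistribution of initial vectors toward a skinning measure, and it must hold for convex cocompact actions on an arbitrary proper $\CAT$ space, where geodesics need not extend and ``normal directions'' are not canonically defined.
\item Continuity of the limiting correction $c$ on that bundle, and uniformity of the $o(1)$ in your step (2). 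Without these you cannot run the partition argument with pieces whose boundaries are null.
\end{enumerate}
You neither prove these nor cite them precisely; you only assert that the Parkkonen--Paulin method ``applies''. The paper records that this is exactly where the geometric approach had stalled. It gave exact asymptotics only in special situations: constant curvature in low dimension, trees (where $d(y,gy)=2d(y,A_g)+\ell(g)$ exactly, so there is no correction at all), and compact manifolds under a pinching hypothesis needed for the geometric estimates. In variable curvature or dimension $>3$ it gave only bounds. So as written, the proposal does not prove the theorem in the stated generality.

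\textbf{Step (1) is also wrong.} A finite-order $g$ with infinite conjugacy class need not have finite centraliser or bounded fixed set. Take the involution $g$ in the hyperbolic (indeed virtually free) group $\Gamma=(\mathbb{Z}/2\times F_2)*\mathbb{Z}$. Its centraliser $Z(g)=\mathbb{Z}/2\times F_2$ is non-elementary and of infinite index. For any proper action, $\mathrm{Fix}(g)$ is unbounded, since a bounded fixed set would have a $Z(g)$-fixed circumcentre, forcing $Z(g)$ to be finite. So the torsion case is not a ``simpler instance'' of the loxodromic one.

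\textbf{How the paper avoids both problems.} It never uses $A_g$ and treats all $g$ uniformly. $Z(g)$ is quasiconvex for every $g$, and has infinite index iff $\partial Z(g)\neq\partial\Gamma$, so Theorem~\ref{theorem: Redfern} gives an automaton enumerating minimal word-length representatives of $Z(g)\backslash\Gamma$. Your $d(hx_0,A_g)$ is replaced by $L_x=d(\overline{\lambda(x)}\cdot x_0,x_0)$, and your exit direction by a cylinder $[u]$.
\begin{itemize}
\item The cylinder counts $N^u(T)\sim C_ue^{\delta T}$ of Proposition~\ref{proposition: counting representatives of cosets} come from the transfer operator with roof $r$ and Wiener--Ikehara. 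This needs showing that maximal components are not connected, and that mixing (via the prime geodesic count) rules out a lattice roof.
\item Proposition~\ref{Proposition: Estimating Lengths} is the symbolic counterpart of your step (2), proved from Lemma~\ref{definition: strong hyperbolicity}.
\item Lemma~\ref{Lemma: Convergence of the Sum} replaces your partition of the normal bundle.
\end{itemize}
The coding thus supplies exactly the direction-resolved counting your step (3) would have to extract from an equidistribution theorem not available in this generality.
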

Note that the exponential growth rate in this restricted case 
(Theorem \ref{Hypothesis:First Step})
is half that in the unrestricted case (Proposition \ref{theorem: Margulis}). We refer to the Appendix for the definition of a $\CAT$-space, noting that examples include simply connected Riemannian manifolds of sectional curvature $\leq -1$ and metric trees.
Theorem \ref{Hypothesis:First Step} is already known in certain settings where $\Gamma$ is a group acting isometrically on a space $X$ which is either a complete simply connected Riemannian manifold of pinched negative sectional curvature, or a tree. Huber \cite{Huber1} first established the theorem in the case where $X/\Gamma$ is a compact manifold of constant negative curvature and later obtained error terms in the same setting \cite{Huber3}. An analogous result for $(q+1)$-regular trees with $q$ odd was obtained in \cite{Douma} using spectral methods for the discrete Laplacian. Huber also interpreted the counting result in geometric terms as counting geodesic arcs perpendicular to certain quasiconvex subsets of $X$. Parkkonen and Paulin further developed this interpretation in \cite{Parkkonen2015}, and used results from their previous work \cite{PakkonenPaulin14} in counting perpendicular arcs to obtain exact asymptotics in the case where $\Gamma$ is geometrically finite and $X$ is the hyperbolic plane, as well as bounds and weaker estimates in the higher-dimensional, variable negative curvature case. These methods were further extended by Broise-Alamichel-Parkkonen-Paulin \cite{broisealamichel19} who obtain asymptotics in the case where $X$ is a metric or simplicial tree, and Honaryar \cite{honaryar22}, who obtains asymptotics where $X/\Gamma$ is a compact manifold with a pinching condition on the curvature. These methods rely on counting results developed in \cite{OS13,PakkonenPaulin14} for counting perpendicular arcs in $X/\Gamma$ between projections of closed convex subsets of $X$.

In this manuscript, we adopt a different approach using thermodynamic formalism. Such methods were previously employed by Kenison and Sharp \cite{KenisonSharp17,KenisonSharp} who proved asymptotics in the case where $\Gamma$ is a free group acting on a metric tree with non-arithmetic length spectrum, as well as a central limit theorem for free groups acting on $\CAT$-spaces. It was also employed by the second author \cite{Pollicott}, who sketched Theorem \ref{Hypothesis:First Step}, restricted to the  case where $\Gamma$ is a free group. In that case the  restriction was due to the fact that it was previously unclear whether a coding scheme existed that allowed us to enumerate all the elements $\mathrm{Cl}(g)$ for a general hyperbolic group.

The  main innovation in the present paper is to employ a coding due to Redfern \cite{Redfern93}, see also \cite{Holt99}, which uniquely enumerates the right-cosets
$Z(g)h$ of the centraliser \[Z(g):=\{h\in \Gamma: hg=gh\}\] with a representative of minimal word length. Since $Z(g)a= Z(g)b$ for $a,b\in \Gamma$ if and only if $a^{-1}ga = b^{-1}gb$, we have that $ba^{-1}\in Z(g)$ and therefore that each right coset corresponds to a unique element of $\mathrm{Cl}(g)$. We thus also obtain an enumeration of $\mathrm{Cl}(g)$. One advantage of using a coding which works at the group-theoretic level is that it allows us to treat the counting problem for convex cocompact actions on $\CAT$-spaces without requiring an improvement in the geometric estimates in \cite{honaryar21}. In particular, Theorem \ref{Hypothesis:First Step} is new when $X/\Gamma$ is an infinite-volume convex cocompact Riemannian manifold of either variable negative curvature or of dimension greater than $3$. The theorem also recovers the asymptotics in \cite{Parkkonen2015,honaryar22,KenisonSharp17,KenisonSharp} and Theorem 13.1 (1) in \cite{broisealamichel19}.

 \begin{remark}
 For the case of discrete groups acting convex cocompactly on $\CAT$-spaces, Roblin \cite{Roblin03} showed that obtaining an asymptotic of the form \eqref{equation: Margulis} is equivalent to having non-arithmetic length spectrum. Here, the length spectrum is defined for each conjugacy class $\mathrm{Cl}(g)$ of a hyperbolic element $g$ by 
 \[\mathrm{length}(\mathrm{Cl}(g)):= \inf_{x\in X}d(x,g\cdot x).\]
 In the case where $\Gamma$ has no elliptic elements, this corresponds to the length of the closed geodesic associated to $\mathrm{Cl}(g)$. Non-arithmeticity of the length spectrum here means that the subgroup generated by the lengths of all hyperbolic conjugacy classes is dense in $\mathbb{R}$. 
 
\end{remark}

\section{Preliminaries and Notation}
In this section, we define some notation and recall some facts about hyperbolic groups.

We shall later consider directed graphs $\mathcal{G}$ whose paths starting at some distinguished vertex $*$ generate representations of group elements of shortest length. It is therefore important that we distinguish between the paths in $\mathcal{G}$, the representations of group elements, and the group elements themselves. 

\subsection{Shortest Representations}\label{subsection: Shortest Representations}

Let $\Gamma$ be a finitely generated group and let ${\Gamma_0}\subset \Gamma$ be a finite set of generators. Assume furthermore that ${\Gamma_0}$ is symmetric, i.e. $\overline{a}\in {\Gamma_0}$ implies that $\overline{a}^{-1}\in {\Gamma_0}$. Assume that the cardinality $\abs{{\Gamma_0}}$ of ${\Gamma_0}$ is equal to $n$. We define $S:=\{1,2,\ldots, n\}$ and fix some bijection $S\to {\Gamma_0}: a\mapsto \overline{a}$. We define the set of \textbf{words}
\[S^*:= \bigcup_{k\in\mathbb{N}} S^k,\]
where $S^k$ is the set-theoretic $k$-th Cartesian product. There is a natural map from the set of words in $S^*$ to $\Gamma$ given as follows. For a word $w=w_1w_2\cdots w_k\in S^*$ with $w_i\in S$ for all $i\in \{1,2,\ldots,k\}$, we define 
\begin{equation}\label{equation: words to elements}
  \overline{w}=\overline{w}_1 \overline{w}_2 \cdots \overline{w}_k.  
\end{equation}
 There is also a natural interpretation of $S^*$ as the set of paths in the \textbf{Cayley Graph} $(\Gamma,{\Gamma_0})$ starting at the identity element $e\in \Gamma$. Indeed, we may identify the word $w=w_1w_2\ldots w_k\in S^*$ with the path in $(\Gamma,{\Gamma_0})$  successively connecting the sequence of vertices
\[(e,\quad \overline{w}_1,\quad  \overline{w}_{1}\overline{w}_2\quad, \ldots,\quad  \overline{w}_1\cdots \overline{w}_{k-1}\overline{w}_k).\]
We define the length $\abs{w}$ of a word $w\in S^*$ to be the unique value of $k$ for which $w\in S^k$, and we define for a group element $g\in \Gamma$ the \textbf{word-length} 
\[\abs{g}_{{\Gamma_0}}:= \min\{\abs{w}: w\in S^* \text{ and } \overline{w}=g\}.\]
A word $w\in S^*$ such that $\abs{w}= \abs{\overline{w}}_{{\Gamma_0}}$ is called \textbf{geodesic}, as it corresponds to a geodesic path starting from $e$ in $(\Gamma,{\Gamma_0})$ equipped with the word-length metric.

\subsection{The Redfern Coding}
This section briefly introduces the \textbf{Redfern coding} for right cosets of a quasiconvex subgroup $H\subset \Gamma$. The case $H=\{e\}$ is often called the \textbf{Cannon coding}. The existence of a coding scheme for groups acting cocompactly on $\mathbf{H}^d$ is due to Cannon \cite{Cannon84}. That this generalises to word-hyperbolic groups was remarked upon by Gromov \cite{Gromov87}. A detailed proof for word-hyperbolic groups is given in e.g. Ghys and de la Harpe in \cite{Ghys90}.
\begin{definition}\label{definition: path-labelling}
    Let $S=\{1,2,\ldots,n\}$ for some $n\in \mathbb{N}$ and let $\mathcal{G}$ be a finite directed graph with
    \begin{itemize}
        \item a vertex set $V$,
        \item an edge set $E\subset V\times V$,
        \item a distinguished vertex $*$ such that no edge in $E$ ends at $*$ and
        \item an edge labelling map $\lambda:E\to S:u\mapsto \lambda(u)$.
    \end{itemize}
    Let $\mathfrak{L}(\mathcal{G})$ be the set of paths $v=(*,v_1,v_2,\ldots,v_l)$, $l\in\mathbb{N}$ in $\mathcal{G}$ starting at the vertex $*$, i.e. the set of finite sequences $v$ starting at $v_0=*$ and satisfying
    \[ (v_{i-1},v_i)\in E\]
    for all $i\in \{1,\ldots,l\}$.
    We extend $\lambda$ to a \textbf{path-labelling map} by defining $\lambda: \mathfrak{L}(\mathcal{G}) \to S^*$ by setting
    \[\lambda\left(v\right):=\lambda((*,v_1))\lambda((v_1,v_2))\cdots \lambda((v_{l-1},v_l)).\]
\end{definition}
For later consistency, we use the convention that the \enquote{path} $(*)$ consisting of only the starting vertex is in $\mathfrak{L}(\mathcal{G})$ and that $\lambda((*))=\emptyset$, with $\overline{\emptyset}=e$.
Assuming $S$ is in some bijection with a set of generators ${\Gamma_0}$ of $\Gamma$, as in Subsection \ref{subsection: Shortest Representations}, we see that the map $\lambda$ assigns paths in $\mathcal{G}$ starting at $*$ to words in $S^*$ and hence to representations of elements of $\Gamma$. 

The existence of a Cannon coding for hyperbolic groups essentially states that we can find a $(\mathcal{G},*,\lambda)$ as in Definition \ref{definition: path-labelling} such that each element of $\Gamma$ is represented uniquely with a geodesic word. We consider a related concept, where $\mathfrak{L}(\mathcal{G})$ maps onto unique geodesic representatives of right cosets of quasiconvex subgroups.

\begin{definition}\label{definition: strongly Markov}
    We say the group $\Gamma$ is $H$-\textbf{strongly Markov} for some subgroup $H\subset\Gamma $ if for any symmetric set of generators ${\Gamma_0}$, there exists $(\mathcal{G},*,\lambda)$ as in Definition \ref{definition: path-labelling} such that the associated path-labelling map $\lambda$ satisfies the following: 
    \begin{itemize}
        \item the image $\lambda(\mathfrak{L}(\mathcal{G}))$ consists only of geodesic words, and 
        \item the induced map 
    \[\pi_H:\mathfrak{L}(\mathcal{G})\to H\backslash\Gamma: v\mapsto H\overline{\lambda(v)}\]
    is bijective.
    \end{itemize}
    \end{definition}
 We shall henceforth always assume that given $(\mathcal{G},*,\lambda)$ as in the above definition, we may find a path from $*$ to any vertex in $V$, as such a graph can always be obtained by removing these inaccessible vertices without changing $\mathfrak{L}(\mathcal{G})$.
 \begin{definition}\label{definition: Redfern coding}
    We call $(\mathcal{G},*,\lambda)$ as in Definition \ref{definition: strongly Markov} with the above assumption a \textbf{Redfern coding}.
\end{definition}

\begin{remark}
    When $H= \{e\}$, Definition \ref{definition: strongly Markov} corresponds to the definition of \enquote{fortement Markov} in \cite{Ghys90}. We have presented this definition in a slightly different manner to the way it was presented in \cite{Ghys90}, in which the authors directly consider the map $\pi_{\{e\}}:\mathfrak{L}(\mathcal{G})\to \Gamma$ instead of factoring it through the set of words $S^*$, but our formulation is equivalent. 
\end{remark}
Let us now recall that if $G$ is a Gromov hyperbolic space, we say a subset $H\subset G$ is $C$-quasiconvex if for any geodesic $\gamma:[0,b]\to G$ with both endpoints in $H$ we have that $d_G(\gamma(t),H)\leq C$ for all $t\in [0,b]$, where $d_G$ is the metric on $G$. 
\begin{definition}
    If $\Gamma$ is a hyperbolic group, we say a subgroup $H$ of $\Gamma$ is \textbf{quasiconvex} if there exists some $C$ such that $H$ is $C$-quasiconvex with respect to the word-length metric on the Cayley graph $(\Gamma,{\Gamma_0})$.
\end{definition}
 This definition is independent of the choice of generators. With this definition in mind, we have the following.
\begin{theorem}[Redfern]\label{theorem: Redfern}
    If $\Gamma$ is a word-hyperbolic group and $H\subset \Gamma$ is a quasiconvex subgroup, then $\Gamma$ is $H$-strongly Markov.
\end{theorem}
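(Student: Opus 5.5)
The plan is to build the graph $\mathcal{G}$ as a finite-state automaton whose states are \emph{cone types relative to $H$}, in the spirit of Cannon's proof for $H=\{e\}$ as presented in \cite{Ghys90}. Fix a symmetric generating set ${\Gamma_0}$ and the word metric $d$ on the Cayley graph. For $x\in\Gamma$, let $d_H(x):=d(H,x)=\min_{h\in H}\abs{h^{-1}x}_{{\Gamma_0}}$, so $d_H(x)$ is the word length of the shortest representative of the coset $Hx$.

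\textbf{Choosing representatives.} Fix a total order on $S$. Each coset $Hx$ gets a canonical representative: the element $y\in Hx$ that is reached by the $\lex$-least word among all geodesic words of length $d_H(x)$ whose value lies in $Hx$. The words chosen this way form a language $L$. It is prefix closed. Indeed, let $w$ be the chosen word for $Hx$ and $w'$ a prefix of $w$. Then $w'$ is geodesic, and it attains $d_H(\overline{w'})$, since otherwise we could shorten $w$ while staying in $Hx$. The $\lex$-minimality of $w'$ then follows from that of $w$ by swapping prefixes. So $L$ is a tree of words in which each coset appears exactly once. It remains to show that $L$ is regular, i.e. accepted by a finite graph. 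A prefix-closed regular language is then the path-label set of a finite automaton with start state $*$. We make every state accepting by deleting the failure state, and we can arrange that no edge enters $*$ by duplicating the start state. This gives a Redfern coding in the sense of Definition \ref{definition: Redfern coding}.

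\textbf{Finiteness of relative cone types.} To each $x$ we attach the \emph{$N$-tail} $\tau_N(x)$. This records, for every $z$ in the ball $B(e,N)$, the pair $d_H(xz)-d_H(x)\in[-N,N]$ together with the $\lex$-comparison data among competing geodesic representatives ending in the ball $xB(e,N)$. There are only finitely many possible values of $\tau_N$. The key lemma is that for $N$ large, depending on the hyperbolicity constant $\delta_0$ and the quasiconvexity constant $C$ of $H$, the tail $\tau_N(x)$ determines both the set of continuations $u$ with $wu\in L$ (where $w$ is the chosen word for $Hx$) and the tail $\tau_N(xs)$ for $s\in S$. Given the lemma, the states of $\mathcal{G}$ are the tails, with an edge labelled $s$ from $\tau_N(x)$ to $\tau_N(x\overline{s})$ whenever the one-letter extension stays in $L$.

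\textbf{The main obstacle} is proving this key lemma, i.e. the relative version of Cannon's cone-type argument. The approach is by induction on $\abs{u}$, following Cannon's proof. Suppose $u$ is a continuation with $d_H(xu)=d_H(x)+\abs{u}$, and suppose a competing path to $H$ from $xu$ gets shorter. Consider the geodesic triangle with vertices $xu$, $x$, and a nearest point of $H$. By $\delta_0$-thinness together with the $C$-quasiconvexity of $H$, the competing path must pass within distance $K=K(\delta_0,C)$ of a point near $x$, or near $H$ itself. Quasiconvexity is what controls the second case: a geodesic from a point to $H$ stays uniformly close to $H$ only near its endpoint, so nearest-point projections to $H$ are coarsely well defined. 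This shows that every failure of $d_H$ to grow, and likewise every $\lex$ tie-break, is already detected within $B(x,N)$ for $N\ge 2K+2\delta_0$. The same localisation argument gives the transition rule $\tau_N(x)\mapsto\tau_N(x\overline{s})$. This step should use the fact from \cite{Ghys90} that geodesics in hyperbolic groups are $\delta_0$-close under fellow travelling.
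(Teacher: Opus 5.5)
Your reduction (ShortLex-least coset representatives form a prefix-closed language $L$ of geodesic words in bijection with $H\backslash\Gamma$, so it suffices that $L$ be regular) is exactly what the paper records in its appendix. The paper then takes regularity from Redfern and Holt without proof. So everything rests on your key lemma, and as formulated it has two concrete problems.

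\textbf{First problem: the transition claim.} The assertion that $\tau_N(x)$ determines $\tau_N(x\overline{s})$ does not follow from your localisation argument. The new tail involves $d_H$ at points at distance $N+1$ from $x$, and a shortest path from $H$ to such a point need not enter $B(x,N)$. Cannon's argument only gives that the tail determines the cone type, and that is all you need. If tails do not transition deterministically, your graph has two $s$-edges out of some vertex. Then two distinct paths carry the same label and give the same coset, so $\pi_H$ fails to be injective. The fix is to take as vertices the relative cone types $C_L(w)=\{u: wu\in L\}$ for $w\in L$. These satisfy $C_L(ws)=\{u: su\in C_L(w)\}$ for $s\in C_L(w)$, so the automaton is deterministic by construction. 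Finiteness of the vertex set is then exactly the first half of your lemma (Myhill--Nerode). To respect $E\subset V\times V$, also record the last letter read in each vertex.

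\textbf{Second problem: the ShortLex data.} This is the genuinely relative part, and here the data you describe is not the right data; the lex half of the lemma is asserted rather than proved. A competitor of $wu$ is a word $v$ with $\abs{v}=\abs{wu}$, $v$ lexicographically smaller, and $\overline{v}=h\,\overline{wu}$ for some $h\in H$. It ends near $h\overline{wu}$, typically nowhere near $xB(e,N)$. What localises it is the following: once $wu$ is a shortest path from $H$, both $e$ and $h^{-1}$ are nearest points of $H$ to $\overline{wu}$. Hence $\abs{h}$ is bounded in terms of $\delta_0$ and $C$, and $h^{-1}v$ synchronously fellow-travels $wu$. Cutting at time $\abs{w}$ gives two cases.
\begin{itemize}
\item The first disagreement lies in the first $\abs{w}$ letters. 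Then the prefix of $v$ of length $\abs{w}$ is lexicographically smaller than $w$ and represents an element of $H\overline{w}z$ with $z$ in a fixed ball, while the remainder represents $z^{-1}\overline{u}$.
\item $v=wv'$ with $v'$ lexicographically smaller than $u$ and $\overline{v'}\in(\overline{w}^{-1}H\overline{w}\cap B(e,N))\,\overline{u}$.
\end{itemize}
So the state must record two further pieces of data:
\begin{itemize}
\item[(i)] the set of $z$ in a fixed ball for which some word of length $\abs{w}$, lexicographically smaller than $w$, represents an element of $H\overline{w}z$;
\item[(ii)] the finite set $\overline{w}^{-1}H\overline{w}\cap B(e,N)=\{z\in B(e,N): H\overline{w}z=H\overline{w}\}$.
\end{itemize}
Neither is a function of the values $d_H(xz)-d_H(x)$, and (ii) does not appear in your $\tau_N$ at all.

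With the $d_H$-tail plus (i) and (ii), the relative cone type is determined and the proof closes. Your $d_H$-part is right in spirit. The precise input from quasiconvexity is that nearest-point projection to $H$ is coarsely Lipschitz. When comparing $x,x'$ with equal tails and $u=u's$ with $u'$ a relative geodesic continuation at $x'$, a shortest path from $H$ to $x'\overline{u}$ starts boundedly close to a nearest point $h_0\in H$ of $x'$. It therefore fellow-travels $[h_0,x']$ followed by $u'$, and so passes within $N$ of $x'$; this is what lets you transfer the shortcut to $x$.
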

We remark that we do not employ Redfern's original formulation \cite{Redfern93} here. We expand on this in somewhat more detail in the appendix.
\section{labelling Representatives of Cosets}\label{section: Counting Representatives of Cosets}
The proof of Theorem \ref{Hypothesis:First Step} is in essence analogous to the second author's unpublished proof when $\Gamma$ is a free group in \cite{Pollicott}, which involves counting orbit points $g'\cdot x_0$ subject to an additional restriction on the shortest length representation of $g'$. We shall do the same, except we use a Redfern coding that allows us to enumerate unique representatives of right cosets of the centraliser $Z(g):=\{h\in \Gamma: hg=gh\}$. This restriction allows us to avoid overcounting in the final proof, as we observed earlier that two group elements $a,b\in \Gamma$ satisfy $a^{-1}ga=b^{-1}gb$ if and only if $Z(g)a=Z(g)b$. However, since the methods in this section work for any quasiconvex subgroup $H\subset \Gamma$ of infinite index, we work in the more general setting.
\subsection{Subshifts of Finite Type}\label{subsection:Subshifts of finite type}
We first briefly show how, given a Redfern coding $(\mathcal{G},*,\lambda)$, we may embed $\mathfrak{L}(\mathcal{G})$ in a subshift of finite type. Following the construction in \cite{Pollicott1998ComparisonTA}, we add the vertex $0$ to enlarge the vertex set $V$ to $\tilde{V}:=\{0\}\cup V$.  We enlarge $E$ to $\widetilde{E}$ by adding a directed edge from every vertex in $\{0\}\cup V$ to $0$. Let $\tilde{\mathcal{G}}$ be the associated enlarged directed graph. We associate to $\tilde{\mathcal{G}}$ a transition matrix $A\in\tilde{V}\times \tilde{V}$ by letting $A(v_1,v_2)=1$ if and only if there is a directed edge in $\widetilde{E}$ from $v_1$ to $v_2$. We then define the shift space 
\[\Sigma := \{v=(v_i)_{i=0}^{\infty}:v_i\in \tilde{V}, A(v_i,v_{i+1})= 1 \text{ for all } i\in \mathbb{N}  \},\]
equipped with the left shift map $\sigma:\Sigma\to \Sigma: (v_0,v_1,\ldots)\mapsto (v_1,v_2,\ldots)$. We define a metric on $\Sigma$ by defining
the distance between $v=(v_i)_{i=0}^{+\infty} \in \Sigma$ and $v'=(v'_i)_{i=0}^{+\infty}\in\Sigma $ by \begin{equation}\label{Equation: the Natural Subshift Metric}
d_\Sigma(v,v') = 2^{-N}\text{, where } N = \inf \{k\in\mathbb{N}:v_k\neq v'_k\},\end{equation}
with the convention that $d_\Sigma(v,v)=0$.
It follows immediately that $\mathfrak{L}(\mathcal{G})$ is in bijection with all sequences of the form $(*,v_1,\ldots,v_k,0,0,\ldots)$. It will turn out useful to give an interpretation to sequences that do not start at $*$. 

\begin{definition}
    Let $\mathcal{P}(\mathcal{G})$ be the set of \textit{all} finite sequences of the form
\[(v_0,\ldots,v_k)\text{ with } v_i\in V \text{ for } i\in \{0,1,\ldots, k\} \text{ and } A(v_{i-1},v_{i})=1\]
for $i\in \{1,\ldots,k\}$.
\end{definition}
If we define the \textbf{terminating sequences} $\Sigma^{term}$ to be the set  $(v_k)_k\in\Sigma$ with $v_k=0$ for $k$ sufficiently large, we see that the map $\Sigma^{term}\to \mathcal{P}(\mathcal{G})$ obtained by dropping the zeros from the sequence is a bijection, and we may extend the map $\lambda: \mathfrak{L}(\mathcal{G}) \to S^*$ to a map $\mathcal{P}(\mathcal{G}) \to S^*$ which we also denote by \enquote{$\lambda$} by setting for $v=(v_0,\ldots,v_l)$:
    \[\lambda\left(v\right):=\lambda((v_0,v_1))\lambda((v_1,v_2))\cdots \lambda((v_{l-1},v_l)).\]

\begin{lemma}\label{lemma: all words are geodesic}
     Let $(\mathcal{G},*,\lambda)$ be a Redfern coding and $\Sigma$ its associated subshift of finite type. 
    The image under the map \[ \Sigma^{term}\to S^*:v\mapsto \lambda\left(v\right)\]
    consists only of geodesic words.
\end{lemma}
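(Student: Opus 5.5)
The idea is to reduce an arbitrary terminating sequence to a path starting at $*$, and then use that subwords of geodesic words are geodesic. Let $v\in\Sigma^{term}$. Dropping the trailing zeros gives $(v_0,\ldots,v_k)\in\mathcal{P}(\mathcal{G})$, and $\lambda(v)$ is by definition the label of this finite path. (Any block of zeros followed by a vertex of $V$ is impossible, since the only edges out of $0$ go back to $0$.) If $k=0$ the word is empty and trivially geodesic, so assume $k\geq 1$.

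Since every vertex of $V$ is accessible from $*$, as assumed before Definition \ref{definition: Redfern coding}, there is a path $u=(*,u_1,\ldots,u_m)$ in $\mathcal{G}$ with $u_m=v_0$. The concatenation
\[ w=(*,u_1,\ldots,u_m=v_0,v_1,\ldots,v_k)\]
lies in $\mathfrak{L}(\mathcal{G})$, because all its consecutive pairs are edges of $E$. The path-labelling map is multiplicative under concatenation, so $\lambda(w)=\lambda(u)\lambda((v_0,\ldots,v_k))$. By Definition \ref{definition: strongly Markov}, which applies by Theorem \ref{theorem: Redfern}, the word $\lambda(w)$ is geodesic.

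It then remains to show that a suffix $y$ of a geodesic word $xy$ is geodesic. Suppose instead that there were $y'$ with $\overline{y'}=\overline{y}$ and $\abs{y'}<\abs{y}$. Then $xy'$ would represent $\overline{xy}$ and satisfy $\abs{xy'}<\abs{xy}=\abs{\overline{xy}}_{\Gamma_0}$, contradicting geodesicity. Hence $\lambda(v)=\lambda((v_0,\ldots,v_k))$ is geodesic.

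The only step needing care is the accessibility of $v_0$ from $*$, and this is exactly the standing assumption. Note also that the case $v_0=*$ is covered by the same argument, with $u=(*)$. There is no real obstacle beyond this bookkeeping.
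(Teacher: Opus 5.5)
Your proposal is correct and matches the paper's proof: both prepend a path from $*$ to $v_0$, using the standing accessibility assumption, to obtain an element of $\mathfrak{L}(\mathcal{G})$ whose label is geodesic. A shorter representative of the suffix would then produce a shorter representative of the whole word. The paper argues by contradiction directly, while you isolate the ``suffixes of geodesic words are geodesic'' step, which is only a cosmetic difference.
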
 
\begin{proof}
 Assume by contradiction that there exists some terminating sequence $(v_0,v_1,\ldots,v_k,0,0,\ldots)\in \Sigma^{term}$ such that $\lambda(v_0,v_1)\cdots \lambda(v_{k-1},v_k)$ is not a geodesic word, i.e. assume that there is some $w\in S^*$ such that $\abs{w}<k$ and $\overline{w}= \overline{\lambda(v_0,v_1)\cdots \lambda(v_{k-1},v_k)}$. Denote $v=(v_0,\ldots,v_k)$ and let $v'= \left(v'_0,\ldots,v'_{k'}\right)\in \mathfrak{L}(\mathcal{G})$ with $v'_0=*$ and $v'_{k'}=v_0$. If we denote by $v'v$ the concatenated path $(v'_0,\ldots,v'_{k'},v_1,\ldots,v_k)$, then it is immediate that $v'v\in \mathfrak{L}(\mathcal{G})$ and hence that $\lambda(v'v)=\lambda(v')\lambda(v)$ is a geodesic word. However, we see that $\lambda(v')w$ is strictly shorter as a sequence than $\lambda(v'v)$, which is a contradiction.
\end{proof}
In general, the combinatorial structure of the transition matrix $A$ might be somewhat complicated. Indeed, this is an issue even for the Cannon coding. It will therefore be useful to consider its irreducible components separately. Recall that a square matrix $B\in \{0,1\}^{k\times k}$ is \textbf{irreducible} if for each pair of natural numbers $1\leq i,j\leq k$ we have that $B^m(i,j)>0$ for some $m\in \mathbb{N}$, and that it is aperiodic if there is some $m\in \mathbb{N}$ such that each entry in $B^m$ is strictly positive. Following e.g. Section 1.2 in \cite{Seneta}, we can assign an order to the vertex set $V\cup \{0\}$ so $A$ is of the form 
\begin{equation}\label{equation: Bs}
    A=\begin{pmatrix}
        B_{11} & 0 & \cdots &0\\
        B_{21} & B_{22} & \cdots &0\\
        \vdots & \vdots & \ddots & \vdots\\
        B_{k1} & B_{k2} &\cdots & B_{kk}
     \end{pmatrix}
\end{equation}
for some $k\in\mathbb{N}$ and so $B_{11}, \ldots ,B_{kk} $ are irreducible square matrices. Denote the subsets of $\tilde{V}$ on which the matrices $B_{ll}$ are respectively defined by $V_l$. Define the associated shift spaces to be the subset of sequences in $\Sigma$ consisting only of vertices in $V_l$.
\begin{definition}\label{definition: components graph}
    We define the \textbf{component graph} of $\widetilde{\mathcal{G}}$ to be the directed graph with vertex set $\{V_1,\ldots,V_k\}$ and a directed edge from $V_i$ to $V_j$ if and only if $B_{ij}$ is not the zero matrix, i.e. there exists an edge in $\widetilde{E}$ from a vertex in $V_i$ to a vertex in $V_j$. We define a partial order on the components $V_i$ by setting $V_i\prec V_j$ if there exists a path from $V_i$ to $V_j$ in the components graph.
\end{definition} 

\subsection{Hölder Continuity of the Potential}
We define a Hölder continuous potential related to the coset counting problem. The existence of one is well-established for the Cannon coding, which carries over in our case. We shall expand on this in some detail to emphasize the fact that Hölder continuity follows from convex cocompactness of the action of $\Gamma$ and Lemma \ref{lemma: all words are geodesic}, but we stress that the proof is essentially contained in Proposition 3 in \cite{PolSharp01} in the variable curvature case. Let us recall some standard concepts in hyperbolic geometry. We refer to e.g. \cite{Gromov87, Ghys90,Coornaert90} for more information.
Let $(Y,d_Y)$ be a metric space. Then for $x,y,z\in Y$, we define the Gromov product
\begin{equation}\label{equation: Gromov product}
      (y,z)_x = \frac{1}{2}\left(d_Y(x,y)+d_Y(x,z)-d_Y(y,z)\right),
\end{equation}
which behaves \enquote{nicely} under quasi-isometries.
\begin{definition}
    We say a map $f:(Y,d_Y)\to (Y',d_{Y'})$ is a $(\lambda,c)$\textbf{-quasi-isometry} if for all $y,z\in Y$,
\[\lambda^{-1}d_Y(y,z)-c \leq d_{Y'}(f(y),f(z))\leq \lambda d_Y(y,z)+c\]
and we say $f$ is a quasi-isometry if the above holds for some $\lambda,c>0$.
\end{definition}
If we have a $(\lambda,c)$-quasi-isometry, then there exist constants $B$ depending on $\lambda,c$ and the constants of hyperbolicity of $Y,Y'$ such that for all $x,y,z\in Y$:
        \begin{equation}\label{equation: quasi-invariance}
            \frac{1}{\lambda}(y,z)_x - B \leq (f(y),f(z))_{f(x)} \leq \lambda (y,z)_x+B.
        \end{equation}
A useful characterisation of convex cocompactness is the following.
\begin{lemma}
    The action of a discrete group $\Gamma$ on $X$ by isometries is convex cocompact if and only if for some (and hence any) finite set of generators ${\Gamma_0}$, the map $\Gamma\to X$ is a quasi-isometry with respect to the word metric on $\Gamma$.
\end{lemma}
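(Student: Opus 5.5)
We prove the two implications separately. Throughout, fix a basepoint $x_0\in X$ and consider the orbit map $\phi:\Gamma\to X$, $g\mapsto g\cdot x_0$, with $\Gamma$ carrying the word metric $d_{\Gamma_0}(g,h)=\abs{g^{-1}h}_{\Gamma_0}$. Changing $x_0$ moves $\phi$ by a bounded amount. Changing ${\Gamma_0}$ replaces the word metric by a bi-Lipschitz equivalent one. Hence being a quasi-isometry is independent of both choices, which gives the ``some (and hence any)'' clause. The upper bound $d(g\cdot x_0,h\cdot x_0)\le L\, d_{\Gamma_0}(g,h)$ holds in general, with $L=\max_{a\in{\Gamma_0}} d(x_0,\overline{a}\cdot x_0)$. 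This follows from the triangle inequality and the fact that $\Gamma$ acts by isometries.

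\emph{Convex cocompact implies quasi-isometry.} Let $C\subset X$ be the convex hull of the limit set, or any nonempty $\Gamma$-invariant closed convex set on which $\Gamma$ acts cocompactly. We may take $x_0\in C$. Since $X$ is proper and $C$ is closed, $C$ with the induced metric is a proper geodesic space, because geodesics in a convex set stay in it. The action of $\Gamma$ on $C$ is proper and cocompact. The \v{S}varc--Milnor lemma then shows that $\phi:\Gamma\to C$ is a quasi-isometry. Since $C$ is convex, its intrinsic metric is the restriction of $d$, so $\phi:\Gamma\to X$ is a quasi-isometric embedding.

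\emph{Quasi-isometry implies convex cocompact.} Suppose $\phi$ is a quasi-isometric embedding. The image of a word geodesic in the Cayley graph is then a quasigeodesic in the $\CAT$, hence Gromov hyperbolic, space $X$. By Morse stability it lies within uniform distance $R$ of the $X$-geodesic between its endpoints, and conversely. It follows that the orbit $\Gamma x_0$ is quasiconvex. Its accumulation set is the limit set $\Lambda\Gamma$, and there is a $\Gamma$-equivariant boundary map $\partial\Gamma\to\Lambda\Gamma$. Moreover, every geodesic ray or line in $X$ with endpoints in $\Lambda\Gamma$ lies within a uniform distance $R'$ of $\Gamma x_0$, by the same stability argument applied to limits of quasigeodesics. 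The main obstacle is to pass from this to the convex hull. We show that $\mathrm{Hull}(\Lambda\Gamma)$ lies in a bounded neighbourhood of the union of geodesics joining points of $\Lambda\Gamma$. In $\CAT$ spaces this is a standard fact: the join of geodesics in a $\delta$-hyperbolic space stays uniformly close to them, and iterating the join finitely often is controlled via thin triangles. Once this holds, the hull lies in a bounded neighbourhood of $\Gamma x_0$. Therefore $\Gamma\backslash \mathrm{Hull}(\Lambda\Gamma)$ is contained in the image of a bounded ball, and by properness of $X$ it is compact. This is exactly convex cocompactness.

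The nonelementary assumption guarantees that $\Lambda\Gamma$ has at least two points, so the hull is nonempty. If one prefers to avoid the hull estimate, an alternative is to take $C$ to be the closed convex hull of an $R$-neighbourhood of the orbit. In that case one must show that $C$ stays within bounded distance of $\Gamma x_0$, which again rests on quasiconvexity together with the bounded-hull property of quasiconvex sets in $\CAT$ spaces.
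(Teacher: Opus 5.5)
Your forward direction is fine. \v{S}varc--Milnor applied to a $\Gamma$-invariant closed convex set $C$ with compact quotient gives the quasi-isometric embedding, because a convex $C$ is isometrically embedded in $X$. The paper itself gives no argument here and simply defers to the Main Theorem of \cite{Swenson}.

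The gap is in the converse, at exactly the step you call the main obstacle. Morse stability, quasiconvexity of $\Gamma x_0$, and the uniform closeness of lines with endpoints in $\Lambda\Gamma$ to the orbit control only the \emph{weak} hull $W$, the union of such lines. Your reason for passing from $W$ to $\mathrm{Hull}(\Lambda\Gamma)$ does not work.

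The hull is the closure of the increasing union of iterated joins $J^k(W)$. Thin quadrilaterals give only $J(N_r(W))\subset N_{r+c}(W)$, hence $J^k(W)\subset N_{kc}(W)$, and this bound grows with $k$. A uniform bound would need the iteration to stop after a bounded number of steps, which is a Carath\'eodory-type statement. That holds in $\mathbf{H}^n$: in the Klein model hyperbolic hulls are Euclidean hulls, so about $\log_2(n+1)$ joins suffice. Nothing of the sort follows from thin triangles in a general $\CAT$ space. Even for Hadamard manifolds of pinched variable curvature, the statement that the convex hull of a quasiconvex set stays at bounded distance from it is a genuine theorem (Anderson, Bowditch). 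Its proof constructs convex sets by other means and uses the pinching. Your fallback, the closed convex hull of a neighbourhood of the orbit, rests on the same unproved bounded-hull property.

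Here is what is actually missing. Since $\Gamma$ acts cocompactly on $W$, a nearest-point argument shows that $\mathrm{Hull}(\Lambda\Gamma)\subset N_D(W)$ for some $D$ as soon as $\partial_\infty \mathrm{Hull}(\Lambda\Gamma)=\Lambda\Gamma$. In other words, it suffices that taking the convex hull creates no new points at infinity. That is a statement about the existence of convex sets with prescribed ideal boundary, and $\delta$-hyperbolicity alone does not supply it.

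You have two ways forward:
\begin{enumerate}
\item Take convex cocompactness to mean cocompactness on the weak hull, or a quasiconvex orbit. Then your argument up to the obstacle is a complete proof, once you add that $W$ is closed in the proper space $X$.
\item Supply or cite a genuine bounded-hull result for limit sets of quasiconvex groups in proper $\CAT$ spaces.
\end{enumerate}
That second ingredient is exactly the part of the equivalence the paper outsources to the cited theorem rather than proving.
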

We refer to the Main Theorem in \cite{Swenson} for the equivalence of this characterisation with other possible definitions. 

We use the following property of $\CAT$-spaces (Theorem 5.1 in \cite{NSpak}).
\begin{lemma}\label{definition: strong hyperbolicity}
    There exist constants $L,R_0>0$ such that for all $x,y,z,t\in X$, we have that $d(x,y)+d(z,t)-d(x,z)-d(y,t)=: R\geq R_0$ implies that \[\abs{d(x,y)+d(z,t)-d(x,t)-d(z,y)}\leq Le^{- R}.\]
\end{lemma}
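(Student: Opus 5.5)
The plan is to use the synthetic Ptolemy-type inequalities that hold in $\CAT$ spaces and convert them into an additive estimate, exploiting the fact that the gap $R$ is large. Write
\[
A:=d(x,y)+d(z,t),\qquad B:=d(x,z)+d(y,t),\qquad C:=d(x,t)+d(z,y),
\]
so the hypothesis is $A-B=R\ge R_0$ and we must show $\abs{A-C}\le Le^{-R}$.

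First, every $\CAT$ space is in particular $\delta$-hyperbolic with a universal $\delta$ (that of $\mathbf{H}^2$). The four-point condition then says that the two largest of $A,B,C$ differ by at most $2\delta$. Since $A-B=R\ge R_0>2\delta$, the two largest must be $A$ and $C$, so already $\abs{A-C}\le 2\delta$ and $C-B\ge R-2\delta$.

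Second, to sharpen this to an exponential bound, I would use the hyperbolic Ptolemy inequality, valid in $\CAT$ spaces (via four-point comparison with $\mathbf{H}^2$, i.e. Reshetnyak majorization of the quadrilateral $x,y,t,z$):
\[
\sinh\tfrac{d(x,y)}2\sinh\tfrac{d(z,t)}2\le \sinh\tfrac{d(x,z)}2\sinh\tfrac{d(y,t)}2+\sinh\tfrac{d(x,t)}2\sinh\tfrac{d(z,y)}2,
\]
together with the analogous inequality obtained by permuting the roles of the three pairings. Using the product identity
\[
\sinh\tfrac a2\sinh\tfrac b2=\tfrac12\left(\cosh\tfrac{a+b}2-\cosh\tfrac{a-b}2\right),
\]
each product becomes $\tfrac14 e^{A/2}$ (respectively $e^{B/2}$, $e^{C/2}$) up to correction terms. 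Dividing by $e^{A/2}$ would give one-sided bounds of the form
\[
C-A\le O(e^{-R/2}+\ldots),\qquad A-C\le 2\log\left(1+e^{-R/2}+\ldots\right),
\]
after first disposing of the degenerate case in which some side distance is bounded. In that degenerate case, $R\ge R_0$ large together with the triangle inequality forces a rigid configuration that can be treated directly.

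The main obstacle is the rate. The naive Ptolemy argument only yields $e^{-R/2}$, because the $B$-term contributes $e^{(B-A)/2}$. To reach $e^{-R}$ I would pass to the model: realize $x,y,z,t$ by a comparison quadrilateral in $\mathbf{H}^2$, or better $\mathbf{H}^3$ if needed, with the relevant side lengths preserved and the diagonals controlled by majorization. In $\mathbf{H}^n$ the quantities $e^{A},e^{B},e^{C}$ are, up to $1+O(e^{-\min d})$ factors, proportional to the terms of the exact cross-ratio identity for four boundary points, $[\,\cdot\,]_{AB}+[\,\cdot\,]_{CB}=1$ type. That identity gives
\[
e^{C-A}=1-e^{B-A}=1-e^{-R}
\]
exactly in the ideal limit, hence $\abs{A-C}\le Le^{-R}$ there. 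For finite points one then checks that the errors from replacing points by their shadows at infinity are absorbed. If this matching of error terms fails, I would fall back on the approach of Theorem 5.1 in \cite{NSpak}: projecting onto a geodesic tree approximation and using the exponential thinness of $\CAT$ triangles to control each Busemann-function discrepancy by $e^{-R}$.
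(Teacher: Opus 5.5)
\textbf{Gap: the upgrade from $e^{-R/2}$ to $e^{-R}$ fails, and cannot be repaired.} With $R$ defined through full distances, as in the statement, the rate $e^{-R}$ is false already in $\mathbf{H}^2$.

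Your ideal-limit identity is off by a factor of $2$ in the exponents. Since $e^{-(a|b)_o}=\sin(\theta_{ab}/2)$ is half the chordal distance on $\partial\mathbf{H}^2$, the quantities $e^{A/2},e^{B/2},e^{C/2}$ are asymptotically proportional to the chord products $|xy||zt|$, $|xz||yt|$, $|xt||zy|$. Ptolemy's equality for concyclic points then gives $e^{A/2}=e^{B/2}+e^{C/2}$, i.e.\ $e^{(C-A)/2}=1-e^{-R/2}$, not $e^{C-A}=1-e^{-R}$.

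Concretely, put $x,z,y,t$ at distance $T$ from $o$ in directions $0,\phi,\pi,\pi+\phi$, and use $d=2T+2\log\sin(\theta/2)+o(1)$. As $T\to\infty$ one gets $R\to-4\log\sin(\phi/2)$ and $A-C\to-2\log\cos^2(\phi/2)=-2\log(1-e^{-R/2})\geq 2e^{-R/2}$. This exceeds $Le^{-R}$ once $\phi$ is small. So the naive Ptolemy rate you obtained is already optimal.

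The fallback does not help either. The paper's proof is only the citation of Theorem 5.1 in \cite{NSpak}. Strong hyperbolicity of $\CAT$ spaces with parameter $1$ (optimal for $\mathbf{H}^2$) reads, in distance form, $e^{A/2}\leq e^{B/2}+e^{C/2}$ together with its permutations. That again yields $e^{-R/2}$ and nothing better, so the lemma as printed carries an exponent slip.

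\textbf{What is true, and what you have essentially proved.} The correct statement is $\abs{A-C}\leq Le^{-R/2}$, and it is all the paper actually uses. In the proof of Proposition \ref{Proposition: Estimating Lengths} the lemma is invoked in Gromov-product form, where $[b,c]_h-[a,b]_h$ equals $R/2$ and $\abs{[a,c]_h-[a,b]_h}=\frac12\abs{A-C}$. Proposition \ref{proposition: displacements and the Cayley graph} only needs some exponential rate.

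Your second step proves this corrected statement, and there is no degenerate case to handle. The triangle inequality gives $d(x,y),d(z,t),d(x,t),d(z,y)\geq R/2$. Write $\sinh(d/2)=\frac12e^{d/2}(1-e^{-d})$ and bound every factor $1-e^{-d}$ on the right-hand sides by $1$; that is exactly where the possibly small $d(x,z)$ and $d(y,t)$ occur. The Ptolemy inequalities with $A$, respectively $C$, on the left then give $e^{(C-A)/2}\geq1-3e^{-R/2}$ and $e^{(C-A)/2}(1-2e^{-R/2})\leq 1+e^{-R/2}$. Hence $\abs{A-C}=O(e^{-R/2})$ once $R\geq R_0$.

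You should state and prove that version and drop the cross-ratio step.
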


\begin{proposition}\label{proposition: displacements and the Cayley graph} 
    Consider for $a\in {\Gamma_0}$ the function
    \[ \Psi_a: \Gamma\to \mathbb{R}: g\mapsto d(g\cdot x_0, x_0)-d(g\cdot x_0,a\cdot x_0).\]
    There exist constants $B>0$, $0<\theta<1$ for which 
    \[ \abs{\Psi_a(g)-\Psi_a(h)} \leq B\theta^{(g,h)_{e}}\]
    for all $g,h\in \Gamma$, where $(\cdot,\cdot)_\cdot$ is the Gromov product on the Cayley graph $(\Gamma,{\Gamma_0})$ with respect to the word metric. 
\end{proposition}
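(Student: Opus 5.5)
We would apply Lemma \ref{definition: strong hyperbolicity} to the quadruple $x=g\cdot x_0$, $y=x_0$, $z=h\cdot x_0$, $t=a\cdot x_0$, and then turn its exponential error into a bound in terms of the Cayley graph Gromov product $(g,h)_e$ using the quasi-isometry $\Gamma\to X$ from convex cocompactness. To begin, write
\[
\Psi_a(g)-\Psi_a(h) = d(g x_0,x_0)+d(hx_0,ax_0) - d(gx_0,ax_0)-d(hx_0,x_0).
\]
This is exactly the expression $d(x,y)+d(z,t)-d(x,t)-d(z,y)$ in Lemma \ref{definition: strong hyperbolicity}. It remains to check the hypothesis on
\[
R = d(x,y)+d(z,t)-d(x,z)-d(y,t) = d(gx_0,x_0)+d(hx_0,ax_0)-d(gx_0,hx_0)-d(x_0,ax_0).
\]
Since $d(hx_0,ax_0)\ge d(hx_0,x_0)-d(x_0,ax_0)$, we get
\[
R\ge 2(gx_0,hx_0)_{x_0} - 2d(x_0,ax_0).
\]
Let $D:=\max_{a\in\Gamma_0} d(x_0,a x_0)$. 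Then $R\ge 2(gx_0,hx_0)_{x_0}-2D$.

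Next we compare Gromov products. The orbit map $\Gamma\to X$, $\gamma\mapsto\gamma x_0$, is a $(\lambda,c)$-quasi-isometry by the characterisation of convex cocompactness quoted above. Both spaces are hyperbolic: the group is word-hyperbolic, and a CAT(-1) space is hyperbolic. So \eqref{equation: quasi-invariance} applies and gives
\[
(gx_0,hx_0)_{x_0}\ge \lambda^{-1}(g,h)_e - B'.
\]
Hence $R\ge 2\lambda^{-1}(g,h)_e - 2B'-2D$.

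We then split into two cases.
\begin{itemize}
\item If $(g,h)_e$ is large enough that this lower bound is at least $R_0$, Lemma \ref{definition: strong hyperbolicity} gives
\[
\abs{\Psi_a(g)-\Psi_a(h)}\le Le^{-R}\le Le^{2B'+2D}e^{-2(g,h)_e/\lambda}.
\]
This is of the required form with $\theta=e^{-2/\lambda}$.
\item Otherwise $(g,h)_e$ is bounded by some constant $K$. By the triangle inequality $\abs{\Psi_a(\gamma)}\le d(x_0,ax_0)\le D$ for every $\gamma$, so $\abs{\Psi_a(g)-\Psi_a(h)}\le 2D\le 2D\theta^{-K}\theta^{(g,h)_e}$.
\end{itemize}
Taking $B$ to be the larger of the two constants, and then the maximum over the finitely many $a\in\Gamma_0$, completes the argument.

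The main obstacle is the lower bound on $R$, namely making sure the correct pairing of the four points appears. Lemma \ref{definition: strong hyperbolicity} requires $R$ to be large, and $R$ must be controlled by $(g,h)_e$ rather than by the word lengths of $g$ and $h$ separately. The estimate $d(hx_0,ax_0)\ge d(hx_0,x_0)-D$ handles this. The constants in \eqref{equation: quasi-invariance} only involve the hyperbolicity constants, and these are available because CAT(-1) spaces are Gromov hyperbolic.
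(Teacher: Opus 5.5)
Your proof is correct and follows the paper's own argument: the same quadruple $g\cdot x_0, x_0, h\cdot x_0, a\cdot x_0$ in Lemma \ref{definition: strong hyperbolicity}, the same triangle-inequality lower bound $R\geq 2\langle g\cdot x_0,h\cdot x_0\rangle_{x_0}-2d(x_0,a\cdot x_0)$, and then \eqref{equation: quasi-invariance}. You also spell out two points the paper leaves implicit: the case where $(g,h)_e$ is too small for $R\geq R_0$, handled via the uniform bound $\abs{\Psi_a}\leq d(x_0,a\cdot x_0)$, and the uniformity of the constants over the finitely many $a\in\Gamma_0$.
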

\begin{proof}
Let $d_{{\Gamma_0}}$ denote the word metric on $\Gamma$. Then the map $(\Gamma,d_{{\Gamma_0}})\to X:g\mapsto g\cdot x_0$ is a $(\lambda,c)$-quasi-isometry for some $\lambda, c$. If we let $(\cdot,\cdot)_{\cdot}$ and $\langle \cdot,\cdot\rangle_\cdot$ denote the Gromov product on $(\Gamma,d_{{\Gamma_0}})$ and $X$ respectively, we see by the triangle inequality that $ d(h\cdot x_0 ,a\cdot x_0)+d(  x_0,a\cdot x_0)\geq d(h\cdot x_0,  x_0)$, so by \eqref{equation: quasi-invariance} we have that 
    \begin{equation*}
        \begin{split}
            &d(g\cdot x_0,  x_0)+d(h \cdot x_0,a\cdot x_0)-d(g\cdot x_0,h\cdot x_0)-d(  x_0,a\cdot x_0) \geq \\ &2\langle g\cdot x_0,h\cdot x_0 \rangle_{x_0}-2d(x_0,a\cdot x_0)\geq 2(\lambda^{-1}(g,h)_e- B)
        \end{split}
    \end{equation*}
    for some uniformly bounded constant $B>0$. We obtain this proposition using Lemma \ref{definition: strong hyperbolicity}.
\end{proof}
As a direct application, we obtain our potential.
\begin{proposition}\label{proposition: existence of roof function}
    There exists some $0<\alpha<1$ and an $\alpha$-Hölder continuous function $r:\Sigma\to \mathbb{R}$ such that if $v=(*,v_1,v_2,\ldots,v_l,0,0,\ldots)\in \Sigma^{term}$, then
    \[r^l(v):= \sum_{k=0}^{l-1}r(\sigma^k v)  = d\left(\overline{\lambda\left(*,v_1,\cdots,v_l\right)}\cdot x_0 ,  x_0\right).\]
\end{proposition}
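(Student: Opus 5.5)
The plan is to define $r$ on $\Sigma$ as a telescoping increment of displacement, using Proposition \ref{proposition: displacements and the Cayley graph} for regularity. For $v=(v_0,v_1,\ldots)\in\Sigma$ with $v_0\neq 0$, the edge $(v_0,v_1)$ either has a label $a:=\lambda((v_0,v_1))\in S$ (when $v_1\neq 0$) or leads to $0$. Reading the word backwards gives
\[
d(\overline{w_1\cdots w_l}\cdot x_0,x_0)=\sum_{k=0}^{l-1}\Big(d(\overline{w_{k+1}\cdots w_l}\cdot x_0,x_0)-d(\overline{w_{k+2}\cdots w_l}\cdot x_0,x_0)\Big),
\]
using $d(\overline{w_{k+1}}\,g\cdot x_0,x_0)=d(g\cdot x_0,\overline{w_{k+1}}^{-1}\cdot x_0)$. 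So I would set, for terminating $v$ with $v_1\ne0$,
\[
r(v):=d(\overline{\lambda(\sigma v)}\cdot x_0,\overline{a}^{-1}x_0)\ldots
\]
More cleanly, set $g_v:=\overline{\lambda(v_1,\ldots,v_m)}$ for the terminating tail, and
\[
r(v):=d(\overline{a}\,g_v\cdot x_0,x_0)-d(g_v\cdot x_0,x_0)=-\Psi_{\overline{a}^{-1}}(g_v),
\]
with $r(v)=0$ when $v_0=0$ or $v_1=0$. The telescoping sum then gives exactly $r^l(v)=d(\overline{\lambda(*,v_1,\ldots,v_l)}\cdot x_0,x_0)$ for $v=(*,v_1,\ldots,v_l,0,\ldots)$. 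Since $\Gamma_0$ is symmetric, $\overline a^{-1}\in\Gamma_0$, so $\Psi_{\overline a^{-1}}$ is one of the functions in Proposition \ref{proposition: displacements and the Cayley graph}.

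The key step is Hölder continuity on $\Sigma^{term}$, followed by extension to all of $\Sigma$. Take $v,v'$ terminating with $d_\Sigma(v,v')=2^{-N}$, $N\ge2$. They then share $v_0,v_1$, hence the same $a$, and the tails agree on $v_1,\ldots,v_{N-1}$. Write $g_v=\overline{u}\,\overline{x}$ and $g_{v'}=\overline u\,\overline{x'}$, where $u=\lambda(v_1,\ldots,v_{N-1})$ is the common prefix. By Lemma \ref{lemma: all words are geodesic}, both $\lambda(v_1,\ldots)$ and $\lambda(v'_1,\ldots)$ are geodesic words, so $\overline u$ lies on Cayley-graph geodesics from $e$ to $g_v$ and from $e$ to $g_{v'}$. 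Hence $(g_v,g_{v'})_e\ge |u|=N-2$. Proposition \ref{proposition: displacements and the Cayley graph} then gives $|r(v)-r(v')|\le B\theta^{N-2}=B'(2^{-N})^{\alpha}$ with $\alpha=\log_2(1/\theta)$, after shrinking $\theta$ if necessary so that $\alpha<1$.

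I must also handle the cases where one of $v,v'$ hits $0$ early, or where $v_0=0$ or $v_1=0$. If $v_1=0$, then $N\ge 2$ forces $v'_1=0$ too, so both values vanish. If one tail ends at position $m<N$, the other must too, since the sequences agree there, so the two points coincide up to $N$ and are treated as above. Finally, $\Sigma^{term}$ is dense in $\Sigma$: truncate at any position by jumping to $0$, which is allowed from every vertex. A uniformly $\alpha$-Hölder function on a dense subset therefore extends uniquely to an $\alpha$-Hölder $r$ on $\Sigma$, and the extension agrees with the formula on $\Sigma^{term}$.

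The main obstacle is the Gromov product lower bound $(g_v,g_{v'})_e\gtrsim N$. It needs the geodesicity of every word produced by the shift, not only those starting at $*$, and this is exactly what Lemma \ref{lemma: all words are geodesic} provides, since tails $\sigma v$ do not start at $*$. A small bookkeeping point is the convention $\overline{\emptyset}=e$ for the edge into $0$, which makes $r$ vanish there consistently.
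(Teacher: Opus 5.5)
Your proposal is correct and is essentially the paper's proof: the same telescoping definition of $r$ as a displacement increment, H\"older continuity on $\Sigma^{term}$ from Proposition~\ref{proposition: displacements and the Cayley graph} combined with the Gromov-product lower bound supplied by Lemma~\ref{lemma: all words are geodesic}, and extension to $\Sigma$ by density. You apply the Gromov-product bound to the tails $g_v,g_{v'}$ with $\Psi_{\overline{a}^{-1}}$, whereas the paper applies it to the full words $\overline{\lambda(v)}$ (implicitly with $\Psi_{\overline{a}}$), which is equivalent by $\Gamma$-invariance of $d$.

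There is one small slip: to get $\alpha=\log_2(1/\theta)<1$ you must \emph{enlarge} $\theta$ toward $1$, not shrink it. This is harmless, since $B\theta^n$ remains a valid bound for any larger $\theta<1$. Also, the cases $N\in\{0,1\}$ follow from the uniform bound $\abs{r}\le\max_{b\in\Gamma_0}d(b\cdot x_0,x_0)$.
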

\begin{proof}
We define the function $r$ initially on terminating sequences by setting $r(v_0,0,0,\ldots)=0$ for all $v_0\in V\cup\{0\}$,  $r(v_0,v_1,0,\ldots)=d(\overline{\lambda(v_0,v_1)}\cdot x_0, x_0)$ for all $(v_0,v_1)\in E$, and
     \begin{equation*}
        \begin{split}
            r(v_0,v_1,\ldots,v_l,0,0,\ldots)=d\left(\overline{\lambda(v_0,v_1)}\cdots \overline{\lambda(v_{l-1},v_l)} \cdot x_0, x_0\right)\\-d\left(\overline{\lambda(v_1,v_2)}\cdots \overline{\lambda(v_{l-1},v_l)}x_0 ,x_0 \right)
        \end{split}
    \end{equation*}
for all $l\geq 2$ and $(v_0,v_1,\ldots,v_l)\in \mathcal{P}(\mathcal{G})$. Note that if we prove $\alpha$-Hölder continuity of $r$ restricted to $\Sigma^{term}$, we immediately obtain that it extends to an $\alpha$-Hölder continuous function on $\Sigma$. By Proposition \ref{proposition: displacements and the Cayley graph}, it suffices to show that if $v=(v_0,v_1,v_2,\ldots,v_l,0,0,\ldots)\in \Sigma^{term}$, and $v'=(v'_0,v'_1,v'_2,\ldots,v'_{l'},0,0,\ldots)$ $\in \Sigma^{term}$ satisfy $v_0=v'_0,\cdots,v_p=v'_p\neq 0$ for some $p\in \mathbb{N}$, then we have that the associated Gromov product satisfies
\[\left(\overline{\lambda(v_0,v_1)\cdots \lambda(v_{l-1},v_l)},\overline{\lambda(v'_0,v'_1)\cdots \lambda(v'_{l'-1},v'_{l'})}\right)_e\geq p,\]
which follows immediately from Lemma \ref{lemma: all words are geodesic} and noting that $2(g,h)_e=\abs{g}_{{\Gamma_0}}+\abs{h}_{{\Gamma_0}}-\abs{gh^{-1}}_{{\Gamma_0}}$ for all $g,h\in \Gamma$.
\end{proof}
\section{Thermodynamic Formalism for the Redfern Coding}
The goal of this section is to show that the quasiconvexity of some subgroup $H$ ensures the transfer operator associated with a counting problem for right-cosets in $H\backslash\Gamma$ is sufficiently regular. 

\subsection{Transfer operators and Spectral Decomposition.}
We explain in this subsection how the point spectrum of a transfer operator associated to a subshift of finite type decomposes into point spectra associated with transfer operators over irreducible subshifts. Let us first recall some terms from the spectral theory of operators on Banach spaces. Let $B$ be a Banach space and let $T:B\to B$ be a bounded linear operator. Denote the identity operator by $I$. We define the \textbf{spectrum} $\mathrm{spec}(T)$ to be the set of all $\lambda$ for which
\[(T-\lambda I)\]
does not have an inverse. We follow Browder \cite{Browder1961} in defining the \textbf{essential spectrum} $\mathrm{ess}(T)$ to be the set of $\lambda \in \mathrm{spec}(T)$ for which 
\begin{itemize}
    \item $\lambda$ is a limit point of $\mathrm{spec}(T)$, or
    \item $(T- \lambda I)B$ is not closed in $B$, or
    \item $\bigcup_{r=1}^\infty \mathrm{ker}(T-\lambda I)^r$ is infinite dimensional. 
\end{itemize}
We define the \textbf{spectral radius} of $T$ to be $\rho(T):= \sup\{|\lambda|:\lambda \in \mathrm{spec}(T)\}$ and the \textbf{essential spectral radius} of $T$ to be $\rho_e(T):= \sup\{|\lambda|:\lambda \in \mathrm{ess}(T)\}$. 

In what follows, let $(\mathcal{G},*,\lambda)$ be a Redfern coding associated to $\Gamma$ and some quasiconvex subgroup $H$ of $\Gamma$. Let $\Sigma$ be the associated subshift of finite type as defined in Subsection \ref{subsection:Subshifts of finite type}. Let $C^{\alpha}(\Sigma)$ denote the Banach space of $\alpha$-Hölder continuous functions on $\Sigma$. This Banach space is equipped with the natural norm 
\[ \left\| f\right\|_{C^{\alpha}} := \| f\|_\infty + |f|_\alpha,\]
where $\|\cdot\|_\infty$ is the standard supremum norm and 
\[|f|_\alpha = \sup\left\{\frac{|f(v)-f(v')|}{d_{\Sigma}(v,v')^{\alpha}}: v,v'\in \Sigma, v\neq v'\right\}.\]

Since the function $r$ in Proposition \ref{proposition: existence of roof function} is $\alpha$-Hölder continuous, the following is well-defined.
\begin{definition}\label{definition: transfer operator}
We define for $s\in \mathbb{C}$ the transfer operator
\[\mathcal{L}_s: C^{\alpha}(\Sigma)\to C^{\alpha}(\Sigma): f\mapsto \mathcal{L}_s(f),\]
where
\[\mathcal{L}_s(f)(x) = \sum_{\substack{\sigma y = x\\y\neq (0,0,\ldots)}} e^{-sr(y)}f(y). \]
\end{definition}
\begin{remark}
    The condition $y\neq (0,0,\ldots)$ changes the value of $\mathcal{L}_s(f)(x)$ only at $x=(0,0,\ldots)$. The only effect on the spectrum is to exclude the eigenvalue $1$ associated with the characteristic function on the singleton $\{(0,0,\ldots)\}$ (cf. \cite{Pollicott1998ComparisonTA}).
\end{remark}
Let us assume that we have already assigned an order to the vertex set $V\cup \{0\}$ so that the transition matrix $A$ is of the form \eqref{equation: Bs}. We define an auxiliary shift space $\widehat{\Sigma}\subset \Sigma$ consisting of all sequences $(v_1,v_2,\ldots)\in \left(V\cup \{0\}\right)^\mathbb{N}$ such that $\widehat{A}(v_i,v_{i+1})=1$, where $\widehat{A}$ is the transition matrix
\[\begin{pmatrix}
        B_{11} & 0 & \cdots &0\\
        0 & B_{22} & \cdots &0\\
        \vdots & \vdots & \ddots & \vdots\\
        0 & 0 &\cdots & B_{kk}
     \end{pmatrix}\]
and define $\widehat{\mathcal{L}}_s:C^\alpha(\widehat{\Sigma})\to  C^\alpha(\widehat{\Sigma})$ by
\[ \widehat{\mathcal{L}}_s(f)(x)= \sum_{\substack{\sigma y = x\\ y\in \widehat{\Sigma}}} e^{-sr(y)}f(y). \] 
The spectra of $\mathcal{L}_s$ and $\widehat{\mathcal{L}}_s$ are related by the following lemma
\begin{lemma}[Lemma 2 in \cite{Pollicott1998ComparisonTA}]\label{lemma: quasicompactness}
    Let $s = a+it$. The operators $\mathcal{L}_s$ and $\hat{\mathcal{L}}_s$ are both quasi-compact, with their essential spectral radii satisfying
    \[ \rho_e(\mathcal{L}_s)\leq 2^{-\alpha}\rho(\mathcal{L}_a) =2^{-\alpha}\rho(\widehat{\mathcal{L}}_a)\geq  \rho_e(\widehat{\mathcal{L}}_s).\]
    Furthermore the isolated eigenvalues of both operators coincide in algebraic multiplicity.
\end{lemma}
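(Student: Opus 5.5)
The lemma is a version of Lemma 2 in \cite{Pollicott1998ComparisonTA}. The plan is to prove quasi-compactness through a Lasota--Yorke (Doeblin--Fortet) inequality, and then to compare isolated eigenvalues using the block lower-triangular structure \eqref{equation: Bs}.

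\textbf{Lasota--Yorke inequality.} For $f\in C^\alpha(\Sigma)$ and $n\ge 1$, we write
\[\mathcal{L}_s^n f(x)=\sum_{\sigma^n y=x} e^{-s r^n(y)} f(y),\]
where the excluded preimage $(0,0,\ldots)$ is handled as in the Remark. Let $x,x'$ agree in the first $N$ coordinates. Pair their $n$-th preimages along the same admissible word, so that each pair agrees in the first $n+N$ coordinates. Using the Hölder continuity of $r$ from Proposition \ref{proposition: existence of roof function}, the sum of $r$ along an orbit has bounded distortion, and $|e^{-sr^n(y)}-e^{-sr^n(y')}|\le C_s|e^{-ar^n(y)}|\,d(x,x')^\alpha$. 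This gives
\[|\mathcal{L}_s^n f|_\alpha\le C\,2^{-\alpha n}\,\|\mathcal{L}_a^n\|_\infty |f|_\alpha + C_s\|\mathcal{L}_a^n 1\|_\infty\|f\|_\infty.\]
Since $\|\mathcal{L}_a^n 1\|_\infty^{1/n}\to\rho(\mathcal{L}_a)$ (positivity, via the spectral radius formula on $C^0$ and $C^\alpha$), Hennion's theorem applies: the unit ball of $C^\alpha$ is compact in $C^0$. Hence $\rho_e(\mathcal{L}_s)\le 2^{-\alpha}\rho(\mathcal{L}_a)$. The identical argument on $\widehat\Sigma$ gives $\rho_e(\widehat{\mathcal{L}}_s)\le 2^{-\alpha}\rho(\widehat{\mathcal{L}}_a)$.

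\textbf{Equality of spectral radii.} For $\rho(\mathcal{L}_a)=\rho(\widehat{\mathcal{L}}_a)$, we note that both are given by the pressure $e^{P(-ar)}$, namely the exponential growth rate of $\sum_{\sigma^n y=x}e^{-ar^n(y)}$. A path of length $n$ in $\Sigma$ passes through at most $k$ components, changing component at most $k-1$ times. So the weighted count of length-$n$ paths is bounded by a polynomial in $n$ times $\max_l \rho(\mathcal{L}_{a}|_{V_l})^n$, up to constants coming from the bounded distortion. This forces equal growth rates. The one-point component $\{0\}$ and the exclusion of $(0,0,\ldots)$ need a short check.

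\textbf{Matching eigenvalues.} Order $C^\alpha(\Sigma)$ by the filtration given by supports on unions of components that are upward closed in $\prec$. Because $A$ is block lower triangular, $\mathcal{L}_s$ preserves this filtration. Its action on each successive quotient is conjugate to the diagonal block operator, i.e.\ to the restriction of $\widehat{\mathcal{L}}_s$ to $C^\alpha$ of the $l$-th component; this is the step I expect to be the main obstacle. For $|\lambda|$ larger than both essential radii, the spectral projections (Riesz integrals) have finite rank, and the generalized eigenspace dimension is additive along a finite invariant filtration. Hence the algebraic multiplicity of $\lambda$ for $\mathcal{L}_s$ equals the sum over $l$ of its multiplicities for the blocks, which is its multiplicity for $\widehat{\mathcal{L}}_s$. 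The delicate points are making the quotient identification precise in Hölder norms, and handling sequences that move between components. For the latter I would use the cylinder structure: functions supported on cylinders $[v_0]$ with $v_0\in V_l$ are a complemented subspace, and restriction to the component subshift is bounded.
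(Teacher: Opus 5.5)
Your Lasota--Yorke/Hennion argument for the essential radii is sound, and so is the growth-rate comparison of the spectral radii, up to the $\{0\}$ issue below. The triangular filtration is a reasonable alternative to simply citing \cite{Pollicott1998ComparisonTA}, which is all the paper does. The gap is the step you flag, and the claim you make there is false. The $l$-th successive quotient of your filtration is $C^\alpha(\Sigma_l^+)$, where $\Sigma_l^+=\{x\in\Sigma: x_0\in V_l\}$ consists of all sequences that \emph{start} in $V_l$, including those that later leave it. The induced operator is $Q_lf=\chi_{[V_l]}\mathcal{L}_sf$. Restriction $f\mapsto \rstr{f}{\Sigma_l}$ does intertwine $Q_l$ with $\widehat{\mathcal{L}}_{s,l}$ and is onto. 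However, its kernel $K_l=\{f:\rstr{f}{\Sigma_l}=0\}$ is an infinite-dimensional $Q_l$-invariant subspace with spectrum of its own, so there is no conjugacy. For example, let $V_l=\{v\}$ carry a loop, with the only other edge out of $v$ going to $0$. Then $C^\alpha(\Sigma_l)$ is one-dimensional. On $K_l$, however, $Q_l$ is the weighted backward shift $f(v^n0^\infty)\mapsto e^{-sr(v^{n+1}0^\infty)}f(v^{n+1}0^\infty)$, which has a whole disc of eigenvalues. Complementedness of the cylinder subspaces and boundedness of restriction, which you propose to use, do not touch this issue.

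What is missing is an estimate confining the spectrum of $Q_l$ on $K_l$ to the essential disc. Take $f\in K_l$. Every preimage $y=wx$ appearing in $Q_l^nf(x)$ has $y_0,\ldots,y_n\in V_l$, so by irreducibility $y$ lies within $2^{-(n+1)}$ of $\Sigma_l$, where $f$ vanishes. Hence
\[\|Q_l^nf\|_\infty\le 2^{-\alpha n}\|\mathcal{L}_a^n1\|_\infty\,|f|_\alpha .\]
Iterating $f=\lambda^{-n}Q_l^nf$ shows that $K_l$ contains no eigenvector with $\abs{\lambda}>2^{-\alpha}\rho(\mathcal{L}_a)$. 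Combined with Hennion's bound for the restriction of $Q_l$ to $K_l$, this gives spectral radius at most $2^{-\alpha}\rho(\mathcal{L}_a)$ there. Now apply your additivity argument once more to $0\to K_l\to C^\alpha(\Sigma_l^+)\to C^\alpha(\Sigma_l)\to 0$; surjectivity comes from a $1$-Lipschitz nearest-point retraction onto $\Sigma_l$, available since $d_\Sigma$ is an ultrametric. This yields equality of algebraic multiplicities outside that disc.

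The \enquote{short check} at $\{0\}$ is really a convention. $\widehat{\mathcal{L}}_s$ as defined keeps the preimage $(0,0,\ldots)$, and $r(0,0,\ldots)=0$, so its $\{0\}$-block contributes the eigenvalue $1$ for every $s$. The same exclusion must therefore be imposed on $\widehat{\mathcal{L}}_s$, both for $\rho(\mathcal{L}_a)=\rho(\widehat{\mathcal{L}}_a)$ to hold when $a>\delta$ and for the multiplicities at $1$ to agree.

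Finally, there is a shortcut that avoids the filtration altogether and explains why only algebraic multiplicities are claimed. A periodic orbit cannot leave an irreducible component, so $\Sigma$ and $\widehat\Sigma$ have the same periodic orbits and hence the same dynamical zeta function. By Ruelle's Fredholm theory, its poles in $\abs{z}<(2^{-\alpha}\rho(\mathcal{L}_a))^{-1}$ are the reciprocals of the isolated eigenvalues, with orders equal to algebraic multiplicities.
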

\begin{remark}\label{remark: geometric multiplicities may not coincide}
    The proof of Lemma 2 in \cite{Pollicott1998ComparisonTA} does not imply that the geometric multiplicities coincide. 
\end{remark}
The advantage of considering a transfer operator on $C^\alpha(\widehat{\Sigma})$ is that we obtain a natural decomposition $C^\alpha(\widehat{\Sigma})= C^\alpha(\Sigma_1)\times \cdots \times C^\alpha(\Sigma_k)$, where we recall that $\Sigma_l$ is the shift space associated to the irreducible component $B_{ll}$ for all $l$. If we define $\hat{\mathcal{L}}_{s,l}$ to be the operator $ \hat{\mathcal{L}}_s$ restricted to $C^\alpha(\Sigma_l)$, then we obtain that 
\begin{equation}\label{equation: decomposing the spectrum}
    \mathrm{spec}\left(\hat{\mathcal{L}}_s \right )=\bigcup_{l=1}^k \mathrm{spec}\left(\hat{\mathcal{L}}_{s,l} \right ).
\end{equation}
Irreducibility of $\Sigma_l$ implies the following. 
\begin{proposition}[Ruelle-Perron-Frobenius theorem]\label{Proposition: RPF}
Let $a\in\mathbb{R}$. For each $l\in \{1,\ldots,k\}$, the transfer operator $\hat{\mathcal{L}}_{a,l}$ has a simple, maximal positive eigenvalue $\lambda_{max}^{(l)}(a)$. Furthermore, there exists an $N_l\in \mathbb{N}$ such that the only eigenvalues on the circle $\{e^{i\phi}\lambda_{max}^{(l)}(a): \phi\in \mathbb{R}\}$ are simple eigenvalues with value $e^{ i2\pi p/N_l}\lambda_{max}^{(l)}(a)$ for $p\in \{0,1,\ldots, N_l-1\}$, with the rest of the spectrum contained in a disk of smaller radius. 
\end{proposition}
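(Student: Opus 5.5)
This is the classical Ruelle--Perron--Frobenius theorem applied to the irreducible subshift $\Sigma_l$ with the real potential $-ar$. I would reduce it to the standard statement for topologically mixing subshifts, passing through the cyclic decomposition of an irreducible matrix.

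\textbf{Step 1: the period.} Let $N_l$ be the period of $B_{ll}$, i.e. the gcd of the lengths of closed loops in $V_l$. By standard Perron--Frobenius theory (Seneta, Section 1.3), $V_l$ splits into cyclic classes $C_0,\dots,C_{N_l-1}$ such that edges go only from $C_j$ to $C_{j+1 \bmod N_l}$. Moreover $B_{ll}^{N_l}$, restricted to each $C_j$, is aperiodic. Accordingly $\Sigma_l=\bigsqcup_j \Sigma_{l,j}$, where $\Sigma_{l,j}$ consists of the sequences with first symbol in $C_j$. The map $\sigma^{N_l}$ on each $\Sigma_{l,j}$ is a topologically mixing subshift of finite type, after recoding blocks of length $N_l$. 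Degenerate components, such as $\{0\}$ or a single loop, are handled directly: there $\hat{\mathcal L}_{a,l}$ is essentially a weighted permutation, and the statement is a finite-dimensional computation.

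\textbf{Step 2: the mixing case.} Consider the operator $\hat{\mathcal L}_{a,l}^{N_l}$. It preserves each $C^\alpha(\Sigma_{l,j})$ and acts there as the transfer operator of $\sigma^{N_l}$ with the Hölder potential $-a r^{N_l}$. The classical RPF theorem for mixing subshifts (Parry--Pollicott, Theorem 2.2) gives the following on each piece:
\begin{itemize}
\item a simple maximal eigenvalue $e^{P_j}>0$ with a strictly positive eigenfunction;
\item the rest of the spectrum in a strictly smaller disc.
\end{itemize}
For this one uses the quasi-compactness from Lemma \ref{lemma: quasicompactness}, a Lasota--Yorke inequality, and a cone-contraction argument. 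Since $\hat{\mathcal L}_{a,l}$ maps $C^\alpha(\Sigma_{l,j+1})$ to $C^\alpha(\Sigma_{l,j})$ (up to index convention), it intertwines these pieces. Hence all the $e^{P_j}$ coincide; call the common value $\mu$.

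\textbf{Step 3: lift back to $\hat{\mathcal L}_{a,l}$.} Let $\lambda$ be an eigenvalue of $\hat{\mathcal L}_{a,l}$ with $|\lambda|=\mu^{1/N_l}$ and eigenfunction $f$. Then $f$ is an eigenfunction of $\hat{\mathcal L}_{a,l}^{N_l}$ with eigenvalue $\lambda^{N_l}$, and $|\lambda^{N_l}|=\mu$, so $\lambda^{N_l}=\mu$ by the strict spectral gap on each piece. Therefore $\lambda=e^{2\pi i p/N_l}\mu^{1/N_l}$. Conversely, write $f=\sum_j f_j$ with $f_j$ supported on $\Sigma_{l,j}$, and let $h=\sum_j h_j$ be the positive eigenfunction, normalized along the cycle so that $\hat{\mathcal L}_{a,l}h=\lambda_{max}^{(l)}h$ with $\lambda_{max}^{(l)}=\mu^{1/N_l}$. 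Then $f=\sum_j e^{2\pi i pj/N_l}h_j$ is an eigenfunction for $e^{2\pi i p/N_l}\lambda_{max}^{(l)}$.

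\textbf{Step 4: simplicity and the gap.} The eigenspace of $\hat{\mathcal L}_{a,l}^{N_l}$ at $\mu$ has dimension exactly $N_l$, one dimension per piece, and it has no Jordan blocks, since $\mu$ is simple on each piece. Each of the $N_l$ roots of $\mu$ therefore contributes exactly one dimension, so each of these eigenvalues is simple, and positivity of $h$ gives $\lambda_{max}^{(l)}>0$. The remaining spectrum of $\hat{\mathcal L}_{a,l}$ has $N_l$-th powers lying in the smaller disc, which gives the final claim of the proposition.

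The main obstacle is Step 2, the spectral gap for the mixing piece. This step is quoted from the literature rather than reproved: it is exactly where Hölder continuity of $r$ (Proposition \ref{proposition: existence of roof function}) is needed.
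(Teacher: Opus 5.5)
Your proposal is correct and follows the paper's sketch: partition $\Sigma_l$ into the $N_l$ cyclic classes, and build the peripheral eigenfunctions explicitly by twisting the positive eigenfunction with $N_l$-th roots of unity on the classes. The only difference is how the simple positive leading eigenvalue and the spectral gap are obtained. You apply the mixing Ruelle--Perron--Frobenius theorem to $\hat{\mathcal{L}}_{a,l}^{N_l}$ on each class and then count multiplicities via the spectral mapping theorem, whereas the paper simply asserts that the Parry--Pollicott argument for the leading eigenvalue already works under transitivity; your version has the advantage of using the mixing theorem purely as a black box.
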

The proof is essentially contained in Chapters 1 and 2 of \cite{AST_1990__187-188__1_0}. While it is formulated for transfer operators over aperiodic shifts, we note that the proof of the existence of a simple, maximal positive eigenvalue requires only transitivity, and the other eigenfunctions with an eigenvalue of maximal modulus may be explicitly calculated after appropriately partitioning $\Sigma_l$.

We remark that $N_l$ is the greatest common divisor of the lengths of the closed loops in the subgraph of $\tilde{\mathcal{G}}$ spanned by the index set $V_l$. 

The quantity $P_l(-ar):= \log \lambda_{max}^{(l)}(a)$ is called the \textbf{pressure} of the function $-ar$ restricted to $\Sigma_l$. The following follows from well-known results for subshifts of finite type, see Chapters 3 and 4 in \cite{AST_1990__187-188__1_0}.
\begin{lemma}\label{lemma: pressure is zero at some point}
    The functions $\mathbb{R}\ni t\mapsto P_l(-tr)$ are monotonically decreasing and real analytic, and extend to a holomorphic function on a complex neighbourhood of $\mathbb{R}$ in $\mathbb{C}$. Furthermore there exists a real number $a_l\in \mathbb{R}$ such that 
\begin{itemize}
    \item $P_l(-a_lr) = 0$
    \item $\rstr{\frac{d}{dt} P_l(-tr)}{t=a_l}<0$.
\end{itemize}
\end{lemma}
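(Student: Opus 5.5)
We need to prove Lemma \ref{lemma: pressure is zero at some point}: for each irreducible component $\Sigma_l$, the map $t\mapsto P_l(-tr)$ is monotonically decreasing, real analytic, extends holomorphically to a complex neighbourhood of $\mathbb{R}$, and has a zero $a_l$ at which its derivative is negative.

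Analyticity comes from perturbation theory. By Proposition \ref{Proposition: RPF}, for real $a$ the eigenvalue $\lambda_{max}^{(l)}(a)=e^{P_l(-ar)}$ is simple and isolated in the spectrum of $\widehat{\mathcal{L}}_{a,l}$ on $C^\alpha(\Sigma_l)$. The family $s\mapsto \widehat{\mathcal{L}}_{s,l}$ is holomorphic in $s\in\mathbb{C}$ as a family of bounded operators, since $r$ is Hölder and $e^{-sr}$ expands as a convergent power series in $s$. Analytic perturbation theory (Kato) then extends a simple isolated eigenvalue holomorphically, with nonvanishing value, to a complex neighbourhood of each real point. Taking the logarithm near the positive real axis gives a holomorphic extension of $P_l(-sr)$ to a neighbourhood of $\mathbb{R}$ that is real analytic on $\mathbb{R}$.

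For the derivative, I would use the variational description of pressure from Chapters 3 and 4 of \cite{AST_1990__187-188__1_0}:
\[
\frac{d}{dt}P_l(-tr)=-\int r\,d\mu_t,
\]
where $\mu_t$ is the equilibrium state of $-tr$ on $\Sigma_l$. This reduces everything to showing that $r$ has strictly positive integral against every invariant measure on $\Sigma_l$. It suffices to check periodic orbits, uniformly. Take a periodic orbit $(v_0,\ldots,v_{n-1})$ in $\Sigma_l$ and repeat the loop $m$ times. By Lemma \ref{lemma: all words are geodesic} together with reachability from $*$, the resulting word is geodesic, so its word length is exactly $mn$. Because the action is convex cocompact, the orbit map is a $(\lambda,c)$-quasi-isometry. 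Telescoping the sums defining $r$, using Proposition \ref{proposition: existence of roof function} with a bounded error coming from the prefix path from $*$, gives
\[
r^{mn}\geq \lambda^{-1}mn-c-C .
\]
Dividing by $mn$ and letting $m\to\infty$ shows the periodic average of $r$ is at least $\lambda^{-1}$. Since periodic measures are weak$^*$ dense in the invariant measures of an irreducible SFT, every invariant measure satisfies $\int r\,d\mu\geq\lambda^{-1}>0$. Hence $\frac{d}{dt}P_l(-tr)\leq-\lambda^{-1}<0$ everywhere, which gives strict monotonic decrease.

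This derivative bound also produces the zero, by the intermediate value theorem. If the component contains a loop, then $P_l(0)=\log(\text{spectral radius of }B_{ll})\ge 0$. Since the derivative is at most $-\lambda^{-1}$, the pressure tends to $-\infty$ as $t\to\infty$, so a unique root $a_l\ge0$ exists.

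The main obstacle is the degenerate components. These are trivial components with no loops, for instance $\{*\}$ or the sink $\{0\}$; on such components the pressure is $-\infty$, or $0$ for the sink, where $r$ vanishes. They have to be excluded or treated separately. I would restrict attention to the non-trivial components of $\mathcal{G}$, excluding the $0$-sink because its eigenvalue was removed (Remark after Definition \ref{definition: transfer operator}). On those components the positivity argument above applies.
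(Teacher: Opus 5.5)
Your proof is correct, and it is more complete than the paper's, which gives no argument beyond a citation to Chapters 3 and 4 of Parry--Pollicott.

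\textbf{What the citation covers and what it does not.} The cited theory gives the parts you obtain formally:
\begin{itemize}
\item analyticity, from a simple isolated eigenvalue and Kato;
\item the holomorphic extension to a neighbourhood of $\mathbb{R}$;
\item the formula $\frac{d}{dt}P_l(-tr)=-\int r\,d\mu_t$.
\end{itemize}
However, strict negativity of the derivative and the existence of a zero do \emph{not} follow from that theory for a general H\"older $r$. They need $\int r\,d\mu>0$ for every invariant measure on $\Sigma_l$. The paper never checks this. Your geometric input is exactly the missing ingredient: loops in $\mathcal{G}$ label geodesic words, so the quasi-isometric orbit map forces $r^k$ to grow at least like $\lambda^{-1}k$.

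\textbf{Degenerate components.} Your remark here is a genuine correction to the statement. For a loopless singleton such as $\{*\}$, the restricted operator is zero and has no positive eigenvalue. For the sink $\{0\}$, where $r(\mathbf{0})=0$, the unmodified $\widehat{\mathcal{L}}_s$ acts as the identity for every $s$. There the pressure is identically $0$ with zero derivative. So the lemma, and the paper's RPF proposition, should be read only for components of $V$ that contain a loop.

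\textbf{Bookkeeping in the telescoping step.} Two small points.
\begin{itemize}
\item Your ``bounded error'' must also absorb the comparison between the periodic point $y$ and the terminating sequence $x$ consisting of $m$ copies of the loop followed by zeros. Since $\sigma^k x$ and $\sigma^k y$ agree on their first $mn+1-k$ coordinates, H\"older continuity gives $\sum_{k<mn}\abs{r(\sigma^k y)-r(\sigma^k x)}\leq \abs{r}_\alpha\sum_{j\geq 1}2^{-\alpha j}$. This bound is uniform in $m$ and in the loop.
\item The prefix path from $*$ is unnecessary. By the definition of $r$ in the proof of Proposition~\ref{proposition: existence of roof function}, the identity $r^{l}(v_0,\dots,v_l,0,0,\dots)=d(\overline{\lambda(v_0,\dots,v_l)}\cdot x_0,x_0)$ holds for every starting vertex. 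Lemma~\ref{lemma: all words are geodesic} likewise applies to any path in $\mathcal{P}(\mathcal{G})$.
\end{itemize}

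\textbf{A shortcut.} Apply the same estimate to an arbitrary $x\in\Sigma_l$ and its truncation $(x_0,\dots,x_k,0,0,\dots)$. This gives $r^k\geq\lambda^{-1}k-C'$ uniformly on $\Sigma_l$. Hence $\int r\,d\mu\geq\lambda^{-1}$ for every invariant $\mu$ directly, and you no longer need density of periodic measures.
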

We end this subsection with a well-known lemma that will allow us to analyse the behaviour of the operators $\widehat{\mathcal{L}}_{s,l}$ on vertical lines. Denote for any observable $f:\Sigma\to\mathbb{C}$ its Birkhoff sum by $f^l(x):=\sum_{j=0}^{l-1}f(\sigma^j(x))$.
\begin{lemma}\label{lemma: lattice condition}
    For all $s=a+it\in \mathbb{C}$, we have that $\rho(\widehat{\mathcal{L}}_{s,l})\leq \rho(\widehat{\mathcal{L}}_{\Re(s),l})$. Furthermore, there exists some $t\in \mathbb{R}-\{0\}$ for which $e^{P_l(-ar)}\in \mathrm{spec}(\widehat{\mathcal{L}}_{a+it,l})$ if and only if the restriction $\rstr{r^l}{\Sigma_l}$ of $r$ to $\Sigma_l$ satisfies
    \[\left\{\rstr{r^l}{\Sigma_l}(x):l\in\mathbb{N} \text{ and } \sigma^l(x)=x\right\}\subset b\mathbb{Z},\]
    where $b=\frac{2\pi}{|t|N_l}$ and $t$ is chosen to be of minimal magnitude.
\end{lemma}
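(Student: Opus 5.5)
The plan is to prove both assertions of Lemma \ref{lemma: lattice condition} by the standard Ruelle--Perron--Frobenius argument applied on the irreducible shift $\Sigma_l$, working with the real operator $\widehat{\mathcal{L}}_{a,l}$ and its positive eigenfunction $h_a>0$ from Proposition \ref{Proposition: RPF}.

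\textbf{The first claim.} The domination $\rho(\widehat{\mathcal{L}}_{s,l})\leq\rho(\widehat{\mathcal{L}}_{a,l})$ comes from the pointwise bound $|\widehat{\mathcal{L}}_{s,l}^n f|\leq \widehat{\mathcal{L}}_{a,l}^n|f|$. This gives $\|\widehat{\mathcal{L}}^n_{s,l}f\|_\infty\leq \|\widehat{\mathcal{L}}_{a,l}^n 1\|_\infty\|f\|_\infty$. Combine it with the usual Lasota--Yorke inequality
\[|\widehat{\mathcal{L}}^n_{s,l}f|_\alpha\leq C(1+|t|)\|\widehat{\mathcal{L}}_{a,l}^n1\|_\infty\|f\|_\infty+C2^{-\alpha n}\|\widehat{\mathcal{L}}_{a,l}^n1\|_\infty|f|_\alpha,\]
which follows from Hölder continuity of $r$ (Proposition \ref{proposition: existence of roof function}). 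Taking $n$-th roots and using $\|\widehat{\mathcal{L}}^n_{a,l}1\|_\infty^{1/n}\to e^{P_l(-ar)}$ yields the inequality.

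\textbf{Normalisation.} For the characterisation, replace the potential by the normalised one
\[\phi=-ar+\log h_a-\log h_a\circ\sigma-P_l(-ar).\]
The normalised operator $\mathcal{M}_t f=e^{-P_l(-ar)}h_a^{-1}\widehat{\mathcal{L}}_{a+it,l}(h_af)$ then satisfies $\mathcal{M}_0 1=1$ and has the same spectrum as $e^{-P_l(-ar)}\widehat{\mathcal{L}}_{a+it,l}$.

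\textbf{From an eigenvalue to the lattice condition.} Suppose $e^{P_l(-ar)}$ is an eigenvalue, so $\mathcal{M}_t f=f$ for some $f\neq0$. Then
\[|f|\leq \mathcal{M}_0|f|.\]
Take a point where $|f|$ attains its maximum; iterating, maximality forces $|f|$ to be constant on all preimages, and by irreducibility it is constant on $\Sigma_l$. Equality in the triangle inequality then gives the cohomological equation
\[e^{-itr(y)}f(y)=f(\sigma y) \quad\text{for all } y\in\Sigma_l.\]
Writing $f=e^{iu}$ and summing along a periodic orbit $\sigma^n x=x$ gives $t\,r^n(x)\in2\pi\mathbb{Z}$. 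So the periodic sums lie in $\frac{2\pi}{|t|}\mathbb{Z}$, at least for the lattice spacing coming from this $t$.

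\textbf{Conversely.} If the periodic sums lie in $b\mathbb{Z}$, Livšic's theorem (valid for the irreducible shift $\Sigma_l$, since $r$ is Hölder) gives $r=c+u-u\circ\sigma+b\,m$ with $m$ integer-valued and $u$ Hölder. The constant $c$ can be absorbed by arranging it to lie in $b\mathbb{Z}$ modulo the cyclic-class structure. Then $f=e^{-i(2\pi/b)u}$ is an eigenfunction of $\widehat{\mathcal{L}}_{a+i2\pi/b,l}$ with eigenvalue $e^{P_l(-ar)}$.

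\textbf{Main obstacle.} I expect the hard step to be matching the exact constant $b=\frac{2\pi}{|t|N_l}$, i.e. correctly accounting for the period $N_l$. Since $\Sigma_l$ is only irreducible, not aperiodic, the eigenvalues $e^{2\pi ip/N_l}\lambda_{\max}^{(l)}$ from Proposition \ref{Proposition: RPF} interact with the twist $e^{-itr}$. The approach is to pass to the aperiodic pieces given by the cyclic partition of $\Sigma_l$ and to $\sigma^{N_l}$. There the cohomological equation holds with an extra root-of-unity factor $e^{2\pi ip/N_l}$; minimality of $|t|$ fixes $p$, and summing over periods (multiples of $N_l$) produces the stated lattice $b\mathbb{Z}$.
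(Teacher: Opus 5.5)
Your argument is correct in substance, but it takes a different route from the paper. The paper's proof is a one-line reduction. It quotes the characterisation $e^{P_l(-ar)}\in\mathrm{spec}(\widehat{\mathcal{L}}_{a+it,l})$ iff $tr=u\circ\sigma-u+\frac{2\pi}{N_l}\Psi$ with $\Psi$ integer-valued, says this follows from the aperiodic theorem (Parry--Pollicott, Theorem 4.5) via the cyclic decomposition, and takes the spectral-radius domination as standard.

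You instead run the argument behind that theorem directly on the irreducible shift $\Sigma_l$:
\begin{itemize}
\item Lasota--Yorke for the first claim;
\item normalisation by $h_a$;
\item a maximum principle plus equality in the triangle inequality for the forward direction;
\item Liv\v{s}ic for the converse.
\end{itemize}
None of this needs aperiodicity. The maximum principle only uses that $\bigcup_n\sigma^{-n}x$ is dense in $\Sigma_l$, which irreducibility gives, and Liv\v{s}ic holds on transitive shifts. So your route is self-contained, avoids the cyclic decomposition, and gives the sharper relation $tr=u-u\circ\sigma+2\pi m$.

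\textbf{Your last paragraph is unnecessary, and its goal is mistaken.} Since $\frac{2\pi}{\abs{t}}\mathbb{Z}\subset\frac{2\pi}{\abs{t}N_l}\mathbb{Z}$, the inclusion stated in the lemma follows at once from your periodic-orbit computation. This is the only direction the paper later uses, to exclude poles on $\Re(s)=a$.

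There is no exact $N_l$-dependence to recover. If the periodic sums generate $b_{\max}\mathbb{Z}$, your argument shows the admissible $t$ are exactly $\frac{2\pi}{b_{\max}}\mathbb{Z}$, so the minimal $\abs{t}$ is $2\pi/b_{\max}$. Here is an example:
\begin{itemize}
\item Take vertices $A_1,A_2,B_1,B_2$ with all transitions $A_i\to B_j$ and $B_j\to A_i$, so $N_l=2$.
\item Let $r$ equal $\alpha$ on $[A_i]$ and $\beta$ on $[B_j]$.
\item At $t=\pi/(\alpha+\beta)$ one can solve $tr=u\circ\sigma-u+\pi\Psi$.
\item Yet the peripheral eigenvalues there are $\pm i\,e^{P_l(-ar)}$, not $e^{P_l(-ar)}$.
\end{itemize}
So the $\frac{2\pi}{N_l}$ version holds only as an implication. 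Root-of-unity bookkeeping on the cyclic pieces cannot yield more.

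\textbf{Three smaller repairs.}
\begin{enumerate}
\item The additive Liv\v{s}ic theorem does not apply, since the periodic sums of $r$ lie in $b\mathbb{Z}$ rather than vanishing. Apply the circle-valued version to $e^{2\pi i r/b}$, whose periodic data is trivial. The same dense-orbit and closing argument works on a transitive shift of finite type. This gives $e^{2\pi i r/b}=w\circ\sigma/w$ directly, with no constant $c$. A genuine constant could not be absorbed: it would rotate the eigenvalue to $e^{-2\pi i c/b}e^{P_l(-ar)}$. The function $w$ lifts to a Hölder $u$ because $\Sigma_l$ is totally disconnected.
\item With $r=u-u\circ\sigma+bm$ and $t=2\pi/b$, the relation $e^{-itr(y)}f(y)=f(\sigma y)$ is solved by $f=e^{+itu}$, not $e^{-itu}$. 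The eigenfunction of $\widehat{\mathcal{L}}_{a+it,l}$ is then $h_af$, not $f$.
\item Passing from $e^{P_l(-ar)}\in\mathrm{spec}(\widehat{\mathcal{L}}_{a+it,l})$ to an actual eigenfunction uses quasi-compactness. Lemma \ref{lemma: quasicompactness} gives $\rho_e\leq 2^{-\alpha}e^{P_l(-ar)}$, which is what you need.
\end{enumerate}
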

\begin{proof}
    This follows from the well-known fact that $e^{P_l(-ar)}\in \mathrm{spec}(\widehat{\mathcal{L}}_{a+it,l})$ if and only if there is some Hölder continuous function\footnote{Possibly with a different Hölder exponent} $u:\Sigma\to \mathbb{R}$, a continuous function $\Psi:\Sigma\to \mathbb{Z}$ such that $ tr= u\circ\sigma-u+\frac{2\pi}{N_l} \Psi$, which follows easily from the theorem in the aperiodic case (Theorem 4.5 in \cite{AST_1990__187-188__1_0}).
\end{proof}

\begin{definition}\label{definition: maximal component}
   Define $a:=\max_{l\in \{1,\ldots,k\}} a_l$ as in Lemma \ref{lemma: pressure is zero at some point}. We say $\Sigma_l$ is a \textbf{maximal component} if $P_l(-ar)=0$.
\end{definition}
\subsection{Poincaré series}
We now consider a counting problem for orbit points restricted to those generated by group elements corresponding to words in $\mathfrak{L}(\mathcal{G})$ under the condition that we fix the first $l$ entries of $x\in \mathfrak{L}(\mathcal{G})$. 
 For a path $x=(x_0,x_1,x_2,\ldots ,x_p)\in \mathcal{P}(\mathcal{G})$ and $l,m\in \{0,1,\ldots, k\}$ with $l\leq m$, we define $x_l^m:=(x_l,x_{l+1},\ldots, x_m)$. Furthermore, we denote the \textbf{length} of $x$ by $\abs{x}=p+1$ and the \textbf{displacement} by \begin{equation}\label{equation: definition: displacement}
    L_x:= d\left(\overline{\lambda(x)}\cdot x_0,  x_0\right).
\end{equation}
Consider for $u\in \mathfrak{L}(\mathcal{G})$ and $m\geq \abs{u}$ the set
\[J_m(u):=\left\{ x\in \mathfrak{L}(\mathcal{G}): \abs{x}=m, x_{0}^{\abs{u}-1} = u\right\}.\]
We have the following proposition for counting orbits restricted to the set $J_m(u)$.  
\begin{proposition}\label{proposition: counting representatives of cosets}
Assume the geodesic flow on $X/\Gamma$ is mixing. Assume we have a Redfern coding $(\mathcal{G},*,\lambda)$ associated to a quasiconvex subgroup $H$ such that $\partial H\neq \partial\Gamma$. Let $u\in \mathfrak{L}(\mathcal{G})$. There exists a constant $C_{u}\geq 0$ such that 
\[N^{u}(T) :=\#\left\{x\in \mathfrak{L}(\mathcal{G}): x_{0}^{{\abs{u}}-1} = u, L_x \leq T\right\}\]
satisfies
\[\lim_{T\to \infty} e^{-\delta T}N^{u}(T)=C_u\]
as $T\to \infty$, where $\delta$ is the exponential growth rate in Roblin's orbital counting theorem, i.e. Proposition \ref{theorem: Margulis} in the $\CAT$-setting. 
\end{proposition}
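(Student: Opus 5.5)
The plan is to express the generating function of $N^u$ through the transfer operators of Definition \ref{definition: transfer operator}, and then apply a Tauberian theorem (Ikehara--Wiener) once I know that the abscissa of convergence equals $\delta$, that there is a simple pole there, and that the function extends analytically to the rest of the line $\Re(s)=\delta$.

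\textbf{Step 1: the series.} Write $u=(*,u_1,\ldots,u_p)$ and set
\[
\eta_u(s):=\sum_{x\in\mathfrak L(\mathcal G),\,x_0^{p}=u} e^{-sL_x}=\int e^{-sL_x}\,dN^u .
\]
By Proposition \ref{proposition: existence of roof function}, $L_x=r^{|x|-1}(x0^\infty)$. Every extension of $u$ is $u$ followed by a path in $\mathcal P(\mathcal G)$ starting at $u_p$. Summing over lengths $n$ and going backwards through the shift gives
\[
\eta_u(s)=\sum_{n\ge0}\mathcal L_s^n(\chi_{[u]}e^{-sr^{p}})\big((0,0,\ldots)\big),
\]
where $\chi_{[u]}$ is the indicator of the cylinder $[u]$. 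The precise form of this identity does not matter much; what matters is that $\eta_u(s)=\langle (I-\mathcal L_s)^{-1}f_s,\delta_{0^\infty}\rangle$ with $f_s$ Hölder and analytic in $s$. Lemma \ref{lemma: quasicompactness} gives quasi-compactness, so $(I-\mathcal L_s)^{-1}$ is meromorphic on $\Re(s)>a'$, where $a'$ satisfies $2^{-\alpha}\rho(\mathcal L_{a'})<1$. By \eqref{equation: decomposing the spectrum}, its poles are the points $s$ at which $1$ lies in the spectrum of some $\widehat{\mathcal L}_{s,l}$.

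\textbf{Step 2: identify $a=\delta$, the pole, and the absence of other poles on the critical line.} The counting function $\#\{x\in\mathfrak L(\mathcal G):L_x\le T\}$ equals the number of cosets $Hg$ whose minimal representative $g$ satisfies $d(gx_0,x_0)\le T$. This is at most Roblin's count, which is $\sim Ce^{\delta T}$, so $a\le\delta$.

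For the reverse inequality I would use $\partial H\ne\partial\Gamma$. Since $H$ is quasiconvex of infinite index, its limit set $\Lambda_H$ is a proper closed subset, so there is an open set of $\partial\Gamma$ away from $\Lambda_H$. Geodesic words heading into a small neighbourhood of a point there are (up to bounded distance) minimal coset representatives. The map from a ball of $\Gamma$ onto a subset of these cosets is boundedly-many-to-one, and Patterson--Sullivan shadow estimates give that ball positive mass. Hence the growth rate is at least $\delta$, so $a=\delta$.

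On $\Re(s)=\delta$, Lemma \ref{lemma: lattice condition} says that a maximal component could only produce extra poles $\delta+it$ if its periodic $r$-sums lay in a lattice $b\mathbb Z$. Those periodic sums are translation lengths of elements of $\Gamma$ read off loops in the coding. I would combine mixing of the geodesic flow, which is non-arithmeticity of the length spectrum by Roblin, with the fact that loops in a maximal component realise a family of hyperbolic elements whose lengths generate a dense subgroup. Then $N_l$ contributes no extra poles, since the phases $e^{2\pi i p/N_l}$ are eigenvalues at real $s$ and not equal to $1$.

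At $s=\delta$, the pole of $(I-\mathcal L_s)^{-1}$ has order equal to the length of the longest chain of maximal components in the component graph (Definition \ref{definition: components graph}). The argument must show that this order is $1$ for the chains reachable from $u$. If it were $k\ge 2$, the count would grow like $T^{k-1}e^{\delta T}$, contradicting the upper bound by Roblin's asymptotic. So the pole is simple, with residue $C_u\delta\ge0$; $C_u$ may vanish when no maximal component is reachable from $u_p$.

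\textbf{Step 3: conclude.} Apply Ikehara--Wiener to $\eta_u$: it is analytic on $\Re(s)>\delta$, extends continuously to $\Re(s)=\delta$ except for a simple pole at $\delta$, and has nonnegative coefficients. This gives $N^u(T)\sim C_ue^{\delta T}$. When $C_u=0$, the claimed limit $\lim_{T\to\infty} e^{-\delta T}N^u(T)=0$ follows because $\eta_u$ is then analytic at $\delta$, and a standard Tauberian bound applies.

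\textbf{Main obstacle.} I expect the hardest part to be the aperiodicity on maximal components in Step 2: showing that the periodic orbits of a single maximal component already have non-lattice $r$-sums. The obstruction is that these loops correspond to group elements whose displacement only approximates the translation length. My first attempt would be to note that, as in Lemma \ref{lemma: all words are geodesic}, loops can be concatenated geodesically, so $r^n$ along a loop $w$ equals $\lim_k d(\bar w^k x_0,x_0)/k$, which is exactly the translation length of $\bar w$. I would then argue that the elements $\bar w$ coming from one maximal component have full-measure limit set in the Patterson--Sullivan sense, so that Roblin's non-arithmeticity transfers to them.
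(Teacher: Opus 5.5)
Most of your outline follows the paper. That includes the resolvent representation of $\eta_u$, the identification $a=\delta$ from Roblin's upper bound together with a lower bound using $\partial H\neq\partial\Gamma$, the exclusion of chains of maximal components (a pole of order $\geq 2$ would contradict $N_H(T)\le Ce^{\delta T}$), and the final Wiener--Ikehara step. One small slip: your series carries a spurious factor $e^{-sr^{p}}$, which double counts the first $p$ steps. The correct identity is $\eta_u(s)=\sum_m\mathcal L_s^m\chi_{[u]}(\mathbf 0)$, but this is cosmetic.

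The genuine gap is the step you flag yourself: excluding $1\in\mathrm{spec}(\mathcal L_{\delta+it})$ for $t\neq 0$. You want to deduce that the translation lengths read off loops in one maximal component generate a dense subgroup. Your reason is that these elements have a large (full Patterson--Sullivan measure) limit set, so Roblin's non-arithmeticity for $\Gamma$ should ``transfer'' to them. No such principle is available. In the generality of CAT($-1$) spaces, non-arithmeticity is not a consequence of the size of a limit set: for trees with integer edge lengths, $\Gamma$ itself has full limit set and lattice length spectrum. Showing that the particular sub-family coming from $\Sigma_j$ inherits non-arithmeticity from $\Gamma$ is exactly the content of this step, and your sketch gives no mechanism for it. As written, nothing rules out the $r$-sums on some maximal $\Sigma_j$ lying in $b\mathbb Z$.

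The paper's missing ingredient is quantitative: a comparison with the prime geodesic theorem. Suppose the periodic $r$-sums on a maximal $\Sigma_j$ lie in $b\mathbb Z$. Since $P_j(-\delta r)=0$, Lemma \ref{lemma: period growth is exponential} gives $\asymp e^{\delta T}/T$ periodic orbits in $\Sigma_j$. Because all their $r$-lengths lie in $b\mathbb Z$, this count is a step function with jumps $\gtrsim e^{\delta kb}/(kb)$ at infinitely many $T=kb$.

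Your correct observation that the $r$-sum of a loop $w$ equals the translation length of $\bar w$ turns these orbits into closed geodesics of length exactly $kb$. This works only if the map from periodic orbits to conjugacy classes is uniformly bounded-to-one, which is Lemma \ref{lemma: finite-to-one}. It is proved in two parts. First, the bi-infinite periodic word is a geodesic through $e$, so the axis of the coded element passes within bounded distance of $x_0$. Second, proper discontinuity bounds the number of conjugates whose axes meet a fixed ball by a constant times the length, and the length is comparable to the period because $r$ is bounded.

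One then gets infinitely many $k$ with $\gtrsim e^{\delta kb}/(kb)$ closed geodesics of length $kb$. This contradicts \eqref{equation: Roblins orbit counting}, because $\pi_{X/\Gamma}(t)\sim Ce^{\delta t}/t$ forces its jumps to be $o(e^{\delta t}/t)$; this is exactly where mixing enters. Your proposal has neither the bounded-to-one lemma nor this counting argument. An ergodic-theoretic repair of your idea would also need that bounded-to-one control: it would push the equilibrium state on the suspension over $\Sigma_j$ to a measure of maximal entropy and invoke mixing of the Bowen--Margulis measure.
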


The proof of this proposition requires several steps. We first show no two maximal components are connected in the component graph we defined in Definition  \ref{definition: components graph}, which will ensure semi-simplicity of the eigenvalue $1$ of the operator $\mathcal{L}_a$. This will follow from an initial coarse counting estimate provided in Lemma \ref{lemma: Growth rate of exponent of Transfer operator} below. Let us make explicit the connection between the transfer operator and the counting problem. 
Consider the Laplace transform of the distributional derivative of $N^{u}$, i.e.
\begin{equation}\label{equation: Laplace transform}
    \eta_{u}(s)  := \int_0^{+\infty} e^{-sT} dN^{u}(T) =\sum_{m=\abs{u}}^{+\infty} \sum_{b\in J_m(u)} e^{-sL_b}.
\end{equation}
We obtain by the definition of $r$ in Proposition \ref{proposition: existence of roof function} that 
\[\sum_{b\in J_m(u)} e^{-sL_b} = \sum_{b\in J_m(u)} e^{-sr^m(b\mathbf{0})},\]
where $b\mathbf{0} = (b_1,\ldots ,b_m,0,0,\ldots)$.
Define the function $\chi_{[u]} \in C^\alpha(\Sigma)$ to be the characteristic function on the cylinder set $[u]:= \{(x_0,x_1,\ldots)\in \Sigma: u=(x_0,x_1,\ldots, x_{\abs{u}-1})\}$. We use the definition of the transfer operator to obtain that when $\Re(s)$ is sufficiently large,
\begin{equation}\label{Equation: simplifying the Poincaré series}
    \eta_{u}(s) = \sum_{m=\abs{u}}^{+\infty} \mathcal{L}_s^{m}\chi_{[u]}(\mathbf{0})= \sum_{m=0}^{+\infty} \mathcal{L}_s^{m}\chi_{[u]}(\mathbf{0}),
\end{equation}
where we use the fact that $\chi_{[u]}(b\mathbf{0})=0$ when $\abs{b}< \abs{u}$. It is straightforward to see that the assignment $s\mapsto \mathcal{L}_s$ is holomorphic as an operator-valued function and that it follows from Lemma \ref{lemma: quasicompactness} and the classical analytical perturbation theory of operators that the resolvent
\[ s\mapsto (1-\mathcal{L}_s)^{-1}\]
is holomorphic as a $C^\alpha(\Sigma)$-function on the half plane $\Re(s)>a$ and that there is some $\varepsilon>0$ for which it is meromorphic on $\Re(s)>a-\epsilon$, see e.g. Theorem 1.9 in Chapter VII of \cite{Kato} along with the supplementary notes for the chapter, whence 
\begin{equation}\label{equation: eta_uv using resolvent}
    \eta_{u}= (1-\mathcal{L}_s)^{-1}\chi_{[u]}(\mathbf{0}) 
\end{equation}
also extends to a meromorphic function on $\Re{s}>a-\epsilon$. 

In order to obtain an asymptotic for the counting problem, we shall show that the only pole of $\eta_{u}$ on the line $s= a+it$ is a simple pole at $t=0$, which will follow from an initial rough estimate for the counting problem.
We shall show that if $\partial H\neq \partial\Gamma$, the representatives of the cosets have, up to a multiplicative constant, the same growth as in the unrestricted case.
\begin{lemma}\label{lemma: exponential growth rate}
    Assume that $\partial H\neq \partial\Gamma$ and let $\delta$ be the constant in Roblin's orbital counting theorem \cite{Roblin03}.  For $t>0$, let $N_H(t)$ be the number of paths $w\in \mathfrak{L}(\mathcal{G})$ with $L_w\leq t $. Then $ N_H(t) \geq  \varepsilon e^{t\delta}$ for some $\varepsilon>0$.  
\end{lemma}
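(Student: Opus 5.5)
The idea is to show that a positive proportion of the orbit of $x_0$ consists of coset representatives, up to a bounded change in displacement. Since $\partial H\neq\partial\Gamma$ and $\partial H$ is closed, I pick a point $\xi\in\partial\Gamma\setminus\partial H$ and a neighbourhood $U$ of $\xi$ in $\Gamma\cup\partial\Gamma$ whose closure is disjoint from $\partial H$. Quasiconvexity of $H$ then controls geodesics from $e$ to points of $U$. For $g\in\Gamma$ lying in the shadow cone $U$ (far from $e$), every $h\in H$ satisfies $(h,g)_e\le K$ for a constant $K$ depending only on $U$ and the quasiconvexity constant. Otherwise there would be a sequence $h_n\in H$, $g_n\in U$ with Gromov products tending to infinity, forcing $\xi$-type limit points into $\partial H\cap\overline U$, which is empty.

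The first key step uses this to show that such $g$ are almost minimal in their coset. For $h\in H$ we have
\[|hg|_{\Gamma_0}\ge |h|_{\Gamma_0}+|g|_{\Gamma_0}-2(h^{-1},g)_e\ge |g|_{\Gamma_0}-2K.\]
Applying the bound to $h^{-1}\in H$ gives $(h^{-1},g)_e\le K$, hence this inequality. Now let $w\in\mathfrak{L}(\mathcal G)$ be the Redfern representative of $Hg$, so $\overline{\lambda(w)}=h g$ for some $h\in H$ with $|hg|_{\Gamma_0}\le|g|_{\Gamma_0}$. The inequality above, run for $h^{-1}$ applied to $hg$ and combined with thinness of triangles, shows that $hg$ and $g$ are at bounded distance, with $d_{\Gamma_0}(hg,g)\le K'$. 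I expect this to be the main obstacle. The point is that the word length of $h$ itself must be bounded, not merely that of $hg$. I would obtain this from the triangle $e,h^{-1},g$: since $(h^{-1},g)_e\le K$, we get $|h|\le |hg|-|g|+2K\le 2K$. So $h$ ranges over a finite set $F=H\cap B(e,2K)$, and then $L_w=d(hg\cdot x_0,x_0)\le d(g\cdot x_0,x_0)+D$ with $D=\max_{h\in F}d(hx_0,x_0)$.

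It follows that the map $g\mapsto w(Hg)$, restricted to $g\in U$, is at most $|F|$-to-one, because each fixed coset representative corresponds to $g=h^{-1}\overline{\lambda(w)}$ with $h\in F$. Therefore
\[N_H(t)\ge |F|^{-1}\#\{g\in\Gamma\cap U: d(g\cdot x_0,x_0)\le t-D\}.\]

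The final step is a lower bound on the orbit count restricted to the cone $U$. I would take it from Roblin's equidistribution theorem (the refinement of Proposition~\ref{theorem: Margulis} in the $\CAT$ setting, valid under mixing): the orbit points $g\cdot x_0$ with $d\le T$ equidistribute to the Patterson–Sullivan measure, which has full support on the limit set $\partial\Gamma$. Hence the proportion of orbit points in the open cone $U$ tends to $\mu_{x_0}(U\cap\partial\Gamma)>0$. This gives $\#\{g\in U:d(gx_0,x_0)\le T\}\ge c\,e^{\delta T}$.

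If one prefers to avoid equidistribution, there is a cruder route. Choose finitely many elements $\gamma_1,\dots,\gamma_m$ such that for every $g$ some $\gamma_i g$ lies in $U$ (possible by a ping-pong and north–south argument in the nonelementary group). Then $\#\{g\in U:d\le T\}\ge m^{-1}\#\{g:d\le T-D'\}$, and Proposition~\ref{theorem: Margulis} finishes the argument with $\varepsilon=c\,|F|^{-1}e^{-\delta(D+D')}$.
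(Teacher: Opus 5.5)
Your argument is correct and is essentially the paper's proof: the same neighbourhood of a point of $\partial\Gamma\setminus\partial H$, the same compactness bound on $(h,g)_e$, and the same use of minimality of the coset representative to get $\abs{h}_{\Gamma_0}\le 2K$. Your \enquote{cruder route} is exactly the paper's covering of $\Gamma\cup\partial\Gamma$ by finitely many translates, which the paper justifies via minimality of the boundary action plus compactness, and your finite set $F$ gives the multiplicity bound a bit more directly than the paper's appeal to proper discontinuity of $H$ on $X\cup(\Lambda(\Gamma)\setminus\Lambda(H))$. One small slip, which you never use: the aside $d_{\Gamma_0}(hg,g)\le K'$ is false as stated in the left-invariant word metric, where it equals $\abs{g^{-1}h^{-1}g}_{\Gamma_0}$; what you actually use, namely $\abs{h}_{\Gamma_0}\le 2K$ and $d(hg\cdot x_0,x_0)\le d(g\cdot x_0,x_0)+d(h\cdot x_0,x_0)$, is correct.
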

\begin{proof}
        We adapt the argument of Proposition 5 in \cite{Parkkonen2015}. The orbit map $h\mapsto h\cdot x_0$ induces a bi-Hölder bijection from $\partial \Gamma$ to the limit set $ \Lambda(\Gamma)$ of $\Gamma$ on $X$, see \cite{Ghys90}. In particular, we have that $\Lambda(H)\neq \Lambda(\Gamma)$. Note that $H$ acts properly discontinuously \footnote{Here we're defining an action to be properly discontinuous if for any compact set $K$, the set $\{g\in H: g\cdot K \cap K \neq \emptyset\}$ is finite} on $X\cup (\Lambda(\Gamma)-\Lambda( H))$, otherwise $H$ would have an accumulation point in $X\cup \Lambda(\Gamma)$ not in $\Lambda( H)$, which is a contradiction. Let $p\in \Lambda(\Gamma)\backslash \Lambda(H)$. Since $\Lambda(H)$ is a compact set and the topology on $X\cup \Lambda( \Gamma)$ is metrizable, there exists an open neighbourhood $U$ of $p$ in $X \cup \Lambda(\Gamma)$ such that its closure $\overline{U}$ still satisfies $\overline{U}\cap \Lambda( H) = \emptyset$. Then $\overline{U}$, and hence $U$, intersects only finitely many of its $H$-translates. 
        
        If $V\subset \Gamma$ denotes the pre-image of $U$ under the orbit map $h\mapsto h\cdot x_0$ and $(\cdot,\cdot)_\cdot$ denotes the Gromov product with respect to the word metric, then there is some $M$ such that $(a,h)_e\leq M$ for all $a\in V $ and $h\in H$, otherwise a compactness argument shows that the closure $\overline{V}$ intersects $\partial H$. If $h\in H$ such that $h^{-1}a$ has minimal word length, we have by the relations
        \[(a,h)_e=\frac{1}{2}\left(\abs{a} +\abs{h}-\abs{h^{-1}a}\right) \ \text{ and } \abs{a} -\abs{h^{-1}a}\geq 0,\] the fact that $\abs{h}\leq 2M$
        and quasiconvexity that $\langle a\cdot x_0,h\cdot x_0\rangle_{x_0}\leq M'$ uniformly, where $M'>0$ is some constant and $\langle\cdot,\cdot\rangle_\cdot$ is the Gromov product on $X$. We thus have by definition of the Gromov product and finiteness of $\abs{h}$ that $\abs{d(a\cdot x_0, x_0) -d(a\cdot x_0,h\cdot x_0)}$ is uniformly bounded, i.e. for any orbit point $a\cdot x_0\in U$, we have that the difference between the displacement $d(a\cdot x_0,x_0)$ and $d(b\cdot x_0, x_0)$ is uniformly bounded for any $b$ with $Hb = Ha$ of minimal word length.         
        
        By this and the fact that $\#\{h\in H: h\cdot U\cap U \neq \emptyset\}$ is finite, it suffices to show that the cardinality of $\{g\in \Gamma: d(g\cdot x_0, x_0)\leq R\}\cap U$ can be bounded below by $C^{-1}\#\{g\in \Gamma: d(g\cdot x_0, x_0)\leq R\}$, for $C$ large enough, which is a fairly standard argument. For example, use minimality of the action on $\partial\Gamma$ and compactness of $\Gamma \cup \partial \Gamma$ to argue that $\Gamma \cup \partial \Gamma=g_1 V \cup \cdots\cup g_m V$ for some $g_1,\ldots,g_m\in \Gamma$, whence $g_1\cdot U \cup \cdots \cup g_m\cdot U$ contains all the orbit points. 
\end{proof}
We shall use this to argue that the resolvent $(1-\mathcal{L}_s)^{-1}$ has a simple pole at $s=\delta$. Let us first relate $\delta$ with the pressure functions associated to the maximal components.
\begin{lemma}
The quantity $a:= \sup_{l} a_l$, where $a_l$ is defined as in Lemma \ref{lemma: pressure is zero at some point}, is the \textbf{exponent of convergence} of the Poincaré series
\[\eta_\mathcal{G}(s)=\int_0^{+\infty} e^{-sT} dN_H(T) = \sum_{w\in \mathfrak{L}(\mathcal{G})}e^{-sL_w},\]
i.e. the infimum over all $s\in (0,\infty)$ for which the above converges.
\end{lemma}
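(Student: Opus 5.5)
We need to show that the exponent of convergence of $\eta_\mathcal{G}$ equals $a=\max_l a_l$. The plan is to express the Poincaré series through the transfer operator and then sandwich the relevant sums between the spectral data of the irreducible components.

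\textbf{Step 1: rewrite the series.} As in \eqref{Equation: simplifying the Poincaré series}, with $\chi_{[*]}$ in place of $\chi_{[u]}$, we have for real $s$
\[\eta_\mathcal{G}(s)=\sum_{m\geq 0}\mathcal{L}_s^m\chi_{[*]}(\mathbf{0}),\]
a series of nonnegative terms. So the exponent of convergence is the infimum of those $s$ for which $\sum_m \mathcal{L}^m_s\chi_{[*]}(\mathbf 0)<\infty$. Equivalently, it is determined by the exponential growth rate of $\mathcal{L}^m_s\chi_{[*]}(\mathbf 0)$ in $m$.

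\textbf{Step 2: upper bound (convergence for $s>a$).} Fix real $s>a$. By monotonicity in Lemma \ref{lemma: pressure is zero at some point}, $P_l(-sr)<0$ for every $l$. Lemma \ref{lemma: quasicompactness} and \eqref{equation: decomposing the spectrum} then give
\[\rho(\mathcal{L}_s)=\rho(\widehat{\mathcal{L}}_s)=\max_l e^{P_l(-sr)}<1.\]
The first equality needs care: the essential spectral radius is at most $2^{-\alpha}\rho(\mathcal{L}_s)$, so the spectral radius is attained by isolated eigenvalues, and these coincide with those of $\widehat{\mathcal{L}}_s$. The one exception is the eigenvalue $1$ at $\mathbf{0}$, which has been removed by the convention in Definition \ref{definition: transfer operator}. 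Hence $\|\mathcal{L}_s^m\|\le C\theta^m$ for some $\theta<1$, and the series converges. This shows the exponent is $\le a$.

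\textbf{Step 3: lower bound (divergence for $s<a$).} Let $l$ be an index with $a_l=a$, and fix $s<a$, so that $P_l(-sr)>0$. Since every vertex is accessible from $*$, choose a path $p$ from $*$ into a vertex $v\in V_l$. For each path $q$ in $\Sigma_l$ starting at $v$, the concatenation $pq$ lies in $\mathfrak{L}(\mathcal{G})$. Using Proposition \ref{proposition: existence of roof function} and the cocycle structure of $r$, I want
\[L_{pq}\le |p|\,\|r\|_\infty+r^{|q|}(\text{point of }\Sigma_l\text{ following }q)+C,\]
where Hölder continuity (bounded distortion) controls the discrepancy between evaluating on the terminating sequence $pq\mathbf 0$ and on an infinite sequence in $\Sigma_l$.

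Then
\[\eta_\mathcal{G}(s)\ge c\sum_m\sum_{|q|=m}e^{-sr^m}\asymp \sum_m \widehat{\mathcal L}^m_{s,l}\chi_{[v]},\]
whose growth is $e^{mP_l(-sr)}$ by Proposition \ref{Proposition: RPF}, since the eigenfunction is positive and there are finitely many cyclic classes. So the series diverges, and the exponent is $\ge a$.

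\textbf{Main obstacle.} I expect the difficulty to be the comparison in Step 3 between $r^m$ evaluated at terminating sequences (which pass through $0$) and Birkhoff sums inside $\Sigma_l$. The definition of $r$ on terminating sequences, namely differences of displacements, must be matched against its Hölder extension. I would handle this by noting that $r^m(pq\mathbf 0)$ telescopes exactly to $L_{pq}$. Then Hölder continuity gives $|r^m(y)-r^m(y')|\le C'$ whenever $y,y'$ agree on their first $m$ symbols, with $C'$ bounded by a geometric series. This yields the uniform bounded-distortion constant. Positivity of $r$ is not needed, only boundedness.
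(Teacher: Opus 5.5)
Your proof is correct. Step 2 is the same as the paper's upper bound: for real $s>a$ the spectral radius of $\mathcal{L}_s$ is below $1$, so the Neumann series converges.

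The two proofs differ in the lower bound. The paper never treats $s<a$ directly. It shows divergence at $s=a$ itself by citing Lemma \ref{lemma: Growth rate of exponent of Transfer operator}, adapted from Gou\"ezel, which gives $\mathcal{L}_a^p(\chi_{[*]})(\mathbf{0})\geq Cp^{m-1}\geq C$ because $s(a)=0$. Divergence for $s<a$ then follows because each $e^{-sL_w}$ is decreasing in $s$.

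You instead isolate one maximal component $\Sigma_l$ and inject its finite paths into $\mathfrak{L}(\mathcal{G})$ by prefixing a fixed path from $*$. The exact cocycle identity and bounded distortion of the H\"older roof function then compare $L_{pq}$ with Birkhoff sums in $\Sigma_l$. The Ruelle--Perron--Frobenius theorem for that single irreducible component yields growth like $e^{mP_l(-sr)}$ with $P_l(-sr)>0$.

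Your route is more elementary and self-contained. It does not need the multi-component estimate, whose proof the paper only sketches and which must hold uniformly in $t$ near $a$. Nor does it need any information about how the maximal components sit in the component graph. The paper's route buys the sharper factor $p^{m-1}$, which it reuses in Lemma \ref{Lemma: maximal components are not connected} to rule out chains of maximal components.

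Your method also gives divergence at $s=a$ with no extra work. There $P_l(-ar)=0$, and integrating $\widehat{\mathcal{L}}^m_{a,l}\chi_{[v]}$ against the eigenmeasure shows that the component's partition function is bounded below by a positive constant for every $m$.
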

\begin{proof}
    By a similar argument as the one we employed for obtaining a formula for $\eta_{u}$, we see that
\[\eta_\mathcal{G}(s)=\sum_{p=0}^{+\infty} \mathcal{L}_s^{p}(\chi_{[*]})(\mathbf{0})= (1-\mathcal{L}_s)^{-1}(\chi_{[*]})(0),\]
where $\chi_{[*]}$ is the indicator function over all sequences in $\Sigma$ which start at $*$. Since the spectral radius of $\mathcal{L}_{s}$ is less than unity when $\Re(s)>a$, it suffices to prove that $\lim_{M\to \infty} \sum_{p=1}^{M} \mathcal{L}_a^{p}(\chi_{[*]})(\mathbf{0})=\infty$. This follows straightforwardly from Lemma \ref{lemma: Growth rate of exponent of Transfer operator} below.
\end{proof}

We shall later show in Lemma \ref{Lemma: maximal components are not connected} that the above estimate forces $\eta_{\mathcal{G}}$ to have a simple pole at $s=\delta$. To also show that this implies no two maximal components in the component graph are connected by an edge, we need the following result which gives a lower bound for the growth of $ \mathcal{L}_s^{p}(\chi_{[*]})(\mathbf{0})$ for real $s$ as $p\to \infty$.
\begin{lemma}\label{lemma: Growth rate of exponent of Transfer operator}
    Let $m$ be the largest number such that there exist maximal components $V_{l_1},\ldots,V_{l_m}$ and a path $(v_0,v_1,v_2,\ldots)$ of connected vertices which enter into each of these components. Then there exist constants $C,\varepsilon >0$ such that for all $t\in [a-\epsilon,a+\epsilon]$ and $p\geq 1$,
            \[\mathcal{L}_t^p(\chi_{[*]})(\mathbf{0})\geq Cp^{m-1}e^{ps(t)},\] 
   where \[s(t)=\min\left\{{P_{j}(-tr)}: \Sigma_j \text{ is a maximal component}\right\} .\]
\end{lemma}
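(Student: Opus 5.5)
We first unfold $\mathcal{L}_t^p(\chi_{[*]})(\mathbf{0})$ as a sum over paths. Every preimage of $\mathbf{0}$ under $\sigma^p$ that is not identically zero and starts at $*$ has the form $x\mathbf{0}$ with $x=(*,x_1,\ldots,x_{q})\in\mathfrak{L}(\mathcal{G})$ followed by zeros. Hence
\[\mathcal{L}_t^p(\chi_{[*]})(\mathbf{0})=\sum_{x} e^{-t r^p(x\mathbf{0})},\]
where the sum runs over admissible strings of length $p$ that begin at $*$. All terms are positive for real $t$, so it suffices to bound from below the contribution of a well-chosen subfamily of paths. The subfamily we use consists of paths that follow a fixed chain of maximal components. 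By the definition of $m$ there are maximal components $V_{l_1}\prec\cdots\prec V_{l_m}$ and connecting paths. Fix once and for all a path $c_0$ from $*$ into $V_{l_1}$, and paths $c_j$ from $V_{l_j}$ to $V_{l_{j+1}}$, all of bounded length $K$. Because $\mathcal{G}$ and the choices are finite, the values of $r$ along these connectors are uniformly bounded, say by $D$. Then $\Sigma^{term}$ contains the sequences
\[c_0\,w_1\,c_1\,w_2\cdots c_{m-1}\,w_m\,\mathbf{0},\]
with $w_j$ an admissible word inside $V_{l_j}$ of length $n_j$, appropriate endpoints, and $\sum n_j + (\text{connector lengths}) = p$.

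Next we bound each block from below. Since $r$ is Hölder, it has bounded distortion, so the Birkhoff sum $r^p$ of the concatenation differs from $\sum_j r^{n_j}(w_j\cdots)$ by at most a constant $C_1$. This uses the summability of $d_\Sigma$-variations, the fact that $r$ on a cylinder of length $n$ varies by at most $|r|_\alpha 2^{-\alpha n}$, and the bounded contributions of the connectors. For each maximal component, the Ruelle--Perron--Frobenius theorem (Proposition \ref{Proposition: RPF}) applied to $\widehat{\mathcal{L}}_{t,l_j}$ gives the estimate
\[\sum_{w \in V_{l_j}^{n},\ w\text{ from } u \text{ to } v} e^{-t r^{n}(w\ldots)} \geq C_2\, e^{n P_{l_j}(-tr)}\]
for all $n$ in a suitable residue class mod $N_{l_j}$, with prescribed endpoints $u,v$. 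The constant $C_2$ is uniform in $t\in[a-\epsilon,a+\epsilon]$, because the eigenfunction, the eigenmeasure and the spectral gap depend continuously (indeed analytically) on $t$. Periodicity will be handled by restricting $n_j$ to the relevant residue classes and absorbing the finitely many shifts into the connector choices. This affects only the constants, since one can always pad with a bounded number of extra steps.

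It remains to sum over the decompositions $(n_1,\ldots,n_m)$. Each block contributes at least $C_2 e^{n_j P_{l_j}(-tr)}\geq C_2 e^{n_j s(t)}$, because each $V_{l_j}$ is maximal. The product is therefore at least $C' e^{(p-O(1))s(t)}$. The number of admissible compositions $n_1+\cdots+n_m=p-O(1)$ within the fixed residue classes is of order $p^{m-1}$, and this yields the bound $Cp^{m-1}e^{ps(t)}$. The factor $e^{-O(1)s(t)}$ is bounded because $t$ ranges over a compact interval.

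The main obstacle is the uniformity in $t$ of the RPF lower bound with fixed endpoints in a periodic irreducible component. We would handle it by passing to $\widehat{\mathcal{L}}_{t,l}^{N_l}$ on a cyclic class, where the operator is aperiodic. There we apply the uniform convergence $e^{-nP}\widehat{\mathcal{L}}^n_{t,l} f\to h_t\int f\,d\nu_t$, using analytic perturbation theory (Lemma \ref{lemma: pressure is zero at some point}) to get continuity of $h_t,\nu_t$ and a uniform gap near $t=a$.
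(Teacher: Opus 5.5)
Your proposal is correct and is essentially the argument the paper relies on: the paper's proof only cites Lemma 3.7 of Gou\"ezel, which is this same concatenation of blocks through a chain of maximal components combined with the $\asymp p^{m-1}$ count of compositions, and asserts that the constants stay uniform for $t$ near $a$. Your bounded-distortion estimate, the fixed-endpoint Ruelle--Perron--Frobenius lower bound on cyclic classes, and the padding used to absorb periodicity supply exactly the details the paper calls straightforward (the finitely many small $p$ follow from positivity of $\mathcal{L}_t^p(\chi_{[*]})(\mathbf{0})$ and compactness of the $t$-interval).
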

\begin{proof}
This is essentially contained in \cite{Gouezel14}, Lemma 3.7, where only one Hölder continuous potential is considered. But it is straightforward to generalise the proof to a sufficiently small real-valued perturbation of a real-valued roof function.     
\end{proof}

By Lemma \ref{lemma: exponential growth rate} we have that $N_H(t)\geq \varepsilon e^{\delta t}$ as $t\to \infty$. On the other hand, it is clear that 
\[N_H(T)\leq N_{\{e\}}(T):= \#\{h\in \Gamma: d(h\cdot x_0,x_0)\leq T\}\]
for all $T$, hence we have by Roblin's orbital counting result (cf. Chapitre 4 in \cite{Roblin03}) that for $C$ sufficiently large, there is some $t_0>0$ such that $C^{-1} e^{t\delta} \leq N_H(t) \leq Ce^{t\delta}$ for all $t\geq t_0$. We show that this implies the following.
\begin{lemma}\label{Lemma: maximal components are not connected}
 If $m$ is the number defined in Lemma \ref{lemma: Growth rate of exponent of Transfer operator}, then $m=1$.
\end{lemma}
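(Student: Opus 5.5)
We argue by contradiction: assume $m\geq 2$ and derive a growth rate for $N_H$ that is incompatible with the two-sided bound $C^{-1}e^{t\delta}\leq N_H(t)\leq Ce^{t\delta}$ for $t\geq t_0$ established just above.

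First we identify $a=\delta$. By the lemma identifying $a$ as the exponent of convergence of $\eta_{\mathcal{G}}$, and since $\eta_{\mathcal{G}}(s)=\int_0^\infty e^{-sT}\,dN_H(T)$, the two-sided bound shows that the abscissa of convergence is exactly $\delta$. Hence $a=\delta$, and $P_j(-\delta r)=0$ on every maximal component. In particular $s(\delta)=0$ in Lemma \ref{lemma: Growth rate of exponent of Transfer operator}.

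The key step is to evaluate the Poincaré series at the real point $t=\delta$ and compare two estimates.

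\emph{Upper bound.} Integrating by parts gives, for real $\sigma>\delta$,
\[\eta_{\mathcal{G}}(\sigma)=\sigma\int_0^\infty e^{-\sigma T}N_H(T)\,dT\leq C'+C\sigma\int_{t_0}^\infty e^{-(\sigma-\delta)T}\,dT\leq \frac{C''}{\sigma-\delta}.\]
So $\eta_{\mathcal{G}}$ has at most a simple-pole-type blow-up as $\sigma\downarrow\delta$.

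\emph{Lower bound.} Lemma \ref{lemma: Growth rate of exponent of Transfer operator} gives, for $\sigma\in(\delta,\delta+\epsilon]$,
\[\eta_{\mathcal{G}}(\sigma)=\sum_{p\geq 0}\mathcal{L}_\sigma^p(\chi_{[*]})(\mathbf{0})\geq C\sum_{p\geq1}p^{m-1}e^{p s(\sigma)}.\]
By Lemma \ref{lemma: pressure is zero at some point}, each $P_j(-\sigma r)$ is analytic with negative derivative at $\delta$, and there are finitely many maximal components. Hence $s(\sigma)\geq -K(\sigma-\delta)$ for some $K>0$ and $\sigma$ close to $\delta$. Therefore
\[\sum_{p\geq1}p^{m-1}e^{-pK(\sigma-\delta)}\asymp (\sigma-\delta)^{-m}.\]
If $m\geq2$, this contradicts the upper bound $C''(\sigma-\delta)^{-1}$ as $\sigma\downarrow\delta$. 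Hence $m\leq1$. Since $m\geq1$ holds trivially (some maximal component is reachable from $*$, as every vertex is accessible), we conclude $m=1$.

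The main obstacle is justifying the uniform lower bound in Lemma \ref{lemma: Growth rate of exponent of Transfer operator} for $\sigma$ slightly larger than $\delta$. This is where the polynomial factor $p^{m-1}$ must be kept with a constant $C$ independent of $\sigma$, and the lemma is stated precisely for $t$ in a neighbourhood of $a$, so we invoke it as given. A second point needing care is that $s(\sigma)$ is a minimum over several components. This is harmless: a minimum of finitely many functions, each with a bounded negative derivative, still satisfies the linear lower bound $s(\sigma)\geq -K(\sigma-\delta)$.
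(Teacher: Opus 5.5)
Your proof is correct and follows the paper's argument: the lower bound $\sum_{p}p^{m-1}e^{ps(\sigma)}\gtrsim(\sigma-\delta)^{-m}$ from the growth-rate lemma (using only $s(\sigma)\geq -K(\sigma-\delta)$) is played against the $O\bigl((\sigma-\delta)^{-1}\bigr)$ upper bound obtained by integrating $N_H(t)\leq Ce^{\delta t}$ by parts. The only cosmetic difference is that you compare the blow-up rates directly as $\sigma\downarrow\delta$, whereas the paper phrases the same comparison as pole orders via the meromorphy of $\eta_{\mathcal{G}}$ near $\delta$.
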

\begin{proof}
By Lemma \ref{lemma: Growth rate of exponent of Transfer operator}, we have for all $t\in (a,a+\varepsilon)$ that \[\eta_{\mathcal{G}}(t)=(1-\mathcal{L}_t)^{-1}(\chi_{[*]})(\mathbf{0})\geq C\mathrm{Li}_{1-m}(e^{s(t)}),\] where we define the \textbf{polylogarithm} for $s\in \mathbb{C}$ and $\abs{z}<1$ by the power series $\mathrm{Li}_{s}(z)=\sum_{p=1}^\infty z^p/p^s$. It follows from standard facts about the (meromorphic continuation of the) polylogarithm that $\mathrm{Li}_{1-m}(z)$ has a pole of order $m$ at $z=1$. By Lemma \ref{lemma: pressure is zero at some point}, the pressure functions $P_j(-tr)$ associated to the maximal components are analytic and have strictly negative derivative at $t=a$. Therefore, there exist constants $c_1,c_2>0$ such that $-c_2(t-a)\leq s(t)\leq -c_1(t-a)$ holds for $t$ sufficiently close to $a$, whence there exists some $\kappa>0$ such that \[\mathrm{Li}_{1-m}(e^{s(t)})\geq \frac{\kappa}{(t-a)^m}\] for $t$ sufficiently close to $a$. Using the fact that $\eta_{\mathcal{G}}(s)$ is meromorphic near $s=a$, we have that $\eta_\mathcal{G}(s)$ has a pole of order at least $m$ at $s=a$. By the discussion before the statement of this lemma, we have that $C^{-1} e^{t\delta} \leq N_H(t) \leq Ce^{t\delta}$ for $C$ large enough and $t\geq t_0>0$, which also implies $a=\delta$. We see by taking the Laplace transform that for $\epsilon>0$ we have that 
\begin{equation}\label{equation: inequality for Laplace transform}
\frac{C^{-1}}{\epsilon}-K\leq \int_0^\infty e^{-(\delta+\epsilon) t}N_H(t) dt\leq \frac{C}{\epsilon}+K,
\end{equation}
where $K$ is some additive constant related to the integrand in the interval $[0,t_0]$. By partial integration, we have that \[\eta_\mathcal{G}(\delta+\epsilon)=(\delta+\epsilon)\int_0^\infty e^{-(\delta+\epsilon) t}N_H(t)dt.\] 
By \eqref{equation: inequality for Laplace transform}, we obtain for $D>0$ sufficiently large and $\epsilon>0$ sufficiently small, that 
\[\frac{D^{-1}}{\epsilon} \leq \eta_\mathcal{G}(\delta+\epsilon)\leq  \frac{D}{\epsilon}.\]
Recalling that $s\mapsto \eta_\mathcal{G}(s)$ is meromorphic near $s=\delta$ we conclude that it has a simple pole at $s=\delta$, which implies that $m=1$ and hence that there exists no path from one maximal component to another. 
\end{proof}

Lemma \ref{Lemma: maximal components are not connected} implies (cf. the proof of Lemma 4.4 in \cite{CalegariFujiwara10}) that the algebraic multiplicity and the geometric multiplicity of the eigenvalue $1$ of $\mathcal{L}_a$ are the same. In fact the next result shows that this also holds in a neighbourhood of $s=a$. Consider the projection $R(a):C^{\alpha}(\Sigma)\to C^{\alpha}(\Sigma)$ onto the eigenspace of $\mathcal{L}_a$ of eigenvalue $1$. By perturbation theory (see \cite{Kato}) we have that $R(a)$ extends to an analytic function $s\mapsto R(s)$ of constant rank on some neighbourhood of $a$ such that the spectrum of $(1-R(s))\mathcal{L}_s$ remains bounded away from $1$. 
\begin{proposition}[Theorem 3.8 and Proposition 3.10 in \cite{Gouezel14}]\label{proposition: maximal eigenvalue splits}
Let $\mathcal{M}\subset\{1,\ldots,k\}$ such that if $j\in \mathcal{M}$, then $\Sigma_j$ is a maximal component. There exists a small complex neighbourhood $U$ of $a$ such that for all $s\in U$, there exists for each maximal component $V_{j}$ an eigenfunction $h^{(j)}_s\in C^\alpha(\Sigma)$ with eigenvalue $e^{P_{j}(-sr)}$ and a corresponding eigenmeasure  $\mu_s^{(j)}\in \left(C^\alpha(\Sigma)\right)^*$ varying holomorphically with $s$ such that
 \[R(s) = \sum_{j\in \mathcal{M}} R_j(s)\text{, where } R_j(s)f = h^{(j)}_s\int f\,d\mu_s^{(j)}.\]
 Furthermore, the eigenfunction $h^{(j)}_a$ is supported and strictly positive on the set of sequences in $\Sigma$ starting at a vertex $v_0$ with $v_0\in V_j$ or $v_0\in V_l \succ V_j$ for some $l$ (see Definition \ref{definition: components graph}). The eigenmeasure $\mu_a^{(j)}$ is supported on the set of all non-terminating sequences in $(v_p)_p\in\Sigma$ such that $v_p\in \Sigma_{j}$ for infinitely many $p$.
\end{proposition}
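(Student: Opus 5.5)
The plan is to build the spectral data of $\mathcal{L}_s$ near the eigenvalue $1$ by hand, one maximal component at a time, using the block lower-triangular form \eqref{equation: Bs} of $A$. I will then show that the resulting rank-one projections are mutually orthogonal and add up to $R(s)$. Throughout I take $\mathcal{M}$ to be the set of \emph{all} maximal components, which is what the decomposition of $R(s)$ requires. The key structural input is Lemma \ref{Lemma: maximal components are not connected}: since $m=1$, any two distinct maximal components are incomparable for $\prec$.

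\emph{Step 1: eigenvalues on each block.} Fix $j\in\mathcal{M}$. By Proposition \ref{Proposition: RPF}, $\widehat{\mathcal{L}}_{a,j}$ has the simple isolated eigenvalue $e^{P_j(-ar)}=1$, with a strictly positive eigenfunction $\hat h^{(j)}_a$ on $\Sigma_j$ and an eigenmeasure $\hat\mu^{(j)}_a$ on $\Sigma_j$. Analytic perturbation theory \cite{Kato}, together with Lemma \ref{lemma: pressure is zero at some point}, gives holomorphic continuations $\hat h^{(j)}_s$ and $\hat\mu^{(j)}_s$ with eigenvalue $e^{P_j(-sr)}$ for $s$ in a neighbourhood $U$ of $a$. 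For every non-maximal $l$ we have $P_l(-ar)<0$. Shrinking $U$, Lemma \ref{lemma: lattice condition} then gives $\rho(\widehat{\mathcal{L}}_{s,l})<\min_{j\in\mathcal{M}}|e^{P_j(-sr)}|$.

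\emph{Step 2: extend the eigenfunctions upstream.} The operator $\mathcal{L}_s$ only sees transitions $y_0\to x_0$, so it is block-triangular. Its diagonal blocks are the $\widehat{\mathcal{L}}_{s,l}$, and its off-diagonal terms couple a component only to components it has edges into. I define $h^{(j)}_s$ to be $\hat h^{(j)}_s$ on sequences starting in $V_j$, and zero on sequences starting in components $V_l$ that do not satisfy $V_l\succ V_j$. On each component $V_l\succ V_j$, taken in increasing order of distance from $V_j$, I solve
\[
\bigl(e^{P_j(-sr)}-\widehat{\mathcal{L}}_{s,l}\bigr)h_l=(\text{cross terms from already-defined components}).
\]
This is possible because such $l$ are non-maximal, and Step 1 makes the operator on the left invertible, holomorphically in $s$. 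At $s=a$ the inverse is the Neumann series $\sum_n e^{-nP_j(-ar)}\widehat{\mathcal{L}}_{a,l}^n$, which is a positive operator, and the cross terms are positive wherever an edge into the already-positive region exists. This gives strict positivity of $h^{(j)}_a$ on exactly the claimed support.

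\emph{Step 3: eigenmeasures.} For the eigenmeasures I dualise. Since $\mathcal{L}_s^*$ transports mass downstream, I define $\mu^{(j)}_s$ by integrating against $\hat\mu^{(j)}_s$ the "tail" of $f$ on sequences that eventually stay in $\Sigma_j$. Concretely, $\mu^{(j)}_s$ is the pullback via $\sigma^{-n}$ suitably renormalised by $e^{-nP_j(-sr)}$, or equivalently the functional $f\mapsto\lim_n e^{-nP_j(-sr)}\hat\mu^{(j)}_s(\mathcal{L}^n_s f|_{\ldots})$; its existence again uses the spectral gap of the non-maximal blocks. A sequence can visit $\Sigma_j$ infinitely often only by eventually remaining there, so $\mu^{(j)}_s$ is supported on such sequences.

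\emph{Step 4: orthogonality, normalisation and identification with $R(s)$.} By the support statements, $\mu^{(i)}_s(h^{(j)}_s)\neq0$ forces $V_i=V_j$ or $V_j\prec V_i$. Maximal components are incomparable, so the pairing is diagonal, and I normalise so that $\mu^{(j)}_s(h^{(j)}_s)=1$. Then the $R_j(s)$ are commuting idempotents with $R_iR_j=0$ for $i\neq j$, so $\sum_{j\in\mathcal{M}}R_j(s)$ is a projection of rank $|\mathcal{M}|$. By Lemma \ref{lemma: quasicompactness}, the total algebraic multiplicity of the spectrum of $\mathcal{L}_s$ near $1$ equals that of $\widehat{\mathcal{L}}_s$, which is $|\mathcal{M}|$ by \eqref{equation: decomposing the spectrum} and Step 1. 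So the span of the $h^{(j)}_s$ is the full spectral subspace, the eigenvalue cluster is semisimple, and the sum equals $R(s)$.

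The main obstacle is the case where several maximal pressures coincide at a given $s$, so that the Riesz projection cannot be split by separating eigenvalues. Remark \ref{remark: geometric multiplicities may not coincide} warns that this is exactly where a Jordan block could appear. I would rely entirely on $m=1$ and the disjointness of supports in Step 4 to exclude it, rather than on perturbation of isolated simple eigenvalues.
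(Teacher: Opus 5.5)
Your argument is correct in outline, but it takes a genuinely different route from the paper. The paper gives no proof of its own. It imports Theorem 3.8 and Proposition 3.10 of Gou\"{e}zel, which are stated for the cyclic decomposition of each maximal component: families $h^{(j)}_{s,p}$, $\mu^{(j)}_{s,p}$, $0\le p<N_j$, permuted by $\mathcal{L}_s$ and $\mathcal{L}_s^*$. It then sums the $h^{(j)}_{s,p}$ and averages the $\mu^{(j)}_{s,p}$ over $p$.

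You instead work with the simple Perron eigenvalue $e^{P_j(-sr)}$ of each irreducible block, so periodicity never enters. You then propagate the data along the component graph using resolvents of the non-maximal blocks. This shows exactly where $m=1$ (Lemma \ref{Lemma: maximal components are not connected}) is used:
\begin{itemize}
\item every component strictly upstream or downstream of a maximal one is non-maximal, so those resolvents exist; at $s=a$ they are positive Neumann series, which gives the support and positivity claims;
\item the cross pairings $\mu^{(i)}_s(h^{(j)}_s)$, $i\neq j$, vanish.
\end{itemize}
Together with the multiplicity count from Lemma \ref{lemma: quasicompactness}, this proves semisimplicity of the cluster at $1$ even though all maximal eigenvalues coincide at $s=a$. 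The paper only asserts this point via Calegari--Fujiwara. The citation buys brevity; your version is self-contained and explains why no Jordan block can occur.

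Two points need tightening. First, you conflate $\Sigma_l$ with the cylinder $[V_l]=\{x\in\Sigma: x_0\in V_l\}$. The splitting that makes $\mathcal{L}_s$ block-triangular is $C^\alpha(\Sigma)=\bigoplus_l C^\alpha([V_l])$. Its diagonal block $\mathcal{D}_{s,l}$ acts on $C^\alpha([V_l])$, which contains sequences that leave $V_l$, so it is not $\widehat{\mathcal{L}}_{s,l}$. Hence setting $h^{(j)}_s=\hat h^{(j)}_s$ on sequences starting in $V_j$ is undefined off $\Sigma_j$. Likewise, the invertibility in Step 2, and the growth bound behind the limit in Step 3, must be checked for $\mathcal{D}_{s,l}$.

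The fix is routine:
\begin{itemize}
\item Restriction to $\Sigma_l$ intertwines $\mathcal{D}_{s,l}$ with $\widehat{\mathcal{L}}_{s,l}$.
\item On the kernel of this restriction, $\mathcal{D}_{s,l}$ has spectral radius strictly below $e^{P_l(-\Re(s)r)}$, because length-$n$ inverse branches land within $2^{-n}$ of $\Sigma_l$.
\item Then define $h^{(j)}_s:=\lim_n e^{-nP_j(-sr)}\mathcal{D}_{s,j}^n g$ on $[V_j]$, for any H\"older extension $g$ of $\hat h^{(j)}_s$. The telescoping increments lie in that kernel, so the limit converges.
\end{itemize}

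Second, two projections with the same range need not coincide. To conclude $\sum_j R_j(s)=R(s)$, note that $P:=\sum_j R_j(s)$ commutes with $\mathcal{L}_s$, hence with the Riesz projection $R=R(s)$. Since their ranges agree, $R=PR=RP=P$.

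Finally, the recursion in Step 2 must follow a topological order of $\prec$, not shortest-path distance from $V_j$. Otherwise a cross term can refer to a component whose piece is not yet defined.
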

\begin{remark}
    In fact, Theorem 3.8 and Proposition 3.10 in \cite{Gouezel14} are formulated using functions and measures associated to the cyclic decomposition of the maximal components. In particular, these results show that near $s=a$, there exist functions $h^{(j)}_{s,p}$ and measures $\mu^{(j)}_{s,p}$ for $p\in\{0,\ldots, N_j-1\}$ such that \[\mathcal{L}_sh^{(j)}_{s,p} =e^{P_{j}(-sr)}h^{(j)}_{s,p+1}\text{ and }\mathcal{L}_s^*\mu^{(j)}_{s,p} =e^{P_{j}(-sr)}\mu^{(j)}_{s,p-1 },\]
    where $p+1$ and $p-1$ are taken modulo $N_j$. 
    The decomposition in the statement of Proposition \ref{proposition: maximal eigenvalue splits} therefore follows for $h^{(j)}_s=\sum_{p=0}^{N_j-1} h^{(j)}_{s,p}$ and $\mu_s^{(j)}=\left(\sum_{p=0}^{N_j-1}\mu^{(j)}_{s,p}\right)/N_j$.

\end{remark}
By applying the splitting $\mathcal{L}_s = \mathcal{L}_s R(s) + \mathcal{L}_s (1-R(s))$, we see that
 \[\eta_{u}(s)=  \sum_{j\in \mathcal{M}}\frac{h^{(j)}_s(\mathbf{0})\mu_s^{(j)}([u])}{1-e^{P_{j}(-sr)}}+Q(s),\]
where $Q(s)$ is some holomorphic function in $s$. Letting $s\to a$ and using the results on the support of $h^{(j)}_s,\mu_s^{(j)}$ of Proposition \ref{proposition: maximal eigenvalue splits} we obtain an explicit expression for the residue of the pole of $\eta_u$ near $s=a$. 
\begin{proposition}\label{proposition: form of Cuv}
There exists a holomorphic function $U\ni s\mapsto Q'(s)$ such that
 \[
\eta_u(s)=\frac{C'_u}{s-a}+Q'(s),
\]
where
\[
C'_u=
\sum_{j\in\mathcal M}
\frac{h_a^{(j)}(\mathbf 0)\mu_a^{(j)}([u])}
{-\left.\frac{d}{dt}P_j(-tr)\right|_{t=a}},
\]
and the contribution of each $j\in \mathcal{M}$ to the above sum is nonzero if and only if there exists a path $w\in \mathfrak{L}(\mathcal{G})$ with $w_{0}^{{\abs{u}-1}}=u$, such that $w$ contains a vertex belonging to $\Sigma_j$. 
 \end{proposition}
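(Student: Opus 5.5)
The plan is to start from the splitting displayed just before the statement,
\[
\eta_{u}(s)=\sum_{j\in \mathcal{M}}\frac{h^{(j)}_s(\mathbf{0})\mu_s^{(j)}([u])}{1-e^{P_{j}(-sr)}}+Q(s),
\]
valid on the neighbourhood $U$ of $a$ from Proposition \ref{proposition: maximal eigenvalue splits}, with $Q$ holomorphic there. We treat each $j\in\mathcal{M}$ separately.

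\textbf{Pole of each summand.} Since $P_j(-ar)=0$ and $s\mapsto P_j(-sr)$ is holomorphic near $a$ (Lemma \ref{lemma: pressure is zero at some point}), we expand
\[
1-e^{P_j(-sr)}=-\left.\tfrac{d}{dt}P_j(-tr)\right|_{t=a}(s-a)+O((s-a)^2).
\]
The derivative is strictly negative, so $1-e^{P_j(-sr)}$ has a simple zero at $s=a$. Shrinking $U$ if necessary, it has no other zeros in $U$. The numerator $h^{(j)}_s(\mathbf{0})\mu_s^{(j)}([u])$ is holomorphic in $s$, because $h_s^{(j)}$ and $\mu_s^{(j)}$ vary holomorphically. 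Therefore each summand equals $c_j/(s-a)$ plus a holomorphic function, where
\[
c_j=\frac{h_a^{(j)}(\mathbf{0})\mu_a^{(j)}([u])}{-\left.\frac{d}{dt}P_j(-tr)\right|_{t=a}}.
\]
Collecting the holomorphic remainders together with $Q$ into $Q'$ gives the stated formula for $C'_u$.

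\textbf{When is $c_j\neq 0$.} The denominator of $c_j$ is positive, so $c_j\neq0$ exactly when $h_a^{(j)}(\mathbf{0})\neq0$ and $\mu_a^{(j)}([u])\neq0$. We check each factor using the support statements in Proposition \ref{proposition: maximal eigenvalue splits}.

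\emph{The factor $h_a^{(j)}(\mathbf{0})$.} The vertex $0$ has an edge from every vertex into it, and it lies in its own component $\{0\}$. Every component $V_j$ therefore reaches $\{0\}$, so $V_{\{0\}}\succ V_j$. By the proposition, $h_a^{(j)}$ is strictly positive on sequences starting in $V_j$ or in a component $\succ V_j$. Hence $h_a^{(j)}(\mathbf{0})>0$. This needs a careful check of the direction of the order $\prec$, with $\mathbf{0}$ sitting at the "end" of all paths; I would verify it against the convention in \cite{Gouezel14}. If the convention runs the other way, I would instead use the fact that $\mathcal{L}_a^n\chi_{[u]}(\mathbf{0})$ counts paths forward from $*$.

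\emph{The factor $\mu_a^{(j)}([u])$.} The measure $\mu_a^{(j)}$ is supported on non-terminating sequences that visit $V_j$ infinitely often. Suppose no path $w\in\mathfrak{L}(\mathcal{G})$ extending $u$ meets $V_j$. Then every sequence in $[u]$ either terminates or never enters $V_j$, so $\mu_a^{(j)}([u])=0$.

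Conversely, suppose such a $w$ exists, passing through some $v\in V_j$. Since $V_j$ is irreducible, we can extend $w$ to a path looping in $V_j$, giving a cylinder $[w']\subset[u]$ ending at a vertex of $V_j$. I would then show $\mu_a^{(j)}([w'])>0$. The intended route is to use the eigen-equation $\mathcal{L}_a^{*}\mu_a^{(j)}=\mu_a^{(j)}$, together with the positivity of the Gibbs/eigenmeasure of the irreducible transfer operator $\widehat{\mathcal{L}}_{a,j}$ on every cylinder of $\Sigma_j$. Iterating the eigen-equation pulls the mass of a cylinder of $\Sigma_j$ back to $[w']$ with positive weight $e^{-ar^{|w'|}}$.

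\textbf{Main obstacle.} The hardest step is this last positivity claim, because $\mu_a^{(j)}$ lives on the full reducible shift. I would first try to identify its restriction to sequences eventually in $\Sigma_j$ with the pulled-back Ruelle eigenmeasure from \cite{Gouezel14}. That identification gives full support on cylinders that can reach $\Sigma_j$, and positivity of $\mu_a^{(j)}([w'])$ would follow.
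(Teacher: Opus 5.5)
Your proposal follows the paper's route. You start from the spectral splitting, use $P_j(-ar)=0$ and $\left.\frac{d}{dt}P_j(-tr)\right|_{t=a}<0$ to get a simple zero of $1-e^{P_j(-sr)}$ and read off the residue. You then obtain $h_a^{(j)}(\mathbf 0)>0$ and $\mu_a^{(j)}([u])>0$ from the support statements of Proposition~\ref{proposition: maximal eigenvalue splits} and Gou\"ezel's construction. Your version is more explicit than the paper's two-line proof. Your check that $\{0\}\succ V_j$ is correct under the convention stated in the paper, so the hedge about the direction of $\prec$ is unnecessary.

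One step needs a correction; it reflects an imprecision in the statement itself, which the paper's proof shares. To produce $[w']\subset[u]$ ending in $V_j$, the $V_j$-vertex of $w$ must occur at or after position $|u|-1$, and you should truncate $w$ there rather than extend it. Suppose instead that $u$ itself passes through $V_j$ and then exits to a downstream component. Since the component graph is acyclic, no sequence in $[u]$ can return to $V_j$, so $\mu_a^{(j)}([u])=0$ by the very support statement you invoke. Yet $w=u$ satisfies the hypothesis as literally written. The correct criterion is that the last vertex of $u$ lies in $V_j$ or in a component from which $V_j$ is reachable. This is also the form used later (``a path from $u$ passing through a maximal component'').

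With that reading your argument goes through. The positivity you flag as the main obstacle can be closed without the explicit construction:
\begin{itemize}
\item The eigen-equation shows that the restriction of $\mu_a^{(j)}$ to $\Sigma_j$ is an eigenmeasure of $\widehat{\mathcal{L}}_{a,j}^{*}$ with eigenvalue $1$.
\item This restriction is nonzero. Otherwise every cylinder ending in $V_j$ would be $\mu_a^{(j)}$-null, and such cylinders cover the support.
\item Hence, by irreducibility, it charges every cylinder of $\Sigma_j$.
\end{itemize}
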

\begin{proof}
    It is clear from the properties of $h_a^{(j)}$ that the inequality \[\frac{h_a^{(j)}(\mathbf{0})}{-\rstr{\frac{d}{dt}P_j(-tr)}{t=a}}>0\]
    holds. The fact that $\mu_a^{(j)}([u])>0$ when there exists a path $w$ with $w_{0}^{{\abs{u}-1}}=u$ follows from the explicit construction of the eigenmeasure in \cite{Gouezel14}.
\end{proof}

Let us recall that $\eta_u$ is the Laplace transform of the distributional derivative of the counting function $N^u$ defined in the statement of Proposition \ref{proposition: counting representatives of cosets} and that $\eta_u$ is holomorphic on $\Re(s)>a$. Recalling also that we showed $a=\delta$ in the proof of Lemma \ref{Lemma: maximal components are not connected}, we shall prove Proposition \ref{proposition: counting representatives of cosets} by using the Wiener-Ikehara Tauberian theorem. In order to apply this, we need to show that the map $s\mapsto \eta_u(s)-\frac{C_{u}'}{s-a}$ has a continuous extension to $\Re(s)\geq a$. Since $\eta_u$ is meromorphic in $\Re(s)> a-\epsilon$ for some $\epsilon>0$, it suffices to prove that $\eta_{u}$ has no other poles on the line $\Re(s)=a$ except at $s=a$. To show this, we use Lemma \ref{lemma: lattice condition} to argue that the roof function restricted to a maximal component being cohomologous to a lattice-valued function implies that \enquote{too many} closed geodesics have length in $b\mathbb{Z}$ for some $b$. This in turn relies on Lemma \ref{lemma: period growth is exponential}, which is a standard counting result for orbits on shift spaces, along with the more technical Lemma \ref{lemma: finite-to-one}, which shows these closed orbits indeed correspond to closed geodesics. 
\begin{lemma}\label{lemma: period growth is exponential}
    Let $(\Sigma_D,\sigma)$ be a shift space associated to an irreducible matrix. Let $f\in C^\alpha(\Sigma_D)$ be real-valued and consider the family of transfer operators defined by the expression $\mathcal{I}_s(w)(x):=\sum_{\sigma(y)=x} e^{-sf(y)}w(y)$. Let $\delta_D\in\mathbb{R}$ such that $\rho(\mathcal{I}_{\delta_D})=1$. Let $\pi_D(T)$ be the number of distinct periodic orbits $\{\sigma(y),\sigma^2(y),\ldots, \sigma^k(y)=y\}$ satisfying $f^k(y)\leq T$. Then 
    \begin{equation}\label{equation: exponentially many periodic orbits}
        \pi_D(T)\sim C(T)e^{\delta_D T}/{T},  
    \end{equation}
    as $T\to\infty$, where $C(T)$ is $b$-periodic for some $b>0$. Furthermore $C(T)$ is constant if and only if $1\notin \mathrm{spec}(\mathcal{I}_{\delta_D+it})$ for all $t\neq 0$.
\end{lemma}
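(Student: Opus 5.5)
The plan is the standard route to the prime orbit theorem for suspension flows: pass to a zeta function, locate its poles via the transfer operators $\mathcal{I}_s$, and then apply a Tauberian theorem. Let
\[\zeta_D(s):=\exp\sum_{k\ge1}\frac1k\sum_{\sigma^k y=y}e^{-sf^k(y)} .\]
Since $D$ is irreducible, Proposition \ref{Proposition: RPF} applies to $\mathcal{I}_s$. Let $N$ denote the gcd of the loop lengths, i.e.\ the cyclic period. We use the standard estimate
\[\sum_{\sigma^k y=y}e^{-sf^k(y)}=\sum_{\lambda}\lambda(s)^k+O(\theta^k\rho(\mathcal{I}_{\Re s})^k),\]
where the sum runs over the eigenvalues of $\mathcal{I}_s$ of modulus close to $\rho(\mathcal{I}_{\Re s})$ and $\theta<1$. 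It follows that $\zeta_D$ is nonvanishing and analytic on $\Re s>\delta_D$. It also extends meromorphically to a neighbourhood of $\Re s\ge\delta_D$, with poles exactly at those $s$ for which $1\in\mathrm{spec}(\mathcal{I}_s)$ (as in Chapters 5--6 of \cite{AST_1990__187-188__1_0}). By Lemma \ref{lemma: lattice condition} together with monotonicity of the pressure, $\rho(\mathcal{I}_s)\le\rho(\mathcal{I}_{\Re s})<1$ for $\Re s>\delta_D$. On the line $\Re s=\delta_D$, the point $1$ lies in the spectrum of $\mathcal{I}_{\delta_D+it}$ only for $t$ in a discrete subgroup $t_0\mathbb{Z}$ (or only for $t=0$). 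Simplicity of the eigenvalues on the maximal circle and $\frac{d}{dt}P(-tf)<0$ (Lemma \ref{lemma: pressure is zero at some point}) make each of these poles simple.

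There are two cases.

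\textbf{Non-lattice case.} Suppose $1\notin\mathrm{spec}(\mathcal{I}_{\delta_D+it})$ for all $t\ne0$. Then $\zeta_D'/\zeta_D$ has a single simple pole on $\Re s=\delta_D$, at $s=\delta_D$, with residue $1$. The Wiener--Ikehara theorem then gives $\psi(T):=\sum_{f^{k}(\tau)\le T}\abs{\tau}\sim e^{\delta_DT}/\delta_D$. Here the sum is over primitive orbits $\tau$ and their multiples, and $\abs{\tau}$ denotes the $f$-length of the primitive orbit. Partial summation converts this into $\pi_D(T)\sim e^{\delta_DT}/(\delta_DT)$, so $C(T)$ is constant.

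\textbf{Lattice case.} Suppose instead that the poles occur at $\delta_D+it_0\mathbb{Z}$. By Lemma \ref{lemma: lattice condition}, $f$ is cohomologous to $u\circ\sigma-u+b\Psi$ with $b=2\pi/(t_0N)$, so all periods lie in $b\mathbb{Z}$. In this case we argue directly, treating $\zeta_D$ as a power series in $z=e^{-bs}$. It is then meromorphic in $z$, with a single simple pole at $z=e^{-b\delta_D}$ on its circle of convergence. Standard generating-function asymptotics give
\[\#\{\tau: f(\tau)=nb\}\sim\frac{e^{\delta_D nb}}{n},\]
and summing over $nb\le T$ yields $\pi_D(T)\sim C(T)e^{\delta_DT}/T$, where $C(T)$ is $b$-periodic and depends on $T$ through $\lfloor T/b\rfloor$, so it is non-constant. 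Combining the two cases gives the ``if and only if'' statement: if $C(T)$ is constant then the second case cannot occur, and conversely.

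The main obstacle is the meromorphic extension of $\zeta_D$, with control of the poles, past the line $\Re s=\delta_D$ for a merely irreducible (not aperiodic) shift. I would pass to $\sigma^N$ restricted to one cyclic class, where the shift is aperiodic. Its zeta function has the eigenvalue structure $e^{NP(-sf)}$, and the $N$-th roots of unity in Proposition \ref{Proposition: RPF} combine so that $\zeta_D$ again has only a simple pole at $\delta_D$ from the maximal eigenvalue. The Dolgopyat-free Tauberian argument only needs continuity up to the line, which quasi-compactness (Lemma \ref{lemma: quasicompactness}) supplies.
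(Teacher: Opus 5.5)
Your route (Euler product for $\zeta_D$, poles located via $1\in\mathrm{spec}(\mathcal{I}_s)$, Wiener--Ikehara in the non-lattice case, a generating-function argument in the lattice case) is the Parry--Pollicott argument that the paper simply cites. The non-lattice half is fine. The lattice half fails as written, because of the normalisation $b=2\pi/(t_0N)$ you import from Lemma \ref{lemma: lattice condition}. That lemma only gives a containment of the periods in $b\mathbb{Z}$, and for $N\ge 2$ it is not sharp. In fact $1\in\mathrm{spec}(\mathcal{I}_{\delta_D+it})$ holds exactly when $tf^k(y)\in 2\pi\mathbb{Z}$ for every $\sigma^k y=y$, so the periods generate $\frac{2\pi}{t_0}\mathbb{Z}=Nb\,\mathbb{Z}$. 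With your $b$, $\zeta_D$ is therefore a power series in $z^N$. It has $N$ simple poles $e^{-b\delta_D}e^{-2\pi i k/N}$ on its circle of convergence (the images of $\delta_D+it_0\mathbb{Z}$ under $s\mapsto e^{-bs}$), not a single one. Hence the claimed asymptotic $\#\{\tau: f(\tau)=nb\}\sim e^{\delta_D nb}/n$ is false: the left side vanishes whenever $n$ is not divisible by $N$. A concrete example is the shift on $\{1,2,3\}$ with transitions $1\to 2$, $1\to 3$, $2\to 1$, $3\to 1$ and $f\equiv 1$. There $N=2$, $t_0=\pi$ and your $b=1$, but every orbit has even length, and $\zeta_D=(1-2z^2)^{-1}$ with $z=e^{-s}$ has poles at $z=\pm 1/\sqrt{2}$.

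The repair is easy. Take $b:=2\pi/t_0$, the generator of the period group. Then every pole $\delta_D+it_0k$ maps to the single point $z=e^{-b\delta_D}$, and your computation goes through, giving $C(T)=\frac{b}{1-e^{-\delta_D b}}e^{-\delta_D b\{T/b\}}$ (where $\{\cdot\}$ is the fractional part). This is $b$-periodic and non-constant.

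Cleaner still, do the reduction from your last paragraph at the very start. Intersecting with one cyclic class $\Sigma^{(0)}$ gives a bijection, preserving length and primitivity, between the periodic orbits of $(\Sigma_D,\sigma,f)$ and those of the aperiodic system $(\Sigma^{(0)},\sigma^N,f^N)$. The two zeta functions coincide, and by the block-cyclic structure $1\in\mathrm{spec}(\mathcal{I}_s)$ if and only if $1\in\mathrm{spec}(\mathcal{I}_s^N|_{C^\alpha(\Sigma^{(0)})})$. So $N$ disappears and the aperiodic Parry--Pollicott theorem applies verbatim.

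You should also state the tacit hypotheses that $f$ is cohomologous to a positive function and that $\delta_D>0$. The first is needed for $\pi_D(T)$ to be finite and for $\zeta_D$ to be a power series in $z$. The second is needed for $C(T)$ to be non-constant in the lattice case, which your ``if and only if'' relies on.
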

\begin{proof}
This is a well-known result, see e.g. the proof of Theorem 2 in \cite{ParryPollicott83}.
\end{proof}

Recall that an element $g\in \Gamma$ is hyperbolic if it fixes two distinct points $\xi,\eta$ in $\partial X$. We define the \textbf{axis} $\mathrm{ax}(g)$ of $g$ to be the geodesic with endpoints $\xi$ and $\eta$. On this axis, $g$ acts by translation, and we endow $\mathrm{ax}(g)$ with an orientation towards the attracting endpoint with respect to this translation. We define as in \cite{Roblin03} a \textbf{closed geodesic} to be the projection of an axis of a hyperbolic element to $X/\Gamma$. It follows from this definition that there is a one-to-one correspondence between oriented closed geodesics\footnote{we do not require these geodesics to be primitive} and conjugacy classes of hyperbolic elements. Furthermore, mixing of the geodesic flow implies the following counting result for periodic orbits. Let $\pi_{X/\Gamma}(t)$ be the number of closed geodesics of lengths less than or equal to $t$. Then there exists some constant $C>0$ for which 
 \begin{equation}\label{equation: Roblins orbit counting}
     \pi_{X/\Gamma}(t) \sim C\frac{e^{\delta t}}{t},
 \end{equation}
see Chapitre 5 in \cite{Roblin03}. 
\begin{lemma}\label{lemma: finite-to-one}
    There exists an $N\in\mathbb{N}$ such that for any conjugacy class $\mathfrak{C}$ of a hyperbolic element in $\Gamma$ and any $n\in \mathbb{N}$, there are at most $N$ distinct closed orbits 
    \begin{equation}\label{equation:periodic orbit}
        \{\sigma(v),\sigma^2(v),\ldots, \sigma^n(v)=v\}
    \end{equation}
    such that if we write $v=(v_1,\ldots,v_n,v_1,\ldots)$, then \[\overline{\lambda((v_1,v_2))\lambda((v_2,v_3))\cdots \lambda((v_n,v_1))}\in \mathfrak{C}.\]
\end{lemma}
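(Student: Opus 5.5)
The plan is to bound the number of closed orbits by counting the pairs (starting vertex, group element), after normalising each orbit by a cyclic shift so that its element is conjugate to a fixed reference element by a conjugator of uniformly bounded word length. Fix $\mathfrak{C}$ and $n$. If there is no such orbit we are done; otherwise fix one, given by a loop $v=(v_1,\ldots,v_n,v_1)$ in $\mathcal{G}$ with label word $w=\lambda(v)$, and set $g_w:=\overline{w}\in\mathfrak{C}$.

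\emph{Step 1: the powers are geodesic.} For every $k$, the path that goes $k$ times around the loop lies in $\mathcal{P}(\mathcal{G})$. By Lemma \ref{lemma: all words are geodesic}, $w^k$ is a geodesic word, and so is every subword of the bi-infinite periodic word $\cdots www\cdots$. Hence this word labels a bi-infinite geodesic $L_w$ in the Cayley graph $(\Gamma,\Gamma_0)$. The line $L_w$ passes through $e$, is invariant under $g_w$, and $g_w$ translates it by exactly $n$. The same holds for any other orbit $v'$ of length $n$ with element $g_{w'}\in\mathfrak{C}$, giving a line $L_{w'}$ through $e$.

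\emph{Step 2: bounded conjugator after a cyclic shift.} Write $g_{w'}=h^{-1}g_wh$. Then $hL_{w'}$ is a $g_w$-invariant geodesic with the same endpoints on $\partial\Gamma$ as $L_w$. In a $\delta$-hyperbolic Cayley graph, two geodesic lines with the same endpoints are at Hausdorff distance at most $2\delta$, so $h$ (the image of $e\in L_{w'}$) lies within $2\delta$ of some vertex $p$ of $L_w$. We can translate by a power of $g_w$, which does not change the conjugate since $g_w$ commutes with itself, and then cyclically shift the base point of the loop $v$. After this we may assume that $p$ is a prefix $\overline{w_1\cdots w_j}$ of $w$, with $0\le j<n$. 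The cyclically shifted loop $v^{(j)}$ lies in the same closed orbit as $v$ and has element $p^{-1}g_wp$. Setting $c:=p^{-1}h$, we get $\abs{c}_{\Gamma_0}\le R:=2\delta+1$ and $g_{w'}=c^{-1}\,g_{w^{(j)}}\,c$. Arguing symmetrically, by cyclically shifting $v'$ instead, we can choose a representative of the orbit of $v'$ whose element is $c^{-1}g_wc$ with $\abs{c}\le R$. Consequently each closed orbit has a representative loop whose element lies in the finite set $\{c^{-1}g_wc:\abs{c}\le R\}$. The cardinality of this set is bounded by the size of the ball of radius $R$, independently of $\mathfrak{C}$ and $n$.

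\emph{Step 3: a loop is determined by its start vertex and its element.} This is the step I expect to be the main obstacle, since in a general hyperbolic group there can be many geodesic words for the same element. I would use injectivity of $\pi_H$ rather than geometry. Suppose two distinct loops $\ell,\ell'$ of length $n$ both start at a vertex $v_1$ and have the same element $g''$. Choose a path $u\in\mathfrak{L}(\mathcal{G})$ from $*$ to $v_1$ (every vertex is accessible). Then $u\ell$ and $u\ell'$ are distinct elements of $\mathfrak{L}(\mathcal{G})$. They have the same image $H\overline{\lambda(u)}g''$ under $\pi_H$, which contradicts injectivity in Definition \ref{definition: strongly Markov}.

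Combining Steps 2 and 3, each closed orbit is determined by the pair (start vertex of its chosen representative, element $c^{-1}g_wc$). Hence there are at most $N:=\abs{V}\cdot\#\{c\in\Gamma:\abs{c}_{\Gamma_0}\le 2\delta+1\}$ closed orbits, and this bound is uniform in $\mathfrak{C}$ and $n$. The remaining care points are two. First, the symmetric normalisation in Step 2 must produce a representative of the orbit of $v'$, not of $v$. Second, the case $g_w=e$ is excluded because $\mathfrak{C}$ consists of hyperbolic elements, which matches Step 1, since an element with geodesic powers of length $kn$ has infinite order.
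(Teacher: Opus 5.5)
Your proof is correct, and it takes a genuinely different route from the paper's.

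The paper works in $X$. It uses the orbit map and stability of quasi-geodesics to show that the axis $\mathrm{ax}(g)$ of an element coded by a periodic orbit passes within a uniform distance $L$ of $x_0$. It then bounds the number of conjugates $hgh^{-1}$ whose axis meets $\overline{B}_L(x_0)$ by $W\,l(\overline{\gamma})$, using proper discontinuity along a fundamental segment of the axis. Finally it divides by the period, using boundedness of the roof function $r$ to get $n>\epsilon\, l(\overline{\gamma})$.

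You instead stay in the Cayley graph and quotient out the cyclic-shift symmetry \emph{before} counting. Choosing the base point of each orbit so that its element is $c^{-1}g_wc$ with $\abs{c}_{\Gamma_0}\le R$ (only your symmetric normalisation is actually needed) gives a set of uniformly bounded size directly. The paper's argument produces a set of size linear in $l(\overline{\gamma})$ and must then compare it with $n$. Your version avoids the roof function and the geometry of $X$ entirely, and it yields the explicit constant $N=\abs{V}\cdot\#\{c:\abs{c}_{\Gamma_0}\le R\}$.

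Your Step 3 also fills a point the paper passes over. The paper's reduction (``it suffices to count conjugates $hgh^{-1}$'') tacitly needs the map from period-$n$ points to group elements to be uniformly bounded-to-one. Your injectivity argument, via $\pi_H$ and accessibility of vertices, supplies exactly this, with fibres of size at most $\abs{V}$.

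The paper's route is closer to the geometric picture of counting lifts of a closed geodesic through a fixed ball. The price is the extra comparison between $n$ and $l(\overline{\gamma})$ and the implicit bounded-to-one step.

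A minor point: rename your hyperbolicity constant, since $\delta$ already denotes the critical exponent in the paper.
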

\begin{proof}
    We first show that the axis associated to the group element represented by a periodic point in $\Sigma_D$ lies within a bounded distance of $x_0$.
    Let $\mathfrak{C}$ be as in the statement of the lemma and assume there is a corresponding orbit $\{\sigma(y),\sigma^2(y),\ldots, \sigma^n(y)=y\}$ as in the statement of the lemma. Define  $g_j=\overline{\lambda(v_{j\text{ mod }n},v_{j+1 \text{ mod }n})}$. Define the path 
    \[\gamma: \mathbb{Z}\to \Gamma: p\mapsto \begin{cases}
        \prod_{j=1}^{p}g_j & \text{ if } p\geq 1\\
        e & \text{ if } p=0\\
        \left(\prod_{j=1}^{-p}g_j\right)^{-1} & \text{ if } p\leq -1.
    \end{cases}\]
    We note that this path is geodesic by Lemma \ref{lemma: all words are geodesic} and the fact that the set of generators is symmetric. We thus have that $\gamma$ is a geodesic connecting two points $\xi,\eta \in \partial \Gamma$ and passing through $e$. By quasi-isometry of the map $F:\Gamma \to X \ni h\mapsto h\cdot x_0$, we have that the map $F\circ \gamma$ is a quasi-geodesic, where we recall that a quasi-geodesic is a quasi-isometric map with domain $\mathbb{Z}$ or $\mathbb{R}$. If we denote $g=g_1g_2\cdots g_n$, we see that since quasi-geodesics which share the same endpoints remain within a uniformly bounded distance of each other (see Chapitre 5 in \cite{Ghys90}), and the axis $\mathrm{ax}(g)$ of $g$ has the same endpoints as the quasi-geodesic $F\circ \gamma$, we have that $\mathrm{ax}(g)$ passes within a bounded distance of $x_0=F\circ \gamma(0)$.
    
    We now prove that for a closed geodesic $\overline{\gamma}$ in $X/\Gamma$ with length $l(\overline{\gamma})$, or equivalently its associated conjugacy class $\mathfrak{C}$, there is a constant $C>0$ such that the number of periodic points in $\Sigma_D$ corresponding to elements in $\mathfrak{C}$ as above is less than $Cl(\overline{\gamma})$. It suffices to pick $g\in \mathfrak{C}$ and prove that for some $W>0$, \[\# \{hgh^{-1}:h\in \Gamma \text{ and } d(h\cdot\mathrm{ax}(g),x_0)\leq L\}\leq  Wl(\overline{\gamma})\]
    uniformly in $\overline{\gamma}$.
    If $\beta$ is a geodesic segment of $\mathrm{ax}(g)$ of length $l(\overline{\gamma})$, then 
    \[\# \{hgh^{-1}:h\in \Gamma \text{ and } d(h\cdot \mathrm{ax}(g),x_0)\leq L\} \leq \#\{h\in \Gamma: h\beta \cap \overline{B}_L(x_0)\neq \emptyset \},\]
    where $\overline{B}_L(x_0):=\{x\in X:d(x_0,x)\leq L\}$. It thus suffices to show that there is some $D>0$ such that any geodesic segment of length $1$ in $X$ has at most $D$ intersections with the set $\{g\cdot\overline{B}_L(x_0):g\in \Gamma\}$. But this follows immediately from the fact that the action of $\Gamma$ on $X$ is properly discontinuous and that $\overline{B}_L(x_0)$ is compact. 

    To finish the proof, it suffices to note that boundedness of the roof function implies that there is some $\epsilon$ such that a periodic orbit of the form \eqref{equation:periodic orbit} associated with $\mathfrak{C}$ satisfies $n>\epsilon l(\overline{\gamma})$. We may therefore take $N>C/\epsilon$.
\end{proof}
Suppose now that $1\in \mathrm{spec}(\mathcal{L}_{a+it})$ for some $t\neq 0$. By \eqref{equation: decomposing the spectrum}, we have that there is some irreducible maximal component $V_j$ such that $1\in \mathrm{spec} (\hat{\mathcal{L}}_{a+it,j})$. By Lemma \ref{lemma: lattice condition}, we have that \[\left\{\rstr{r^l}{\Sigma_j}(x):l\in\mathbb{N} \text{ and } \sigma^l(x)=x\right\}\subset b\mathbb{Z} \] for some $b>0$. It is immediately clear that every closed orbit of $\sigma$ in $\Sigma_j$ corresponds to a closed orbit in $\Sigma$, hence we obtain by Lemma \ref{lemma: period growth is exponential} and Lemma \ref{lemma: finite-to-one} that there is some $\epsilon>0$ for which the number of geodesics\footnote{These geodesics may not necessarily be prime, but that is irrelevant to the argument.} with lengths in $b\mathbb{Z}\cap[0,T]$ exceeds $\epsilon e^{\delta T}/T$ for infinitely many $T$ large enough. In particular this means there are infinitely many $k\in \mathbb{N}$ for which the number of geodesics with length $kb$ is greater than or equal to \[\epsilon\left( \frac{e^{\delta k b}}{k\delta b}-\frac{e^{\delta(k-1)b}}{(k-1)\delta b}\right).\] Hence there are infinitely many $k$ such that the number of geodesics of length $kb$ is greater than $\frac{\epsilon}{2}(e^{\delta b}-1)e^{\delta k b}/{\delta k b}$, which contradicts \eqref{equation: Roblins orbit counting}. 

We have thus shown that $1\notin \mathrm{spec}(\mathcal{L}_{a+it})$ for all  $t\neq 0$. Hence $\eta_u$ has no poles on the line $s=\delta$, and Proposition \ref{proposition: counting representatives of cosets} now follows straightforwardly. 
\begin{proof}[Proof of Proposition \ref{proposition: counting representatives of cosets}]
    By the previous discussion, we have that $\eta_{u}$ is of the form 
    \begin{equation}
        \eta_{u}(s)= \frac{C_{u}'}{s-a}+Q'(s),
    \end{equation}
    where $Q'(s)$ is holomorphic in some neighbourhood of $\Re(s)\geq a$. Suppose there exists at least one path from $u$ passing through a maximal component. Then we know by Proposition \ref{proposition: form of Cuv} that $C_{u}'>0$. Recalling the definition of $\eta_{u}$ as a Laplace transform, \eqref{equation: Laplace transform} the Wiener-Ikehara Tauberian theorem (see Theorem 5.1 in \cite{Korevaar}) states that 
\[e^{-\delta T}N^{u}(T)\sim \frac{C_{u}'}{a} \]
as $T\to +\infty$. The proposition follows after taking $C_u:= \frac{C_{u}'}{a}$.
\end{proof}

\section{Proving Theorem 1.1}
The purpose of this section is to introduce two more results which will allow us to derive Theorem \ref{Hypothesis:First Step} from Proposition \ref{proposition: counting representatives of cosets}. It is a standard fact that the centraliser $Z(g)$ of a group element $g\in \Gamma$ is a quasiconvex subgroup of $\Gamma$ (see \cite{Bridson99} p. 477), hence we have by Theorem \ref{theorem: Redfern} that there exists a Redfern coding $(\mathcal{G},*,\lambda)$ associated with $Z(g)$. We assume furthermore that $\mathrm{Cl}(g)$ is not finite, i.e. that $Z(g)$ is of infinite index in $\Gamma$, which is equivalent to the condition that $\partial Z(g)\neq \partial\Gamma$, see \S 5.1 in \cite{Champetier00}. Proposition \ref{Proposition: Estimating Lengths} uses Lemma \ref{definition: strong hyperbolicity} to compare the displacement $d\left(\overline{\lambda(x)}^{-1}g\overline{\lambda(x)}\cdot x_0,x_0\right)$ for some $x\in \mathfrak{L}(\mathcal{G})$ with the displacement $L_x$. Lemma \ref{Lemma: Convergence of the Sum} is then a summability result allowing us to deduce Theorem \ref{Hypothesis:First Step} from the aforementioned proposition. 

We start with the following proposition that compares the displacement of representatives of the right cosets of the centraliser with those of corresponding elements in the conjugacy class. For a word $u\in \mathfrak{L}(\mathcal{G})$, define $\mathcal{J}(u)=\bigcup_{k=\abs{u}}^\infty J_k(u)$. We have the following proposition. 
\begin{proposition}\label{Proposition: Estimating Lengths}
There exist constants $K>0$ and $0<\rho< 1$ such that for any $l\in\mathbb{N}$ and any word $u\in \mathfrak{L}(\mathcal{G})$  with $\abs{u}=l$, there exists a uniformly bounded real number $\tau(u)$ such that for any $x\in \mathcal{J}(u)$ we have that
\begin{equation}\label{Equation: Estimating Lengths}
    \left|d\left(\overline{\lambda(x)}^{-1}g\overline{\lambda(x)}\cdot x_0 ,x_0\right)-2L_x-\tau(u)\right|\leq K\rho^l,
\end{equation}
with \[\tau(u)=d\left(g\cdot x_0,x_0\right)-2\left\langle \overline{\lambda(u)}\cdot x_0,g\overline{\lambda(u)}\cdot x_0\right\rangle_{g\cdot x_0}-2\left\langle g\cdot x_0, \overline{\lambda(u)}\cdot x_0\right\rangle_{x_0}.\]
\end{proposition}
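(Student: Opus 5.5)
Write $a=\overline{\lambda(x)}$ and $b=\overline{\lambda(u)}$, so that $a=bc$ where $c$ is the element read along the tail of $x$ after $u$. Since isometries preserve distances, $d(a^{-1}ga\cdot x_0,x_0)=d(ga\cdot x_0,a\cdot x_0)$. The plan is to expand this quantity by Gromov products based at $x_0$ and at $g\cdot x_0$, and to compare it with the same expression in which $a$ is replaced by $b$. The exact algebraic identity I will use is, for any point $y=a\cdot x_0$,
\[
d(g y, y)= d(gy,gx_0)+d(gx_0,x_0)+d(x_0,y) - 2\langle y, gy\rangle_{g x_0}' ,
\]
with a suitable choice of basepoint. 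More concretely, I will use the two-step telescoping
\begin{align*}
d(gy,y) &= d(gy,g x_0) + d(g x_0,y) - 2\langle y, g y\rangle_{g x_0},\\
d(gx_0,y) &= d(g x_0,x_0)+d(x_0,y)-2\langle g x_0, y\rangle_{x_0}.
\end{align*}
Since $d(gy,gx_0)=d(y,x_0)=L_x$, this gives exactly
\[
d(a^{-1}ga\cdot x_0,x_0)=2L_x+d(gx_0,x_0)-2\langle a x_0,ga x_0\rangle_{gx_0}-2\langle gx_0,ax_0\rangle_{x_0}.
\]
The expression for $\tau(u)$ is this formula with $a$ replaced by $b$. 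So the whole statement reduces to showing that each of the two Gromov products changes by at most $O(\rho^l)$ when $a$ is replaced by $b$, and that they are uniformly bounded.

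For the first product, $\langle gx_0,ax_0\rangle_{x_0}$ versus $\langle gx_0,bx_0\rangle_{x_0}$, the difference is $\tfrac12\big[(d(x_0,ax_0)-d(gx_0,ax_0))-(d(x_0,bx_0)-d(gx_0,bx_0))\big]$. This is a Busemann-type difference of the kind controlled by Lemma~\ref{definition: strong hyperbolicity}, exactly as in Proposition~\ref{proposition: displacements and the Cayley graph}, but with $a$ there replaced by the fixed element $g$. To apply it I need $\langle ax_0,bx_0\rangle_{x_0}$ large, of order $\lambda^{-1}l - B$. This holds because $\lambda(x)$ extends $\lambda(u)$ and all words are geodesic by Lemma~\ref{lemma: all words are geodesic}, so $(a,b)_e=|b|=l$ or $l-1$, and then \eqref{equation: quasi-invariance} transfers this to $X$. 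I also need $\langle ax_0,gx_0\rangle_{x_0}$ and $\langle bx_0, g x_0\rangle_{x_0}$ to be bounded, which holds since $g$ is fixed. The hypothesis $R\ge R_0$ of Lemma~\ref{definition: strong hyperbolicity} then holds for $l$ large, and we get a bound $Le^{-R}\le K\rho^l$ with $\rho=e^{-1/\lambda}$. Small $l$ is absorbed into $K$, once everything is known to be bounded.

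For the second product, $\langle ax_0,gax_0\rangle_{gx_0}$, translating by $g^{-1}$ turns it into $\langle g^{-1}ax_0, ax_0\rangle_{x_0}$. This is more delicate: it is the main obstacle, since it is where the centraliser coding matters. I must show that this product is uniformly bounded, independently of $x$. Otherwise both $\tau$ and the error could be large; for instance, if $a$ began with a power of $g$, the product would be of order $|a|$. The key input is that $a$ is a minimal-length representative of its coset $Z(g)a$. I will argue as in the proof of Lemma~\ref{lemma: exponential growth rate}: if $\langle g^{-1}ax_0,ax_0\rangle_{x_0}$ were large, then the geodesics from $e$ to $a$ and to $g^{-1}a$ would fellow-travel for a long time. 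By quasiconvexity of $Z(g)$, and since $\langle g\rangle$ has finite index in $Z(g)$ or $g$ is elliptic (in which case $g$ moves every point on a long segment near $x_0$ by a bounded amount), a long initial segment of $a$ would then lie near the axis or fixed set of $g$, that is, near $Z(g)$. Pulling that piece back by an element $h\in Z(g)$ shortens the representative, which contradicts minimality. I will try to carry this out with a thin-triangle argument in the Cayley graph, and then transfer it to $X$ via \eqref{equation: quasi-invariance}.

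Once boundedness is established, the difference between the products for $a$ and $b$ is handled exactly as in the first case. Write it as $\tfrac12$ of a combination of four distances, $d(ax_0,g^{-1}ax_0)$ against $d(bx_0,g^{-1}bx_0)$ together with $|a|$ against $|b|$ terms. Then apply Lemma~\ref{definition: strong hyperbolicity} to the quadruple $(x_0,ax_0,bx_0,g^{-1}\cdot)$, using both $\langle ax_0,bx_0\rangle_{x_0}\gtrsim l$ and $\langle g^{-1}ax_0,g^{-1}bx_0\rangle_{x_0}\gtrsim l$, the latter following from the former up to the bounded displacement $d(x_0, g x_0)$. Combining the two estimates gives \eqref{Equation: Estimating Lengths}. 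The uniform boundedness of $\tau(u)$ follows from the bounds on the two Gromov products.
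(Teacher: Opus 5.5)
Your proposal is correct, and its skeleton is the paper's. You use the same exact identity writing $d(a^{-1}ga\cdot x_0,x_0)$ as $2L_x+d(gx_0,x_0)$ minus twice the two Gromov products. You also use the same two-step replacement $a\to b$ in each product via Lemma \ref{definition: strong hyperbolicity}, fed by $\langle ax_0,bx_0\rangle_{x_0}\ge\lambda^{-1}(l-1)-B$ and its $g$-translate. For the second product this should be written as a chain through the mixed product $\langle g^{-1}ax_0,bx_0\rangle_{x_0}$, exactly as the paper chains through $[\overline{\lambda(x)},g\overline{\lambda(u)}]_g$. Your remark that $\langle gx_0,ax_0\rangle_{x_0}\le d(x_0,gx_0)$ is a simpler justification of that bound than the paper's appeal to the Markov property.

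The genuine difference is the uniform bound on $\langle g^{-1}ax_0,ax_0\rangle_{x_0}$. The paper argues by compactness. A sequence with unbounded product yields a boundary point $\xi$ with $g\xi=\xi$, hence $\xi\in\Lambda(Z(g))$. Quasiconvexity then keeps the coded ray near $Z(g)$, contradicting coset-minimality. You argue at finite scale, which gives an effective constant.

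Your step ``near the axis or fixed set of $g$, that is, near $Z(g)$'' carries the content and should be made explicit. There is a clean way that treats hyperbolic and torsion $g$ uniformly and needs neither quasiconvexity nor finite index of $\langle g\rangle$ in $Z(g)$:
\begin{itemize}
\item Large $(g^{-1}a,a)_e$ gives, by thin triangles, $\abs{a_t^{-1}ga_t}_{\Gamma_0}\le D$ for all prefixes $a_t$ with $t\le (g^{-1}a,a)_e$, where $D$ depends only on $\abs{g}_{\Gamma_0}$ and the hyperbolicity constant.
\item So $a_t^{-1}ga_t$ lies in a finite set, and hence $a_t\in Z(g)c_1\cup\dots\cup Z(g)c_m$ for fixed $c_i$.
\item Writing $a_t=zc_i$ gives $\abs{z^{-1}a}_{\Gamma_0}\le\abs{c_i}_{\Gamma_0}+\abs{a}_{\Gamma_0}-t$. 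This is less than $\abs{a}_{\Gamma_0}$ once $t>\max_i\abs{c_i}_{\Gamma_0}$, contradicting minimality of $a$ in $Z(g)a$.
\end{itemize}

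This care pays off for torsion $g$, which the theorem allows. There the paper's phrase ``$\xi$ is a limit point of the group generated by $g$'' does not apply as stated, since $\langle g\rangle$ is finite. The same finite-conjugator argument, run along the ray, is what actually shows $\xi\in\Lambda(Z(g))$ in that case.
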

\begin{proof}
    We define some notational shorthand. Let $\hat{d}:\Gamma\times \Gamma \to [0,\infty)$ denote the pullback metric $\hat{d}(h,h'):=d(h\cdot x_0,h'\cdot x_0)$, and let $[\cdot,\cdot]_\cdot$ denote the Gromov product with respect to the pullback metric, i.e. $[a,b]_c:=\langle a\cdot x_0,b\cdot x_0 \rangle_{c\cdot x_0}$. We note that Lemma \ref{definition: strong hyperbolicity} is equivalent to the existence of some $L,\varepsilon,A>0$ such that for all $a,b,c,h\in \Gamma$ with 
    \[[ b,c ]_h-[ a,b ]_h:=R\geq A,\]
    we have that
    \begin{equation}\label{equation: exponential convergence of Gromov product at infinity}
        \abs{[a,c]_h - [ a,b]_h}\leq Le^{- R}, 
    \end{equation}
    which we easily derive from Definition \ref{definition: strong hyperbolicity} by expanding the Gromov products and identifying $a\cdot x_0,b\cdot x_0,c\cdot x_0,h\cdot x_0$ with $t,z,x,y$ respectively.
    By definition of the Gromov product, we note that 
    \begin{equation}\label{equation: conjugacy length in terms of gromov products.}
    \begin{split}
                \hat{d}\left(g\overline{\lambda(x)},\overline{\lambda(x)}\right) = &-2\left(\left[ \overline{\lambda(x)},g\overline{\lambda(x)}\right]_{g}+ \left[ g, \overline{\lambda(x)}\right]_e\right) \\ &+ \hat{d}\left(g\overline{\lambda(x)},g\right)+\hat{d}(g,e)+\hat{d}\left(\overline{\lambda(x)},e\right).
    \end{split}
    \end{equation}
    Using the fact that $\hat{d}$ is quasi-isometric to the word metric, we see using \eqref{equation: quasi-invariance} and the fact that $x$ is a geodesic word starting with $u$ that
    \begin{equation*}
        \begin{split}
        \left[ \overline{\lambda(u)},\overline{\lambda(x)}\right]_e\geq \lambda^{-1}\left(\overline{\lambda(u)},\overline{\lambda(x)}\right)_e-B\geq \lambda^{-1}(l-1)-B
        \end{split}
    \end{equation*}
    for some bounded  $\lambda,B>0$. Hence applying \eqref{equation: exponential convergence of Gromov product at infinity} for $a=g$, $b= \overline{\lambda(u)}$ and $c=\overline{\lambda(x)}$ we see that \[\abs{\left[ g, \overline{\lambda(u)}\right]_e-\left[ g, \overline{\lambda(x)}\right]_e}\leq K_1e^{-\epsilon\lambda^{-1}l},\]
    with $K_1$ a constant that can be chosen to be increasing in $[ g, \overline{\lambda(u)}]_e$. Similarly, we can show that \[\left[ g\overline{\lambda(x)},g\overline{\lambda(u)}\right]_g=\left[ \overline{\lambda(x)},\overline{\lambda(u)}\right]_e\geq \lambda^{-1}(l-1)-B\] and hence that 
    \[\abs{\left[ \overline{\lambda(x)},g\overline{\lambda(u)}\right]_g -\left[ \overline{\lambda(x)},g\overline{\lambda(x)}\right]_g}\leq K_2e^{-\epsilon\lambda^{-1}l}\]
    with $K_2$ increasing in $[ \overline{\lambda(x)},g\overline{\lambda(u)}]_g$. In turn, using the estimate \[[ \overline{\lambda(x)},\overline{\lambda(u)}]_g \geq \lambda^{-1}(\overline{\lambda(x)},\overline{\lambda(u)})_g-B,\] we see that
    \[[ \overline{\lambda(x)},\overline{\lambda(u)}]_g\geq \lambda^{-1}(l-1)-B.\] 
    Furthermore $d_{{\Gamma_0}}(g,\overline{\lambda(w)})\geq \abs{w}-1$ for any $w\in \mathfrak{L}(\mathcal{G})$ by the $Z(g)$-Markov property of the Redfern coding. Consequently by another application of \eqref{equation: exponential convergence of Gromov product at infinity}, we see that $[ \overline{\lambda(x)},g\overline{\lambda(u)}]_g$ and hence $[ \overline{\lambda(x)},g\overline{\lambda(x)}]_g$ is exponentially close to $[ \overline{\lambda(u)},g\overline{\lambda(u)}]_g$. Applying our observations on the Gromov products on the right-hand side of \eqref{equation: conjugacy length in terms of gromov products.}, we conclude that 
    \[ \abs{\hat{d}\left(g\overline{\lambda(x)},\overline{\lambda(x)}\right)-2L_x-\tau(u) }\leq Ke^{-\epsilon\lambda^{-1}l},\]
    where $K$ is a constant depending only on $\hat{d}$, $[ \overline{\lambda(u)},g\overline{\lambda(u)}]_g$ and $[ g, \overline{\lambda(u)}]_e$. In fact we see by our proof that $K$ can be bounded as long as these two Gromov products can be bounded.  Bounding $[ g, \overline{\lambda(u)}]_e$ is a straightforward exercise using the $Z(g)$-Markov property (and of course remembering that $g\in Z(g)$). To bound the other Gromov product, we assume by contradiction that there exists a sequence $u_n\in \mathfrak{L}(\mathcal{G})$ such that $[ \overline{\lambda(u_n)},g\overline{\lambda(u_n)}]_g=[ g^{-1}\overline{\lambda(u_n)},\overline{\lambda(u_n)}]_e\to \infty$. By a diagonal argument, we may assume that $u_n\to (v_0,v_1,\ldots) \in \Sigma$ and hence $\overline{\lambda(u_n)}\to \xi$ for some $\xi \in \partial\Gamma$. But the Gromov product tending to $\infty$ implies that $g^{-1}\cdot \xi=\xi$, so $\xi$ is a limit point of the group generated by $g$ and hence of $Z(g)$, which by quasiconvexity of $Z(g)$ in turn implies that $n\mapsto \overline{\lambda(v_0,v_1,\ldots,v_n)}$ is a geodesic ray which stays within a bounded distance of $Z(g)$, contradicting the $Z(g)$-Markov property of the Redfern coding, since
    \[d_{{\Gamma_0}}\left(\overline{\lambda(v_0,v_1,\ldots,v_n)},g\right)\geq \min_{h\in Z(g)}d_{{\Gamma_0}}(\overline{\lambda(v_0,v_1,\ldots,v_n)},h)=n,\]
    where we recall that $\lambda(v_0,v_1,\ldots,v_n)$ has $n$ labelled edges.
\end{proof}

\begin{lemma}\label{Lemma: Convergence of the Sum}
The following limit exists and is finite:
\[\lim_{l\to +\infty} \sum_{\substack{|u|= l\\ u_0= *}}C_{u}e^{-\delta\frac{\tau(u)}{2}} =: C>0,\]
where $\tau$ is the function appearing in Proposition \ref{Proposition: Estimating Lengths}, the constant $\delta$ is the same as in Proposition \ref{proposition: counting representatives of cosets} and we use the convention that $C_{u}=0$ when $\mathcal{J}(u)= \emptyset $.
\end{lemma}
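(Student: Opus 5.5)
The plan is to show that $S_l:=\sum_{|u|=l,\,u_0=*}C_u e^{-\delta\tau(u)/2}$ is a Cauchy sequence by comparing $S_l$ with $S_{l+1}$, and to show that it stays bounded away from $0$ and $\infty$.

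\textbf{Additivity of the constants.} Since $\mathcal{J}(u)$ is the disjoint union of the sets $\mathcal{J}(u')$ over the one-step extensions $u'$ of $u$, together with the single path $u$ itself, we have
\[N^u(T)=\sum_{u'}N^{u'}(T)+O(1),\]
where the sum runs over $u'\in\mathfrak{L}(\mathcal{G})$ with $|u'|=l+1$ and $u'^{\,l-1}_0=u$. Dividing by $e^{\delta T}$ and letting $T\to\infty$, Proposition \ref{proposition: counting representatives of cosets} gives $C_u=\sum_{u'}C_{u'}$. By induction, $\sum_{|u|=l,\,u_0=*}C_u=C_{(*)}$ for every $l$, and $C_{(*)}>0$ because $N_H(T)\geq\varepsilon e^{\delta T}$ by Lemma \ref{lemma: exponential growth rate}.

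\textbf{Stability of $\tau$.} Next I compare $\tau(u')$ with $\tau(u)$ for such extensions $u'$. The estimates in the proof of Proposition \ref{Proposition: Estimating Lengths} apply directly. The Gromov products $[g,\overline{\lambda(u)}]_e$ and $[g,\overline{\lambda(u')}]_e$ differ by at most $K_1e^{-\epsilon\lambda^{-1}l}$, by \eqref{equation: exponential convergence of Gromov product at infinity}. The same holds for $[\overline{\lambda(u)},g\overline{\lambda(u)}]_g$ and $[\overline{\lambda(u')},g\overline{\lambda(u')}]_g$. A cleaner way to see this is to take any $x\in\mathcal{J}(u')\subset\mathcal{J}(u)$ and apply \eqref{Equation: Estimating Lengths} twice, which gives
\[|\tau(u)-\tau(u')|\leq K\rho^l+K\rho^{l+1}\leq 2K\rho^l.\]
This requires $\mathcal{J}(u')\neq\emptyset$; when it is empty we have $C_{u'}=0$ and the term drops out. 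Since $\tau$ is uniformly bounded by Proposition \ref{Proposition: Estimating Lengths}, it follows that
\[\bigl|e^{-\delta\tau(u')/2}-e^{-\delta\tau(u)/2}\bigr|\leq K'\rho^l\]
with $K'$ uniform in $u$.

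\textbf{Cauchy estimate and conclusion.} Combining the two steps,
\[|S_{l+1}-S_l|\leq\sum_{|u|=l}\sum_{u'}C_{u'}\bigl|e^{-\delta\tau(u')/2}-e^{-\delta\tau(u)/2}\bigr|\leq K'\rho^l\sum_{|u|=l}C_u=K'C_{(*)}\rho^l.\]
This bound is summable in $l$, so $S_l$ converges to a finite limit $C$. For positivity, uniform boundedness $|\tau|\leq M$ gives $S_l\geq e^{-\delta M/2}C_{(*)}>0$ for every $l$, hence $C>0$.

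\textbf{Main obstacle.} The step I expect to be the main obstacle is the uniformity of the constants $K$ and $\rho$ in Proposition \ref{Proposition: Estimating Lengths}, together with the uniform bound on $\tau$. The argument needs $K$ independent of $u$, which depends on the boundedness of the two Gromov products established at the end of that proof. Granting that proposition as stated, with $K$ uniform, the rest is bookkeeping. The only other care needed is justifying $C_u=\sum C_{u'}$: this requires the limits defining $C_u$ and $C_{u'}$ to exist individually, which Proposition \ref{proposition: counting representatives of cosets} supplies, including the case $C_{u'}=0$.
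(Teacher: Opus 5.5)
Your proof is correct, but it takes a different route from the paper.

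\textbf{The paper's route.} It starts from the residue formula of Proposition \ref{proposition: form of Cuv}, writing $C_u=\sum_{j\in\mathcal{M}}D_j\,\mu_a^{(j)}([u])$. It then extends $\tau$ to non-terminating sequences through the boundary point $\zeta_x$, and uses continuity of the Gromov product to get pointwise convergence of the cylinder-wise constant functions $F_{j,l}$. Dominated convergence then gives the limit as the explicit integral $C=\sum_{j\in\mathcal{M}}\int_{[*]}D_j e^{-\delta\tau(x)/2}\,d\mu_a^{(j)}(x)$. Positivity comes from the maximal components being reachable from $*$.

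\textbf{Your route.} You never use the spectral form of $C_u$.
\begin{itemize}
\item You obtain the finite additivity $C_u=\sum_{u'}C_{u'}$ directly from the asymptotics of Proposition \ref{proposition: counting representatives of cosets}. In the paper, this additivity is implicit in $\mu_a^{(j)}$ being a measure.
\item You replace the boundary extension of $\tau$ and dominated convergence with the quantitative bound $|\tau(u)-\tau(u')|\le 2K\rho^l$. You get this by applying \eqref{Equation: Estimating Lengths} to a common $x\in\mathcal{J}(u')\subset\mathcal{J}(u)$. This bound already shows that $\tau$ is Cauchy along every infinite path, which is exactly the pointwise convergence the paper extracts from continuity of the Gromov product.
\end{itemize}

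\textbf{What each route buys.} Your argument is more elementary and self-contained. It also yields a geometric rate $|S_l-C|=O(\rho^l)$ and explicit bounds $e^{-\delta M/2}C_{(*)}\le C\le e^{\delta M/2}C_{(*)}$, with positivity coming straight from Lemma \ref{lemma: exponential growth rate}. The paper's route instead identifies $C$ as an integral against the eigenmeasures. Both proofs rest equally on the uniform bound for $\tau$. Yours also needs the uniformity of $K$ and $\rho$ in Proposition \ref{Proposition: Estimating Lengths}, which that proposition asserts.
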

\begin{proof}
By Proposition \ref{proposition: form of Cuv}, if $D_j:=\frac{h_a^{(j)}(\mathbf{0})}{-a\rstr{\frac{d}{dt}P_j(-tr)}{t=a}}$ for $j\in \mathcal{M}$, then
\[\lim_{l\to +\infty} \sum_{\substack{|u|=l\\ u_0= *}}C_{u}e^{-\delta\frac{\tau(u)}{2}} = \lim_{l\to +\infty} \sum_{j\in \mathcal{M}} \sum_{\substack{|u|=l\\ u_0= *}} D_j\mu_a^{(j)}([u])e^{-\delta\frac{\tau(u)}{2}}.\]
Consider for $l\in \mathbb{N}$ and $j\in\mathcal{M}$ the function \[F_{j,l}(x):=\sum_{\substack{|u|=l\\ u_0= *}}  D_j\chi_{[u]}(x)e^{-\delta\frac{\tau(u)}{2}},\]
which we note is constant on cylinders of length $l$. Using the expression for $\tau$ in Proposition \ref{Proposition: Estimating Lengths}, we see that continuity of the Gromov product implies that $\tau$ can be defined on $x=(x_l)_{n=0}^\infty\in\Sigma$ by setting $\zeta_x:=\lim_{n\to \infty}\overline{\lambda((x_0,\ldots,x_n))}$ when $x$ is not a terminating sequence and substituting $\overline{\lambda(u)}$ with $\zeta_x$ in the expression for $\tau(u)$ in the statement of the proposition. Since $F_{j,l}(x)\to D_je^{-\delta\frac{\tau(x)}{2}}$ pointwise on the cylinder $[*]$ as $l\to \infty$ and since the functions $F_{j,l}$ are uniformly bounded by boundedness of $\tau(u)$, we obtain by the dominated convergence theorem that 
\begin{equation*}
\begin{split}
\lim_{l\to\infty}\int_{[*]} F_{j,l}(x)\,d\mu_a^{(j)}(x)&=\lim_{l\to\infty}\sum_{\substack{|u|=l\\ u_0= *}} D_j\mu_a^{(j)}([u])e^{-\delta\frac{\tau(u)}{2}}\\ &=\int_{[*]}  D_je^{-\delta\frac{\tau(x)}{2}}\,d\mu_a^{(j)}(x).
\end{split}
\end{equation*}
Hence
\[\lim_{l\to +\infty} \sum_{\substack{|u|=l\\ u_0= *}}C_{u}e^{-\delta\frac{\tau(u)}{2}} =\sum_{j\in \mathcal{M}}\int_{[*]}  D_je^{-\delta\frac{\tau(x)}{2}}\,d\mu_a^{(j)}(x),\]
which is positive since all maximal components are reachable from the distinguished vertex $*$. 
\end{proof}
\begin{proof}[Proof of Theorem \ref{Hypothesis:First Step}]
For notational simplicity, let 
\[N(T):=\# \{g' \in \mathrm{Cl}(g) \hbox{ : }  d(x_0,g' \cdot x_0)\leq T\}.\]

Let us fix $\mathfrak{k}>0$ and write $\epsilon_l:= K\rho^l$. By the discussion in the introduction and at the start of Section \ref{section: Counting Representatives of Cosets}, we have that every element of $\mathrm{Cl}(g)$ is equal to $\overline{\lambda(x)}^{-1}g\overline{\lambda(x)}$ for a unique $x\in \mathfrak{L}(\mathcal{G})$.  By Proposition \ref{Proposition: Estimating Lengths} we have that
\begin{equation*}
    \begin{split}
        &N^{u}\left(\frac{T-\tau(u)+\epsilon_l}{2}\right) \\
        &\geq \#\left\{x\in J(u):d\left(\overline{\lambda(x)}^{-1}g\overline{\lambda(x)}\cdot x_0,x_0\right)\leq T\right\}.
    \end{split}
\end{equation*}
Hence we see that 
\begin{equation*}
        N(T) \leq \sum_{\substack{|u|= l\\ u_0= *}}N^{u}\left( \frac{T-\tau(u)+\epsilon_l}{2} \right).
\end{equation*}
Using Proposition \ref{proposition: counting representatives of cosets}, we have that there exists some $M(l,\mathfrak{k})\geq 0$ depending on $l$ and $\mathfrak{k}$ such that 
\[N(T)\leq \left((1+\mathfrak{k})e^{\delta\epsilon_l/2} \sum_{\substack{|u| = l\\ u_0=*}} C_{u}e^{-\delta\frac{\tau(u)}{2}}\right)e^{\delta T/2},\]
for $T\geq M(l,\mathfrak{k})$. By Lemma \ref{Lemma: Convergence of the Sum} there exists some $K(\mathfrak{k})\geq0$ depending on $\mathfrak{k}$ such that
\[\sum_{\substack{|u| = l\\ u_0=*}}C_{u}e^{-\delta\frac{\tau(u)}{2}}\leq(1+\mathfrak{k})C\text{ and } \delta\epsilon_l\leq \mathfrak{k}\]
when $l\geq K(\mathfrak{k})$. 
In particular for $T\geq M\left(K(\mathfrak{k}),\mathfrak{k}\right)$, we have that 
\[N(T)\leq \left((1+\mathfrak{k})^2e^{\mathfrak{k}/2} C\right)e^{\delta
T/2}.\]
Arguing analogously, we may obtain the lower bound 
\[N(T)\geq \left((1-\mathfrak{k})^2e^{-\mathfrak{k}/2} C\right)e^{\delta T/2}\]
for sufficiently large values of $T$ and $l$. We obtain Theorem \ref{Hypothesis:First Step} by letting $\mathfrak{k}\to 0$. 

\end{proof}

\appendix
\section{Lexicographical Ordering and Automata}

We can impose a lexicographical ordering $\prec$ on $S^*$ by saying two words $w=a_1\cdots a_k, w'=a_1'\cdots a_{k'}'\in S^*$ of different length satisfy the ordering $w \prec w'$ if $k=\abs{w}<\abs{w'}=k'$. If they have the same length, but $w\neq w'$, then we consider the first value of $i\in \{1,2,\ldots,k'\}$ for which $a_i\neq a_i'$ and let $w\prec w'$ if $a_i< a_i'$ as natural numbers.

Using this lexicographical ordering, we obtain the following choice of representations of $\Gamma$.
\begin{definition}
    Given a group $\Gamma$, a symmetric choice of generators ${\Gamma_0}$, and an ordering $S\mapsto {\Gamma_0}$, we define $\lex$ to be the set of all elements $w\in S^*$ for which the conditions 
    \[ w\neq w'\in S^* \text{ and } \overline{w'}=\overline{w} \]
    imply that $w\prec w'$ with respect to the lexicographical ordering discussed previously in this subsection. 
\end{definition}
We note that the word \enquote{automaton} has several possible equivalent definitions. One possible way of defining it is as a finite directed graph $(\mathcal{G})$ with distinguished vertex $*$ and a labelling $\lambda$ of edges as in Definition \ref{definition: path-labelling}, with the addition of a subset $V_{\mathrm{accept}}\subset V$ of the vertex set being designated \enquote{accept states}. The \textbf{language} $\mathfrak{L}(\mathcal{G})$ of $\mathcal{G}$ is then defined to be the set of all images of paths under the path-labelling map $\lambda$ starting in $*$ and ending at a vertex in $V_{\mathrm{accept}}$. We refer to the first chapter of \cite{Epstein92} to see that this definition is equivalent to other definitions of automata. 

Definition \ref{definition: path-labelling} is then equivalent to all the vertices being accept states, which is equivalent to saying the language is \textbf{prefix closed}, where we again refer to \cite{Epstein92} for the definition. We see that having the $H$-Markov property is then equivalent to the existence of the following:
\begin{itemize}
    \item automaton $\mathcal{G}$ generating a prefix-closed language $\mathfrak{L}(\mathcal{G})$ such that 
    \item $\lambda\left(\mathfrak{L}(\mathcal{G})\right)\subset \lex$, and 
    \item each word in $\mathfrak{L}(\mathcal{G})$ corresponds under the natural map to a unique right coset in $H\backslash\Gamma$.
\end{itemize}
That such an automaton exists follows from the construction in \cite{Redfern93}. This is also explicitly mentioned in \cite{Holt99}.

\section{CAT(-1)-spaces}
One natural way to generalise a convex cocompact action of a group $\Gamma$ acting on a simply connected Riemannian manifold of negative curvature $X$ is to replace $X$ with a $\CAT$ space.

If $\mathbf{\Delta}$ is a triangle with vertices $x,y,z$, we define a \textbf{comparison triangle} to be a triangle $\mathbf{\Delta}'\subset \mathbf{H}$ with vertices $\overline{x},\overline{y},\overline{z}$ such that $d_X(x,y)=d_\mathbf{H}(\overline{x},\overline{y})$, $d_X(y,z)=d_\mathbf{H}(\overline{y},\overline{z})$ and $d_X(z,x)=d_\mathbf{H}(\overline{z},\overline{x})$, where $d_\mathbf{H}$ is the hyperbolic metric on $\mathbf{H}$. We note that comparison triangles are unique up to hyperbolic isometry. There is a unique bijective map $F:\mathbf{\Delta}\to \mathbf{\Delta}'$ which sends $x,y,z$ to $\overline{x},\overline{y},\overline{z}$ respectively and which is an isometry when restricted to any one side. 
\begin{definition}
    A geodesic metric space is a $\CAT$ space if for each triangle $\mathbf{\Delta}$ and any two points $a,b\in \mathbf{\Delta}$, 
    \[d_X(a,b)\leq d_\mathbf{H}(F(a),F(b)),\]
    where $F$ is the map to the comparison triangle as defined above.
\end{definition}
Examples of $\CAT$ spaces include Riemannian manifolds of negative sectional curvature $\leq -1$ and metric trees. The proof that the metric on $\CAT$-spaces satisfies Lemma \ref{definition: strong hyperbolicity} can be found in \cite{NSpak}, although we note that a similar condition from which Hölder continuity of the roof follows can be found in \cite{PolSharp01}.
\printbibliography

\Addresses

\end{document}